\newtheorem{theorem}{Theorem}[section]
\newtheorem{proposition}[theorem]{Proposition}
\newtheorem{corollary}[theorem]{Corollary}
\newtheorem{lemma}[theorem]{Lemma}
\newtheorem{definition}[theorem]{Definition}
\newtheorem{remark}[theorem]{Remark}
\numberwithin{equation}{section} 
\newcommand{\cqfd}{\hfill{\small $\Box$}} 
\newenvironment{proof}[1][]{{\bf Proof #1 : }}{\begin{flushright} 
\cqfd\end{flushright}}
\newcommand{\ubd}{\operatorname{ubd}}
\newcommand{\ad}{\operatorname{ad}}
\newcommand{\gr}{\mathscr{G}}
\newcommand{\go}{\mathscr{G} ^{(0)}}
\newcommand{\hr}{\mathscr{H}}
\newcommand{\ho}{\mathscr{H} ^{(0)}}
\newcommand{\lr}{\mathscr{L}}
\newcommand{\Nb}{\mathscr{N}}
\def\intx{\overset{\:\circ}{X}}
\newcommand\tgt[1]{{}^{T}\kern-1pt #1}
\newcommand\adi[1]{{}^{ad}\kern-1pt #1}
\newcommand{\ci}{C_{c}^{\infty}}
  \def\RR{{\mathrm{R}}}
  \def\CC{{\mathbb{C}}}
  \def\RR{{\mathbb{R}}}
 \def\ZZ{{\mathbb{Z}}}
 \def\cE{{\mathcal{E}}} \def\cF{{\mathcal{F}}}
 \def\cK{{\mathcal{K}}}
 \def\cT{{\mathcal{T}}} \def\cU{{\mathcal{U}}}
\def\cV{{\mathcal{V}}} \def\cW{{\mathcal{W}}} 
\newcommand{\T}{\mathcal{T}_{nc}X} 
\newcommand\FCX{{}^\mathsf{FC}\!X}
\title{A cohomological formula for the Atiyah-Patodi-Singer index on manifolds with boundary}  
\author{P. Carrillo Rouse, J.M. Lescure and B. Monthubert} 
\date{\today} 
\begin{document}

\maketitle 
\begin{center}
{\bf Abstract}
\end{center}
 The main result of this paper is a new Atiyah-Singer type cohomological formula for the index of Fredholm pseudodifferential operators on a manifold with boundary. The non locality of the chosen boundary condition prevents us to apply directly the methods used by Atiyah and Singer in \cite{AS,AS3}. However, by using the $K$-theory of $C^*-$algebras associated to some groupoids, which generalizes the classical $K$-theory of spaces, we are able to understand the computation of the APS index using classic algebraic topology methods ($K$-theory and cohomology).
As in the classic case of Atiyah-Singer (\cite{AS,AS3}), we use an embedding into an euclidean space to express the index as the integral of a true form on a true space, the integral being over a $C^\infty$-manifold called the singular normal bundle associated to the embedding.
Our formula is based on a K-theoretical Atiyah-Patodi-Singer theorem for manifolds with boundary that is inspired by Connes' tangent groupoid approach, it is not a groupoid interpretation of the famous Atiyah-Patodi-Singer index theorem.

\tableofcontents

\let\thefootnote\relax\footnote{2010 Mathematics Subject Classification. Primary 19K56, 58B34. Secondary 58H05, 58J28.}

\section{Introduction}

In the early 60's, Atiyah and Singer gave a positive answer to a problem posed by Gelfand about investigating the relationship between topological and analytical invariants of elliptic (pseudo)differential operators on closed smooth manifolds without boundary, \cite{AS,AS3}. In a series of papers, Atiyah-Singer not only gave a general cohomological formula for the index of an elliptic (pseudo)differential operator on a closed smooth manifold, they also gave several applications and more importantly they opened a entire new way of studying index problems. Since then, index theory has been at the core of interest of several domains in mathematics and mathematical physics.

To be more descriptive, let $M$ be a closed smooth manifold of dimension $n$, let $D$ be a elliptic (pseudo)differential operator with principal symbol $\sigma_D$. The Atiyah-Singer index formula states
\begin{equation}\label{ASindexformula}
ind\, D=(-1)^n\int_{T^*M}ch([\sigma_D])\cT(M)
\end{equation}
where $[\sigma_D]\in K^0(T^*M)$ is the principal symbol class in K-theory, $ch([\sigma_D])\in H^{ev}_{dR}(T^*M)$ its Chern character, $\cT(M)\in H^{ev}_{dR}(M)$ the Todd class of (the complexified) $T^*M$ and $T^*M$ is oriented as an almost complex manifold, following \cite{AS3}.

A fundamental step in order to achieve such a formula was to realize that the map $D\mapsto ind\, D$ is completely encoded by a group morphism
$K^0(T^*M)\longrightarrow \mathbb{Z},$ 
called the analytic index of $M$. That is, if $Ell(M)$ denotes the set of elliptic pseudodifferential operators over $M$, then the following diagram is commutative:
\begin{equation}\label{elldiam}
\xymatrix{
Ell(M)\ar[r]^-{ind} \ar[d]_-{\sigma}& \mathbb{Z}\\
K^0(T^*M)\ar[ru]_-{ind_{a,M}} & ,
}
\end{equation}
where $Ell(M)\stackrel{\sigma}{\longrightarrow}K^0(T^*M)$ is the surjective map that associates to an elliptic operator the class of its principal symbol in $K^0(T^*M)$. The use of K-theory was a breakthrough in the approach by Atiyah-Singer, indeed, they could use its (generalized) cohomological properties to decompose the analytic index morphism in a composition of topologic (and hence computable) morphisms. The idea is as follows. Consider an embedding $M\hookrightarrow \RR^N$ (assume $N$ even for the purpose of this exposition) and the corresponding normal bundle $N(M)$, Atiyah-Singer showed that the analytic index decomposes as the composition of
\begin{itemize}
\item The Thom isomorphism\footnote{The composition (with obvious notations) $T^*M\stackrel{\pi}{\to}M\stackrel{i_0}{\to}N(M)$ is K-oriented.}
$$K^0(T^*M)\stackrel{\mathscr{T}}{\longrightarrow}K^0(N(M))$$ 
followed by
\item the canonical morphism
$$K^0(N(M))\stackrel{j!}{\longrightarrow} K^0(\RR^N)$$
induced from a identification of the normal bundle as an open subset of $\RR^N$, and followed by
\item the Bott isomorphism
$$K^0(\RR^N)\stackrel{B}{\longrightarrow}K^0(\{pt\})\approx \mathbb{Z}.$$
\end{itemize}

In particular, modulo the Thom and Bott isomorphisms, the analytic index is transformed in a very simple shriek map: $K^0(N(M))\stackrel{j!}{\longrightarrow} K^0(\RR^N).$ The formula (\ref{ASindexformula}) is then obtained as an algebraic topology exercise of comparison between K-theory and cohomology, \cite{AS3}. 

For the purposes of the present paper, remark that the formula (\ref{ASindexformula}) can be also written as follows:
\begin{equation}\label{ASindexformulanormal}
ind\, D=\int_{N(M)}ch(\mathscr{T}([\sigma_D])).
\end{equation}

To get into the subject of the present article we need to talk about groupoids. In his book, \cite{Concg} Section II.5, Connes sketches a (conceptually) simple proof of the K-theoretical Atiyah-Singer Index theorem for closed smooth manifolds using tangent groupoid techniques. The idea is the following: let $M$ be a
closed smooth manifold and $G_M=M\times M$ its pair groupoid. For the readers not familiar with groupoids, one can think of the kernel algebra (convolution algebra of the groupoid) for the pair groupoid: that is on the algebra of smooth complex valued functions on $M\times M$ with kernel convolution product.

Consider the tangent groupoid 
$$\tgt{G_M}:=TM\times \{0\}\bigsqcup M\times M\times (0,1]\rightrightarrows M\times [0,1].$$ 
It is nowadays well known that the index morphism provided by this deformation groupoid is precisely the analytic index of Atiyah-Singer, \cite{Concg,MP}. In other words, there is a short exact sequence of $C^*$-algebras
\begin{equation}\label{tangentsuite}
\xymatrix{
0\ar[r]&C^*(M\times M\times (0,1])\ar[r]&C^*(\tgt{G_M})\ar[r]^-{e_0}&C_0(T^*M)\ar[r]&0
}
\end{equation}
and since $C^*(M\times M\times (0,1])$ is contractible, the morphism induced in K-theory by $e_0$ is invertible. The analytic index of $M$ is the morphism
\begin{equation}\label{index}
\xymatrix{
K_0(C^*(TM))\ar[r]^-{(e_0)_{*}^{-1}}&K_0(C^*(\tgt{G_M}))\ar[r]^-{(e_1)_*}&K_0(C^*(M\times M))
=K_0(\mathscr{K}(L^2(M)))\approx \mathbb{Z},
}
\end{equation}
where $e_t$ are the obvious evaluation  morphisms at $t$.  

Actually, all groupoids considered in this work are (at least) continuous family groupoids, so there is a notion of reduced and envelopping  $C^*$-algebra (using half-densities or by picking up a continuous Haar system) and amenable as well, thus the distinction between the reduced and envelopping $C^*$-algebras is not even necessary.

As pointed out by Connes, if the groupoids appearing in this interpretation of the index were Morita equivalent to spaces then we would immediately have a geometric
interpretation of the index. Now, $M\times M$ is Morita equivalent to a point
(hence to a space), but the other fundamental groupoid playing a role in the previous discussion is not, that is, $TM$ is a groupoid whose fibers are the groups $T_xM$, which are  not Morita equivalent (as groupoids) to  spaces. The idea of Connes is to use an appropriate action of the tangent groupoid in some $\mathbb{R}^N$ in order to translate the index (via a Thom isomorphism) in an index associated to a deformation groupoid which will be Morita equivalent to some space.



\vspace{2mm}

{\bf The case of manifolds with boundary}

\vspace{2mm}

In a series of papers \cite{APS1,APS2,APS3}, Atiyah, Patodi and Singer investigated the case of 
non-local elliptic boundary value problems. They showed that under
some boundary conditions (the now so-called APS boundary conditions),
Dirac operators (among a larger class of first order differential operators) become Fredholm on suitable spaces and they computed
the index. To the characteristic form from the closed smooth case they
added a correction term, called the eta invariant, which is determined by an
appropriate restriction of the operator to the boundary : this  is a
spectral invariant, measuring the asymmetry of the
spectrum. However, a cohomological formula expressing
the APS index  as an integration of some characteristic form on the boundary is impossible. Roughly speaking, the non locality of the chosen boundary condition prevents us to express this spectral correction term with a local object, like a density on the boundary. In other words, applying directly the methods of Atiyah-Singer cannot yield the result in the APS index problem. However, we will see that the use of some noncommutative spaces, associated with groupoids, will give us the possibility to understand the computation of the APS index using classic algebraic topology methods ($K$-theory and cohomology).

\vspace{2mm}
In this paper, we will follow Connes' groupoid approach to obtain a
cohomological formula for the index of a fully elliptic
(pseudo)differential operator on a closed manifold with boundary. For
the case of such a manifold, the pair groupoid does not give the same
information as the smooth case. In fact this case of manifolds with boundary becomes more interesting since different boundary conditions can be considered and each of these give different index problems. In this paper we will be interested in the so-called Atiyah-Patodi-Singer boundary condition. For the moment we will not recall what this condition is, in fact we rather describe the groupoid whose pseudodifferential calculus gives rise to the index theory related to such a condition.

Let $X$ be a manifold with boundary. We denote, as usual, $\intx$ the interior which is a smooth manifold and $\partial X$ its boundary. Let 
\begin{equation}
\Gamma(X)\rightrightarrows X
\end{equation}
be the groupoid of the b-calculus, where 
$$\Gamma(X)=\intx\times\intx \bigsqcup \partial X\times \partial X\times \RR,$$ 
with groupoid structure given as a family of pair groupoids and the (additive) group $\RR$. It is a continuous family groupoid with the topology explicity described in \cite{Mont} (see beginning of Section \ref{nctspacesection} below) 

Consider 
$$\Gamma(X)^{tan}=A(\Gamma(X))\bigsqcup \Gamma(X)\times (0,1]\rightrightarrows X\times [0,1]$$ 
its tangent groupoid.

Take now the open subgroupoid of $\Gamma(X)^{tan}$ obtained by restriction to $
X_\cF:=X\times [0,1]\setminus (\partial X\times \{1\})$
\begin{equation}\nonumber
\Gamma(X)_{\cF}=A(\Gamma(X))\times \{0\}\bigsqcup \intx\times\intx\times (0,1]\bigsqcup \partial X\times \partial X\times \RR\times (0,1)\rightrightarrows X_\cF.
\end{equation}

By definition $\intx\times \intx \times (0,1]$ is a saturated, open
dense subgroupoid of $\Gamma(X)_\cF$. This leads to a complementary closed subgroupoid of $\Gamma(X)_\cF$:
\begin{equation}
\mathcal{T}_{nc}X\rightrightarrows X_{\partial},
\end{equation}
where $X_{\partial}:=X_\cF\setminus \intx \times (0,1]=X\underset{\partial X\times\{0\}}{\cup}\partial X\times [0,1)$.

The groupoid $\mathcal{T}_{nc}X$, called here {\it "The noncommutative
  tangent space of $X$"}, was introduced  in \cite{DL05} in the framework of pseudomanifolds with isolated singularities and used for Poincar\'e duality purpose. It was later used again in \cite{DLN} to derive an index theorem and reintroduced in \cite{Mont-Banach} in the framework of manifolds with boundary. Note also that $\mathcal{T}_{nc}X$ is denoted by $T\FCX$ in \cite{DLR} where it is generalized to the case of stratified spaces, or equivalently, manifolds with (iterated) fibred corners. 

Deformation groupoids like $\Gamma(X)_{\cF}$ induce index morphisms. Indeed, its algebra comes 
equipped with a restriction morphism to the algebra of $\mathcal{T}_{nc}X$ and an evaluation morphism to the algebra of $\intx\times \intx$ (for $t=1$). Indeed, we have a short exact sequence of $C^*$-algebras
\begin{equation}\label{btangentsuiteintro}
\xymatrix{
0\ar[r]&C^*(\intx\times \intx\times (0,1])\ar[r]&C^*(\Gamma(X)_{\cF})\ar[r]^-{e_0}&C^*(\mathcal{T}_{nc}X)\ar[r]&0
}
\end{equation}
where the algebra $C^*(\intx\times \intx\times (0,1])$ is contractible. Hence applying the $K$-theory functor to this sequence we obtain an index morphism
\begin{equation}\label{indfintro}
ind_{\cF}=(e_1)_*\circ(e_0)_{*}^{-1}:K_0(C^*(\mathcal{T}_{nc}X))\longrightarrow
K_0(C^*(\intx\times \intx))\approx \mathbb{Z}.
\end{equation}
This index computes indeed the Fredholm index of those elliptic operators on X satisfying the APS boundary condition, and hence we call it {\it The Fredholm index morphism of $X$}. To be more explicit, the statement is the following: 

\begin{proposition}\label{FAPSintro}
For any fully elliptic operator $D$ on $X$, there is a naturally associated "non commutative symbol" $[\sigma_D]\in K_0(C^*(\mathcal{T}_{nc}X))$ and
\begin{equation}
ind_{\cF}([\sigma_D])= Index_{APS}(D),
\end{equation}
where $Index_{APS}(D)$ is the Fredholm index of $D$. Moreover, every element in $K_0(C^*(\mathcal{T}_{nc}X))$ can be realized in this way.
\end{proposition}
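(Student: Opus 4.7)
The plan is to deduce the proposition from the pseudodifferential calculus on the continuous family groupoids $\Gamma(X)$ and $\Gamma(X)_\cF$, together with the identification of fully elliptic $b$-pseudodifferential operators with APS realizations.

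To construct $[\sigma_D]$, observe that the pseudodifferential calculus on $\Gamma(X)$ is (equivalent to) the small $b$-calculus, so ``fully elliptic'' means that the interior principal symbol is invertible outside the zero section of $A^*\Gamma(X)$ and that the indicial family is invertible on $\partial X$. After order reduction, take a zero-order representative of $D$ and lift it to an order-zero pseudodifferential operator $\tilde D$ on $\mathcal{T}_{nc}X$ whose restriction to the Lie algebroid part $A\Gamma(X)\subset \mathcal{T}_{nc}X$ is determined by the interior symbol of $D$, and whose restriction to the boundary cylinder $\partial X \times \partial X \times \RR \times (0,1)$ is prescribed by the indicial family (constant in $t$). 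Through the principal symbol extension
$$
0 \longrightarrow C^*(\mathcal{T}_{nc}X) \longrightarrow \bar\Psi^0(\mathcal{T}_{nc}X) \longrightarrow C(S^*A\mathcal{T}_{nc}X) \longrightarrow 0,
$$
invertibility of the principal symbol of $\tilde D$ produces, via the K-theoretic boundary map, the desired class $[\sigma_D]\in K_0(C^*(\mathcal{T}_{nc}X))$.

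To prove $ind_\cF([\sigma_D])=Index_{APS}(D)$, I would deform $\tilde D$ to $D$ along $\Gamma(X)_\cF$: construct an operator $\widehat D \in \bar\Psi^0(\Gamma(X)_\cF)$ with $e_0(\widehat D)=\tilde D$ and $e_1(\widehat D)=D$. Its associated symbol class lies in $K_0(C^*(\Gamma(X)_\cF))$, is sent by $(e_0)_*$ to $[\sigma_D]$, and is sent by $(e_1)_*$ to the Fredholm index class of $D$ in $K_0(C^*(\intx\times\intx))\simeq\ZZ$. Combined with (\ref{btangentsuiteintro}) and the contractibility of $C^*(\intx\times\intx\times(0,1])$, this gives $ind_\cF([\sigma_D])=\mathrm{ind}(D)$, and the latter integer equals $Index_{APS}(D)$ by the $b$-calculus interpretation of the APS index. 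The surjectivity statement then follows from the same principal symbol exact sequence: every K-theory class on the left arises as the image under the boundary map of an invertible element of $K_1(C(S^*A\mathcal{T}_{nc}X))$, and any such symbol can be quantized into a fully elliptic operator on $\Gamma(X)$.

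The main analytical obstacle is the identification, at $t=1$, of $\mathrm{ind}(D)\in K_0(C^*(\intx\times\intx))\simeq\ZZ$ with $Index_{APS}(D)$: the K-theoretic manipulations above are essentially formal, but verifying that a fully elliptic element of $\Gamma(X)$, evaluated on the appropriate $b$-Sobolev half-densities, realizes an APS boundary value problem with matching Fredholm index is the non-trivial analytical point. I expect this step to bridge the groupoid formalism with the $b$-calculus approach to APS and to carry the analytic weight of the proposition.
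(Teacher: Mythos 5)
There is a genuine gap, and it sits at the very first step: your definition of $[\sigma_D]$ as the image of the principal symbol of $\tilde D$ under the boundary map of
$0 \to C^*(\mathcal{T}_{nc}X) \to \bar\Psi^0(\mathcal{T}_{nc}X) \to C(S^*A\mathcal{T}_{nc}X)\to 0$.
First, this extension is not available in the naive form you use: the unit space $X_\partial=X\cup_{\partial X}\partial X\times[0,1)$ is \emph{not} compact (there is a missing face at $\varepsilon\to 1$), so an operator whose principal symbol is invertible is only invertible modulo (the closure of) order $-1$ operators, and such remainders need not lie in $C^*(\mathcal{T}_{nc}X)$: elements of $C^*(\mathcal{T}_{nc}X)$ must vanish in norm as $\varepsilon\to 1$, while a symbolic parametrix remainder, constant in $\varepsilon$ on the cylinder part $\partial X\times\partial X\times\RR\times(0,1)$, does not. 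One needs the actual invertibility of the indicial family to replace the symbolic parametrix by a true inverse near $\varepsilon=1$; this is exactly what the paper does, both in the bounded picture (the full parametrix condition $P_\partial Q_\partial=1=Q_\partial P_\partial$, not just invertibility modulo smoothing) and in the unbounded one (the parametrix $q$ chosen to be the exact inverse for $1-\alpha<\varepsilon<1$ and $0$ at $\varepsilon=1$). Second, and more fatally, even if your boundary-map class were defined, it would depend only on the principal symbol data of $D$ (interior symbol plus the symbol of the indicial family). The APS index does not: two fully elliptic operators with identical principal symbols can differ by a zero-order perturbation of the boundary operator producing spectral flow, hence different Fredholm indices. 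So with your definition the identity $ind_{\cF}([\sigma_D])=Index_{APS}(D)$ would be false in general. The same defect undermines your surjectivity argument: an invertible principal symbol over $S^*A\mathcal{T}_{nc}X$ cannot in general be quantized to a \emph{fully} elliptic operator (invertibility of the indicial family is an extra condition, with a genuine obstruction), so "every class comes from the boundary map of an invertible symbol" does not deliver the statement.

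The paper's construction avoids all of this by never passing through the principal symbol alone: $[\sigma_D]$ is defined as the class of a Kasparov $(\CC,C^*(\mathcal{T}_{nc}X))$-module obtained by lifting $P$ and a \emph{full} parametrix $Q$ to $\Gamma(X)^{tan}$ and restricting to $\mathcal{T}_{nc}X$; the exact boundary inverse is what makes the remainders compact on $\Gamma(X)_{\cF}$ and on $\mathcal{T}_{nc}X$, and the class is identified with the Poincar\'e dual of the $K$-homology class $[P]\in K_0(X^c)$ of the cone space $X^c=X/\partial X$ (this Poincar\'e duality, from \cite{DL09,Les09,DLR}, is also what gives the "every class is realized" statement, in place of your quantization argument). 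Your second half — lifting over $\Gamma(X)_{\cF}$, using contractibility of $C^*(\intx\times\intx\times(0,1])$, evaluating at $t=0,1$ and invoking the Morita equivalence with a point — is indeed the same strategy as the paper, but note that you explicitly defer the $t=1$ identification with $Index_{APS}(D)$; in the paper this is precisely where the $L^2_b$-Fredholm theory of fully elliptic $b$-operators and the identification of the evaluated Kasparov module with the Fredholm module of $P$ carry the analytic content.
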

Previous related results appeared in \cite{LMNpdo} for differential operators and using different algebras to classify their symbols, and in \cite{Savin2005} where different techniques were employed. Proposition \ref{FAPSintro} was proved in \cite{Les09} and the corresponding statement for general manifolds with fibred corners was treated in \cite{DLR}.

A first task in order to follow the Atiyah-Singer approach would be to
compute the morphism $ind_{\cF}$ by topological means. For instance, using
an appropriate embedding into a space in which the computation could
follow in an easier way. This idea has been already followed up in \cite{DLN} in the framework of manifolds with conical singularities, using a $KK$-equivalent version of the noncommutative tangent space $\mathcal{T}_{nc}X$. There, the authors use embeddings into euclidean spaces to extend the construction of the Atiyah-Singer topological index map, thanks to a ``Thom isomorphism`` between the noncommutative tangent space of the singular manifold and of its singular normal bundle, and then get an index theorem in the framework of $K$-theory. Here, we follow a different approach and we are going to extend  the Atiyah-Singer topological index map using Connes' ideas on tangent groupoid actions on euclidean spaces; moreover we investigate the cohomological counterpart of the $K$-theoretic statement of the index theorem. Note also that the index map considered here coincide, through $KK$-equivalences, with the index maps considered in \cite{LMNpdo} and in \cite{DLN}.

We start in Section \ref{sectionembedding} by considering an appropriate embedding ({\it i.e.}, respecting the boundary)
\begin{equation}\label{embeddingintro}
i:X\hookrightarrow \mathbb{R}^{N-1}\times \mathbb{R}_+
\end{equation}
of $X$ into $\mathbb{R}^{N-1}\times \mathbb{R}_+$. Following Connes, we use it to define a continuous family groupoid morphism (see Section \ref{hdefinition} for the explicit definition)
\begin{equation}
h:\Gamma(X)\to \RR^N
\end{equation}
where we see $\RR^N$ as an additive group and we assume $N$ even. This morphism induces an action of $\Gamma(X)$ on $X\times \RR^N$ and an induced deformation action of $\Gamma(X)_\cF$ on $X_\cF\times \RR^N$ (coming from an induced morphism $h_\cF$). The main task in Section \ref{sectionembedding} is to prove the following result (proposition \ref{hcontinue} below):

\begin{proposition}\label{hcontinueintro}
The semi-direct groupoid $(\Gamma(X)_{\cF})_{h_\cF}:=\Gamma(X)_\cF\rtimes_{h_\cF} \RR^N$ is a free proper groupoid.
\end{proposition}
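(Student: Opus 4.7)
The plan is a stratum-by-stratum analysis on
$\Gamma(X)_\cF = A(\Gamma(X))\times\{0\} \sqcup \intx\times\intx\times(0,1] \sqcup \partial X\times\partial X\times\RR\times(0,1)$,
using the groupoid cocycle $h:\Gamma(X)\to\RR^N$ built from the embedding $i$ (with interior pair-groupoid piece $(x,y)\mapsto i(x)-i(y)$ and an explicit formula on the boundary cylinder producing an additive cocycle to $\RR^N$ compatible with the b-structure) together with the Connes-type rescaling $h_\cF(\gamma,t)=t^{-1}h(\gamma)$ for $t>0$, extending continuously at $t=0$ to the algebroid differential $Dh$. Freeness is the assertion that every isotropy arrow of the semi-direct groupoid is a unit; properness is the assertion that the anchor map $(r,s,h_\cF):\Gamma(X)_\cF\to X_\cF\times X_\cF\times\RR^N$ is proper.

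For freeness, an isotropy element $(\gamma,v)$ requires simultaneously $r(\gamma)=s(\gamma)$ and $h_\cF(\gamma)=0$. On the interior stratum the pair groupoid itself has trivial isotropy, so the claim is automatic. On the boundary cylinder $r(\gamma)=s(\gamma)$ forces $\gamma=(x,x,\lambda,t)$, and the explicit formula for $h_\cF$ on this stratum contributes a component proportional to $\lambda/t$; the vanishing condition then forces $\lambda=0$, so $\gamma$ is a unit. On the algebroid stratum every element is isotropy of $\Gamma(X)_\cF$, so freeness reduces to fibrewise injectivity of $Dh$ on $A(\Gamma(X))\cong {}^bTX$: at an interior point $x\in\intx$ this is $Di_x$, injective because $i$ is an embedding; at a boundary point $x\in\partial X$ the fibre ${}^bT_xX\cong T_x\partial X\oplus\RR$ is sent injectively because $i|_{\partial X}$ is already an embedding into $\RR^{N-1}\times\{0\}$ and the normal $\RR$-summand is recorded by the additional $\RR$-component of $Dh$.

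For properness, take a sequence $(\gamma_n)$ whose image $(r(\gamma_n),s(\gamma_n),h_\cF(\gamma_n))$ converges in $X_\cF\times X_\cF\times\RR^N$. After extracting subsequences lying in a single stratum, on $\intx\times\intx\times(0,1]$ the case $t_n\to t_\infty>0$ is immediate from convergence of $r$ and $s$; the case $t_n\to 0$, combined with boundedness of the difference quotient $(i(x_n)-i(y_n))/t_n$ and with $x_n,y_n\to x_0$, forces the difference quotient to converge to an element of the algebroid, and this is precisely the kind of limit captured by the tangent-groupoid topology on $\Gamma(X)_\cF$. The boundary cylinder is treated in the same way using the product form of $h_\cF$ there, and on the algebroid stratum the map $Dh$ is a fibrewise injective linear map, hence proper over any compact base. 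The main obstacle, and the step requiring the most care, is coherently gluing these limiting arguments across the stratum transitions — interior-to-boundary-cylinder and $t>0$-to-$t=0$ — which is exactly what the continuous family groupoid topology from \cite{Mont} and the tangent-groupoid construction of $h_\cF$ are designed to handle; the key point is that the degenerations of $h_\cF$ along these transitions land injectively in $Dh$, preventing any escape to infinity.
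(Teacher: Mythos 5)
Your freeness argument is sound and is essentially the paper's: the paper observes that a semi-direct groupoid $\gr_h$ has trivial isotropy as soon as $h$ is a monomorphism, and your stratum-by-stratum check (trivial isotropy of the interior pair groupoid, the $\lambda/t$ component on the boundary cylinder, fibrewise injectivity of $A(h)$ via $d i^+$ and $(d i_\partial,\mathrm{id})$) verifies exactly that. Reformulating properness as properness of the map $(r,s,h_\cF):\Gamma(X)_\cF\to X_\cF\times X_\cF\times\RR^N$ is also legitimate and, for these locally compact groupoids, equivalent to the criterion actually used in the paper (closedness of $(r,s)$ plus quasi-compact stabilizers, from \cite{Tu04}).

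The gap is that your properness argument is merely asserted at exactly the points where the work lies. First, on $\intx\times\intx\times(0,1]$ the case $t_n\to t_\infty>0$ is \emph{not} immediate from convergence of $r$ and $s$: if $x_n,y_n$ tend to points of $\partial X$, convergence of sources and ranges says nothing about the $\RR$-coordinate $\log(\rho(x_n)/\rho(y_n))$ of a limiting arrow, and the mixed case ($x\in\intx$, $y\in\partial X$) must be excluded; both require the convergence of $h_\cF(\gamma_n)$, whose last component is $t_n^{-1}\log(\rho(x_n)/\rho(y_n))$ --- this is precisely the four-case analysis of the paper's first lemma in Section \ref{sectionembedding}. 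Second, for the $t_n\to 0$ transitions you invoke boundedness of $(i(x_n)-i(y_n))/t_n$, but the relevant quantity is $h(\gamma_n)/t_n$, built from $i^+=(i_\partial,\log\rho)$ rather than from $i=(i_\partial,\rho)$; near the boundary these are not interchangeable. More importantly, ``boundedness forces the difference quotient to converge to an element of the algebroid, and this is the kind of limit captured by the tangent-groupoid topology'' is not a proof: one must extract a limit $\xi$ using that $di^+$ and $di_\partial$ are closed embeddings (equivalently, local uniform lower bounds for the fibrewise injective map $A(h)$), and then actually verify that $(\gamma_n,t_n)$ converges to $((x,\xi),0)$ in the deformation-to-the-normal-cone topology; saying the topology ``is designed to handle'' this is circular, since that compatibility is exactly what has to be checked. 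The paper does this by proving separately that $\Gamma(X)_h$ and $A(\Gamma(X))_{A(h)}$ are free and proper, then establishing the general statement (proposition \ref{tanproperaction}) that these two facts, together with injectivity of $h$ and $A(h)$, force the tangent-groupoid cocycle $h^T$ to yield a free proper groupoid, and finally restricting to the saturated open subset $X_\cF$; your sketch needs this (or an equivalent) argument spelled out before it can count as a proof.
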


In Section \ref{CTsection} we explain how the Connes-Thom isomorphism links the K-theory of a groupoid with the K-theory of a semi-direct groupoid as above. For instance, the new semi-direct groupoid $(\Gamma(X)_{\cF})_{h_\cF}$ defines as well an index morphism and this one is linked with the index (\ref{indfintro}) by a natural isomorphism, the so called Connes-Thom isomorphism, thus giving the following commutative diagram
\begin{equation}\label{CTdiagramintro}
\xymatrix{
K_0(C^*(\cT_{nc}X))\ar@/^3pc/[rr]^-{ind_\cF}\ar[d]_-{\mathscr{CT}}^-{\approx}&K_0(C^*(\Gamma(X)_{\cF}))\ar[d]_-{\mathscr{CT}}^-{\approx}\ar[l]_-{e_0}^-{\approx}\ar[r]^-{e_1}&K_0(C^*(\intx\times\intx))\approx \ZZ\ar[d]_-{\mathscr{CT}}^-{\approx}\\
K_0(C^*((\cT_{nc}X)_{h_0}))\ar@/_3pc/[rr]_-{ind_{h_\cF}}&K_0(C^*((\Gamma(X)_{\cF})_{h_\cF}))
\ar[l]_-{e_0}^-{\approx}\ar[r]^-{e_1}&K_0(C^*((\intx\times\intx)_{h_1}))
}
\end{equation}
where $h_0$ and $h_1$ denote the respective restricted actions of $\cT_{nc}X$ and $\intx\times\intx$ on $\RR^N$.

Now, the proposition above tells us that the orbit space of
$(\Gamma(X)_{\cF})_{h_\cF}$ is a nice space and moreover that this
semi-direct groupoid is Morita equivalent to its  orbit
space. This means that the index morphism $ind_{h_\cF}$ can be
computed, modulo Morita equivalences, as the deformation index morphism of some
space. More precisely, denoting by $\mathscr{B}_{h_\cF}$ the orbit space of $(\Gamma(X)_{\cF})_{h_\cF}$, by $\mathscr{B}_{h_0}$ the orbit space of $(\cT_{nc}X)_{h_0}$ and by $\mathscr{B}_{h_1}$ the orbit space of $(\Gamma(X)_{\cF})_{h_\cF}$ we have an index morphism between K-theory of spaces (topological K-groups, no more $C^*$-algebras if one does not like it!)
\begin{equation}\label{Bdiagramintro}
\xymatrix{
ind_{\mathscr{B}_{h_\cF}}:&K^0(\mathscr{B}_{h_0})&K^0(\mathscr{B}_{h_\cF})
\ar[l]_-{e_0}^-{\approx}\ar[r]^-{e_1}&K^0(\mathscr{B}_{h_1})
}
\end{equation}
from which we would be able to compute the Fredholm index. This is
what we achieve next, indeed, in Section \ref{BAPSsection} where we are able to explicitly identify these orbit spaces. 

In order to describe them we need to introduce a new space, but let
us first motivate this by looking at the situation when $\partial
X=\emptyset$ (following \cite{Concg} II.5). In this case, the orbit space of $(\Gamma(X)_{\cF})_{h_\cF}$ can be identified with the deformation to the normal cone (see appendix \ref{dncappendix} below for the $C^\infty$-structure of such deformations) of the embedding $X\hookrightarrow \RR^N$, that is a $C^\infty$-cobordism between the normal bundle to $X$ in $\RR^N$, $N(X)$, and $\RR^N$ itself:
\begin{equation}\label{BASintro}
\mathscr{B}_{AS}:=N(X)\bigsqcup (0,1]\times\RR^N.
\end{equation}

In this picture we also see the orbit space of $(TX)_{h_0}$ which identifies with $N(X)$ and the orbit space of $(X\times X)_{h_1}$ which identifies with $\RR^N$.

Still, in this boundaryless case ($\partial X=\emptyset$), this space $\mathscr{B}_{AS}$ gives in K-theory a deformation index morphism
$$\xymatrix{
ind_{\mathscr{B}_{AS}}:&K^0(N(X))&K^0(\mathscr{B}_{AS})\ar[l]^-{e_0}_-{\approx}\ar[r]^-{e_1}& K^0(\RR^N)
}$$
which is easily seen to be the shriek map associated to the identification of $N(X)$ as an open subset of $\RR^N$. 

In the boundary case, the normal bundle is not the right space, we
know for instance that the APS index cannot be computed by an
integration over this space due to the non-locality of the APS boundary condition. One has then to compute the orbit spaces, in fact the orbit space of $(\cT_{nc}X)_{h_0}$ identifies (lemma \ref{lemmaqbord}) with the singular normal bundle:
\begin{equation}\label{Nsingintro}
\mathscr{N}_{sing}(X):=N(X)\times \{0\}\bigsqcup \RR^{N-1}\times (0,1)
\end{equation}
which is the $C^\infty$-manifold obtained by gluing the normal bundle $N(X)$\footnote{which is an honest vector bundle over $X$.} associated to the embedding (\ref{embeddingintro}) and
$$D_\partial:=N(\partial X)\times \{0\}\bigsqcup \RR^{N-1}\times (0,1)$$
the deformation to the normal cone associated to the embedding 
$\partial X\hookrightarrow \RR^{N-1}$, along their common boundary (the gluing depending on a choice of a defining function of the boundary of $X$). The orbit space of $(\intx\times\intx)_{h_1}$ is easily identified with $\RR^N$ (lemma \ref{lemmaqint}). 

Finally the orbit space of $(\Gamma(X)_{\cF})_{h_\cF}$ is homeomorphic to a space (Section \ref{BAPSsection}) looking as
\begin{equation}\label{BFintro}
\mathscr{B}_\mathscr{F}:=\mathscr{N}_{sing}(X)\bigsqcup (0,1]\times\RR^N,
\end{equation}
where more precisely we prove the following (proposition \ref{BForiented})
\begin{proposition}
The locally compact space $\mathscr{B}_\mathscr{F}$ admits an oriented  $C^\infty$-manifold with boundary structure of dimension $N+1$. 
\end{proposition}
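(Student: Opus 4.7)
The plan is to recognize $\mathscr{B}_\mathscr{F}$ as a concrete instance of the deformation-to-the-normal-cone (DNC) construction for the embedding $i: X \hookrightarrow \RR^{N-1}\times\RR_+$, adapted to the fact that both source and target are manifolds with boundary. The case $\partial X = \emptyset$ (where $\mathscr{B}_\mathscr{F}$ reduces to $\mathscr{B}_{AS} = N(X)\sqcup (0,1]\times\RR^N$) is classical and will be recalled in Appendix \ref{dncappendix}; what needs to be done here is the corners/boundary version.

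First I would dispose of the pieces where the smooth structure is standard. The piece $(0,1]\times\RR^N$ is already an open subset of $\RR^{N+1}$, contributing $\{1\}\times\RR^N$ to $\partial\mathscr{B}_\mathscr{F}$. Near a point $(x,v,0)$ of $N(X)\times\{0\}$ with $x\in\intx$, I would pick a tubular neighborhood $\psi$ of $\intx$ in $\RR^{N-1}\times\RR_+^{\circ}$ and use the classical DNC chart $(x,v,t)\mapsto (\psi(x,tv),t)$ for $t>0$, extended by the identity at $t=0$. This yields a chart into $\RR^N\times\RR_+$ in which the $t=0$ slice is in $\partial\mathscr{B}_\mathscr{F}$. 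Near a tail point $(y,s)\in\RR^{N-1}\times(0,1)$, I would fix a defining function $\rho$ for $\partial X$ in $X$ (the one used in the definition of $\mathscr{N}_{sing}(X)$) and write down the analogous DNC chart attached to the embedding $\partial X\hookrightarrow\RR^{N-1}$, rescaled via $\rho$ so that $s$ plays the role of the deformation parameter.

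The core of the argument is the chart near a seam point of $N(X)|_{\partial X}$, where $N(X)\times\{0\}$ and $\RR^{N-1}\times(0,1)$ are glued and where four adjacent pieces meet (interior normal bundle, boundary normal bundle, the tail, and the open stratum $(0,1]\times\RR^N$ for small $t$). Here I would construct a single local chart into $\RR^N\times\RR_+$ via a blow-up type change of variables in the $(\rho,t)$-plane, sending the seam to a codimension-one submanifold of the $t=0$ face, and then verify that the transition maps to the charts from the previous paragraph are $C^\infty$ by a direct computation in these coordinates. This is the only non-formal step in the proof. Once the atlas is in place, orientation is handled by propagating the standard orientation of $\RR^{N+1}$ through every DNC chart; the compatibility along the seam reduces to a sign check that follows from the positivity of $\rho$ on $\intx$ and of the last coordinate in $\RR^{N-1}\times\RR_+$.

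The main obstacle is precisely the seam chart: the gluing in $\mathscr{N}_{sing}(X)$ depends on the choice of $\rho$, and the whole construction only produces a $C^\infty$-manifold with boundary because this $\rho$ simultaneously governs the DNC of $\partial X \hookrightarrow \RR^{N-1}$ (giving the tail) and the local structure of $N(X)$ near $\partial X$ (giving the $t=0$ normal bundle piece). Writing down an explicit chart that interpolates between these two pictures, and checking smoothness of its transitions with the Case A and Case B charts, is where the real work lies; the remainder of the proposition is a straightforward bookkeeping of what has been glued.
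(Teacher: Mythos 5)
Your outline follows the same general strategy as the paper -- an explicit atlas (identity chart on $(0,1]\times\RR^N$, a chart gluing the tail $\RR^{N-1}\times(0,1)$ to the open stratum, a scaling chart along $N(X)$), with the orientation obtained by checking the signs of the transition Jacobians -- but it stops exactly where the proof actually is. The seam chart around $N(X)|_{\partial X}$, which you yourself single out as ``the only non-formal step'' and ``where the real work lies'', is never constructed: ``a blow-up type change of variables in the $(\rho,t)$-plane'' is a name, not a chart, and nothing in your text specifies how one coordinate patch is to cover simultaneously the four adjacent regions (interior normal bundle, $N(X)|_{\partial X}$, the tail, and the open stratum for small deformation parameter), nor why its transitions with your other charts are $C^\infty$ and orientation-preserving. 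Since the proposition is precisely the assertion that such a chart exists, the proposal as written has a genuine gap rather than a proof.

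For comparison, here is how the paper resolves that step. One chooses a tubular neighborhood $f:N(X)\to W\subset\RR^N$ of $X$ \emph{adapted to the boundary}, i.e. restricting over $\partial X$ to a tubular neighborhood $f_\partial:N(X)|_{\partial X}\to W_\partial\subset\RR^{N-1}$; one attaches a collar, $W_{sing}=W\cup W_\partial\times[0,1)$, and takes as chart domain $W_{sing}\times[0,1)_t$, with the scaling formulas $(w,t)\mapsto(f(t\cdot f^{-1}(w)),t)$ and, on the collar part, $(w_\partial,s,t)\mapsto(f_\partial((s+t)\cdot f_\partial^{-1}(w_\partial)),s,t)$, the $t=0$ values landing in $\mathscr{N}_{sing}(X)$. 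The shift $s+t$ is the key device you are missing: it is what makes the transition with the tail chart the manifestly smooth map $(w_\partial,s,t)\mapsto(f_\partial((s+t)\cdot f_\partial^{-1}(w_\partial)),t,s)$, and it is what produces positive Jacobians, hence the orientation. Note also that away from the seam the tail is glued to the open stratum as a trivial cylinder (the paper's chart there is just a coordinate swap), so your proposed ``DNC chart of $\partial X\hookrightarrow\RR^{N-1}$ rescaled via $\rho$'' at tail points is more structure than is needed and, absent formulas, cannot be checked against the quotient topology of $\mathscr{B}_\mathscr{F}$. If you supply the seam chart in the above (or an equivalent explicit) form and verify the three transition maps and their signs, your plan becomes essentially the paper's proof; without it, the central claim remains unproved.
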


The last proposition is essential to explicitly compute the index
(\ref{Bdiagramintro}) above once the explicit identifications are performed, indeed $\mathscr{B}_\mathscr{F}$ is an oriented cobordism from $\mathscr{N}_{sing}(X)$ to $\RR^N$, we can hence apply a Stoke's theorem argument to obtain the following (proposition \ref{Stokes}):

\begin{proposition}
The index morphism of the deformation space $\mathscr{B}_\cF$ can be computed by means of the following commutative diagram:
$$
\xymatrix{
K^0(\mathscr{N}_{sing}(X))\ar@/^2pc/[rr]^-{ind_{\mathscr{B}_\mathscr{F}}}\ar[rd]_{\int_{\mathscr{N}_{sing}(X)}ch(\cdot)}&
K^0(\mathscr{B}_\mathscr{F})\ar[l]_-{(e_0)_*}^-{\approx}\ar[r]^-{(e_1)_*}& K^0(\mathbb{R}^N)\ar[dl]^{\int_{\RR^N}ch(\cdot)}\\
&\RR&
}
$$
\end{proposition}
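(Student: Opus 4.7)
The plan is to invoke Stokes' theorem on the oriented $(N+1)$-dimensional cobordism $\mathscr{B}_\mathscr{F}$ supplied by Proposition \ref{BForiented}, whose boundary decomposes as $\mathscr{N}_{sing}(X) \sqcup \RR^N$ with opposite induced orientations on the two components.

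First, I would reinterpret the evaluation maps $(e_0)_*$ and $(e_1)_*$ in topological K-theory as the restriction maps $\iota_0^*$ and $\iota_1^*$ associated to the closed inclusions $\iota_0 : \mathscr{N}_{sing}(X) \hookrightarrow \mathscr{B}_\mathscr{F}$ and $\iota_1 : \RR^N \hookrightarrow \mathscr{B}_\mathscr{F}$ of the two boundary components. Since $(e_0)_*$ is an isomorphism, every $a \in K^0(\mathscr{N}_{sing}(X))$ lifts to some $\widetilde{a} \in K^0(\mathscr{B}_\mathscr{F})$ with $\iota_0^*\widetilde{a} = a$, and by definition $ind_{\mathscr{B}_\mathscr{F}}(a) = \iota_1^* \widetilde{a}$.

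Next, I would represent $\widetilde{a}$ by a compactly supported virtual bundle on $\mathscr{B}_\mathscr{F}$ (viewing $K^0(\mathscr{B}_\mathscr{F})$ as $\widetilde{K}^0$ of the one-point compactification). Chern--Weil theory then provides a closed representative $ch(\widetilde{a}) \in \Omega^{ev}_{c}(\mathscr{B}_\mathscr{F})$, and by naturality the pulled-back forms $\iota_i^* ch(\widetilde{a})$ represent $ch(\iota_i^* \widetilde{a})$ on the boundary components. Stokes' theorem applied to the closed compactly supported form $ch(\widetilde{a})$ on the oriented manifold with boundary $\mathscr{B}_\mathscr{F}$ yields
$$0 = \int_{\mathscr{B}_\mathscr{F}} d\, ch(\widetilde{a}) = \int_{\RR^N} \iota_1^* ch(\widetilde{a}) - \int_{\mathscr{N}_{sing}(X)} \iota_0^* ch(\widetilde{a}),$$
which rearranges precisely to the commutativity of the two triangles in the diagram.

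The main technical obstacle I foresee is the verification that the $C^\infty$-structure on $\mathscr{B}_\mathscr{F}$ near the gluing locus of $\mathscr{N}_{sing}(X)$ (where the normal bundle $N(X)$ meets the deformation $D_\partial$) is compatible with the intrinsic smooth structures on $\mathscr{N}_{sing}(X)$ and on $\RR^N$, so that the pulled-back Chern forms indeed compute the intrinsic Chern characters on the boundary pieces, together with fixing the orientation conventions so that Stokes' formula delivers the sign pattern displayed above. Once these compatibilities are pinned down using the explicit identifications from Section \ref{BAPSsection}, the argument reduces to the standard cobordism invariance of $\int ch(\cdot)$.
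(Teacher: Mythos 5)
Your argument is correct and is essentially the paper's own proof: the paper likewise identifies $(e_0)_*$ and $(e_1)_*$ with the restrictions along the closed boundary inclusions $i_0:\mathscr{N}_{sing}(X)\hookrightarrow\mathscr{B}_{\mathscr{F}}$ and $i_1:\RR^N\hookrightarrow\mathscr{B}_{\mathscr{F}}$, invokes naturality of the Chern character, and concludes by Stokes' theorem on the oriented cobordism with $\partial\mathscr{B}_{\mathscr{F}}=-\mathscr{N}_{sing}(X)\cup\RR^N$ (proposition \ref{BForiented}). The compatibility issues you flag are exactly what proposition \ref{BForiented} and the charts of Section \ref{BAPSsection} settle, so no further work is needed beyond what you describe.
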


Before enouncing the index theorem, we mentioned that there is a Connes-Thom isomorphism and a Morita equivalence
$$K_0(C^*(\mathcal{T}_{nc}X))\stackrel{\mathscr{CT}}{\longrightarrow}K_0(C^*((\mathcal{T}_{nc}X)_{h_0}))\stackrel{Morita}{\longrightarrow}K^0(\mathscr{N}_{sing}(X)).$$
 In Section \ref{CTsection} we develop Connes-Thom using deformation groupoids, this allows us to perform an explicit computation of the above morphism for "noncommutative symbols" $[\sigma_D]\in K_0(C^*(\mathcal{T}_{nc}X))$. 
 
The index theorem is the following:

\begin{theorem}\label{KAPSintro}[K-theoretic APS]
Let $X$ be a manifold with boundary, consider an embedding of $X$ in $\RR^N$ as in \ref{embeddingintro}. The Fredholm index morphism 
$ind_{\cF}:K_0(C^*(\mathcal{T}_{nc}X))\to \mathbb{Z}$ decomposes as the composition of the following three morphisms
\begin{enumerate}
\item A Connes-Thom isomorphism $\mathscr{CT}$:
$$K_0(C^*(\mathcal{T}_{nc}X))\stackrel{\mathscr{CT}}{\longrightarrow}K^0(\mathscr{N}_{sing}(X)),$$
\item The index morphism of the deformation space $\mathscr{B}_\cF$:
$$
\xymatrix{
K^0(\mathscr{N}_{sing}(X))\ar[rr]^-{ind_{\mathscr{B}_\mathscr{F}}}&
& K^0(\mathbb{R}^N)
}
$$
\item the usual Bott periodicity isomorphism:
$$K^0(\mathbb{R}^N)\stackrel{Bott}{\longrightarrow}\mathbb{Z}.$$
\end{enumerate}
In other terms, the following diagram is commutative
$$\xymatrix{
 K_0(C^*(\mathcal{T}_{nc}X)) \ar[d]_-{\mathscr{CT}}^-{\approx} \ar[rrr]^-{ind_{\mathscr{F}}}&&& \mathbb{Z} \\
K^0(\mathscr{N}_{sing}(X)) \ar[rrr]_-{ind_{\mathscr{B}_\mathscr{F}}} &&& K^0(\mathbb{R}^N)\ar[u]_-{Bott}^-{\approx}\\
}$$
\end{theorem}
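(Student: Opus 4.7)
The strategy is to assemble the commutative diagrams already constructed in the paper and observe that the composition along the outer boundary of the resulting grid is exactly the stated decomposition. The proof is therefore a synthesis rather than a new construction.

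First, I would invoke the Connes--Thom commutative diagram (\ref{CTdiagramintro}). By definition, $\operatorname{ind}_\cF = (e_1)_* \circ (e_0)_*^{-1}$ at the level of $C^*(\Gamma(X)_\cF)$, and the top row of that diagram expresses this. The naturality of the Connes--Thom isomorphism with respect to the restriction and evaluation morphisms in the short exact sequence (\ref{btangentsuiteintro}) (and its analogue for the semi-direct groupoid) gives the commutativity of (\ref{CTdiagramintro}), yielding the identification
\[
\operatorname{ind}_\cF = \mathscr{CT}^{-1} \circ \operatorname{ind}_{h_\cF} \circ \mathscr{CT}
\]
after identifying $K_0(C^*(\intx\times \intx)) \approx \mathbb{Z}$ with its image under $\mathscr{CT}$.

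Second, I would use Proposition \ref{hcontinueintro}: since $(\Gamma(X)_\cF)_{h_\cF}$ is free and proper, its $C^*$-algebra is Morita equivalent to $C_0(\mathscr{B}_{h_\cF})$, and the same conclusion holds for $(\cT_{nc}X)_{h_0}$ and $(\intx\times\intx)_{h_1}$ after restricting the action. These Morita equivalences are compatible with the evaluation morphisms $e_0$ and $e_1$, since the latter are induced by the inclusion of closed saturated subgroupoids and descend to continuous maps between the orbit spaces. Applying $K$-theory transports the bottom row of (\ref{CTdiagramintro}) to the space-level diagram (\ref{Bdiagramintro}), which is precisely the definition of $\operatorname{ind}_{\mathscr{B}_{h_\cF}}$. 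Under the identifications $\mathscr{B}_{h_0} \cong \mathscr{N}_{sing}(X)$ (from lemma \ref{lemmaqbord}) and $\mathscr{B}_{h_1} \cong \mathbb{R}^N$ (from lemma \ref{lemmaqint}), this realizes $\operatorname{ind}_{h_\cF}$ as $\operatorname{ind}_{\mathscr{B}_\cF}$ acting between $K^0(\mathscr{N}_{sing}(X))$ and $K^0(\mathbb{R}^N)$.

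Third, I would identify the right-hand vertical composition, namely $\mathbb{Z} \cong K_0(C^*(\intx\times\intx)) \xrightarrow{\mathscr{CT}} K_0(C^*((\intx\times\intx)_{h_1})) \xrightarrow{\text{Morita}} K^0(\mathbb{R}^N)$, with the inverse of the Bott periodicity isomorphism $K^0(\mathbb{R}^N) \xrightarrow{\text{Bott}} \mathbb{Z}$. This is the standard fact that for the pair groupoid of an open manifold, acting on $\mathbb{R}^N$ through any free and proper morphism with orbit space $\mathbb{R}^N$, the composition of the Connes--Thom isomorphism with the Morita equivalence coincides, up to sign accounted for by our choice of $N$ even, with Bott. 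Combining the three steps yields the commutative diagram in the statement.

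The step I expect to require the most care is the compatibility of the Morita equivalences with the evaluation morphisms $e_0, e_1$ of the deformation groupoid, in particular ensuring that the $C^\infty$-structure on $\mathscr{B}_\cF$ furnished by Proposition \ref{BForiented} is compatible with the closed and open embeddings coming from $(\cT_{nc}X)_{h_0}$ at $t=0$ and $(\intx\times\intx)_{h_1}$ at $t=1$. Once this compatibility is in hand, the remaining arguments are purely diagrammatic and rely only on the previously established naturality of $\mathscr{CT}$ and the Morita equivalence for free proper groupoids.
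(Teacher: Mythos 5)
Your proposal is correct and follows essentially the same route as the paper: the paper's proof assembles exactly the three-row diagram (\ref{diagAPS}) combining naturality of the Connes--Thom isomorphism (your step one, i.e.\ diagram (\ref{CTdiagramintro})), the Morita equivalences for the free proper semi-direct groupoids with the orbit-space identifications of lemmas \ref{lemmaqint} and \ref{lemmaqbord} (your step two), and the identification of the right-hand column with the inverse Bott isomorphism (your step three). Your extra attention to the compatibility of the Morita equivalences with the evaluation morphisms is a point the paper dispatches as immediate, but it introduces no divergence in method.
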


As discussed above, the three morphisms of the last theorem are computable, and then, exactly as in the classic Atiyah-Singer theorem the last theorem
allows to conclude that, given  an embedding $i:X\hookrightarrow
\RR^N$ as above,  any fully elliptic operator $D$ on $X$ with "non
commutative symbol" $[\sigma_D]\in K_0(C^*(\mathcal{T}_{nc}X))$ gives rise to the following formula:

\vspace{2mm}

\noindent
{\bf Cohomological formula for the APS index (corollary 
\ref{APScohomologyformula})}

\begin{equation}\label{APScohomologyintro}
Index_{APS}(D)=\int_{\mathscr{N}_{sing}(X)}Ch(\mathscr{CT}([\sigma_{D}]))
\end{equation}
where $\int_{\mathscr{N}_{sing}(X)}$ is the integration with respect to the fundamental class of $\mathscr{N}_{sing}(X)$. In Section \ref{APScohomology} we perform an explicit description for $\mathscr{CT}([\sigma_{D}])\in K^0(\mathscr{N}_{sing}(X))$.

\vspace{1mm}

The manifold $\mathscr{N}_{sing}(X)$ (see (\ref{Nsingintro}) above) already reflects an interior contribution and a boundary contribution. In particular, picking up a differential form $\omega_D$ on $\mathscr{N}_{sing}(X)$ representing $Ch(\mathscr{CT}([\sigma_{D}])$, we obtain:
\begin{equation}\label{splitAPSintro}
Index_{APS}(D)=\int_{\mathscr{N}(X) }\omega_D + \, \int_{D_{\partial}} \omega_D.
\end{equation}
The first integral above involves the restriction of $\omega_D$ to $\mathscr{N}(X)$, which is related to the ordinary principal symbol of $D$. More precisely, the principal symbol $\sigma_{pr}(D)$ of $D$ provides a $K$-theory class of $C^*(A^*(\Gamma(X)))$, that is a compactly supported $K$-theory class of the dual of the Lie algebroid of $\Gamma(X)$ or in other words of the $b$-cotangent bundle ${}^bT^*X$, and by functoriality of both the Chern character and Thom-Connes maps, we have 
 $$ [(\omega_D)|_{\mathscr{N}(X)}]= Ch(\mathscr{CT}([\sigma_{pr}(D)]).$$ 
The second integral can thus be viewed as a correction term, which contains the eta invariant appearing in APS formula and which also depends on the choice of the representative $\omega_D\in Ch(\mathscr{CT}([\sigma_{D}]))$.

\vspace{1mm}

{\bf Further developments:} The same methods as above can be applied for manifolds with corners for which we already count with the appropriate b-groupoids (\cite{Mont}) and the appropriate notion of ellipticity as well, which yields criteria for Fredholmness of operators. The generalization of the formula is not however immediate. Indeed, we need to explicitly compute the orbit spaces.
 
Our approach and results are not only a groupoid interpretation of the Atiyah-Patodi-Singer formula. A serious comparison between both formulas has to be done. For instance, as we mentioned above, there is a relation between the second integral on (\ref{splitAPSintro}) and the so called eta invariant. For deeply understanding this, we need to explicitly describe the Chern character of the $\mathscr{CT}([\sigma_{D}]\in K^0(\mathscr{N}_{sing}(X))$, for which one might need to use the Chern character computations done mainly by Bismut in \cite{Bisinv}. Also, the second integral comes from the part of the $b$-groupoid corresponding to the boundary,
$$\partial X\times \partial X\times \RR\times (0,1),$$
and this groupoid's algebra is related with the suspended algebra of Melrose (\cite{Meleta}), a relation between this integral and the Melrose and Melrose-Nistor (\cite{MelNis}), becomes then very interesting to study.  

\section{Groupoids}
\subsection{Preliminaries}

Let us recall some preliminaries on groupoids:

\begin{definition}
A $\it{groupoid}$ consists of the following data:
two sets $\gr$ and $\go$, and maps
\begin{itemize}
\item[(1)]  $s,r:\gr \rightarrow \go$ 
called the source and range (or target) map respectively,
\item[(2)]  $m:\gr^{(2)}\rightarrow \gr$ called the product map 
(where $\gr^{(2)}=\{ (\gamma,\eta)\in \gr \times \gr : s(\gamma)=r(\eta)\}$),
\end{itemize}
such that there exist two maps, $u:\go \rightarrow \gr$ (the unit map) and 
$i:\gr \rightarrow \gr$ (the inverse map),
which, if we denote $m(\gamma,\eta)=\gamma \cdot \eta$, $u(x)=x$ and 
$i(\gamma)=\gamma^{-1}$, satisfy the following properties: 
\begin{itemize}
\item[(i).]$r(\gamma \cdot \eta) =r(\gamma)$ and $s(\gamma \cdot \eta) =s(\eta)$.
\item[(ii).]$\gamma \cdot (\eta \cdot \delta)=(\gamma \cdot \eta )\cdot \delta$, 
$\forall \gamma,\eta,\delta \in \gr$ when this is possible.
\item[(iii).]$\gamma \cdot x = \gamma$ and $x\cdot \eta =\eta$, $\forall
  \gamma,\eta \in \gr$ with $s(\gamma)=x$ and $r(\eta)=x$.
\item[(iv).]$\gamma \cdot \gamma^{-1} =u(r(\gamma))$ and 
$\gamma^{-1} \cdot \gamma =u(s(\gamma))$, $\forall \gamma \in \gr$.
\end{itemize}
Generally, we denote a groupoid by $\gr \rightrightarrows \go $. A morphism $f$ from
a  groupoid   $\hr \rightrightarrows \ho $  to a groupoid   $\gr \rightrightarrows \go $ is  given
by a map $f$ from $\gr$ to $\hr$ which preserves the groupoid structure, i.e.  $f$ commutes with the source, target, unit, inverse  maps, and respects the groupoid product  in the sense that $f(h_1\cdot h_2) = f (h_1) \cdot f(h_2)$ for any $(h_1, h_2) \in \hr^{(2)}$.

\end{definition}

For $A,B$ subsets of $\go$ we use the notation
$\gr_{A}^{B}$ for the subset 
\[
\{ \gamma \in \gr : s(\gamma) \in A,\, 
r(\gamma)\in B\} .
\]

We will also need the following definition:

\begin{definition}[Saturated subgroupoids]\label{defsaturated}
Let $\gr\rightrightarrows M$ be a groupoid.
\begin{enumerate}
\item A subset $A\subset M$ of the units is said to be saturated by $\gr$ (or only saturated if the context is clear enough) if it is union of orbits of $\gr$.
\item A subgroupoid 
\begin{equation}
\xymatrix{
\gr_1 \ar@<.5ex>[d]_{r\ } \ar@<-.5ex>[d]^{\ s}&\subset&\gr \ar@<.5ex>[d]_{r\ } \ar@<-.5ex>[d]^{\ s}  \\
M_1&\subset&M
}
\end{equation}
is a saturated subgroupoid if its set of units $M_1\subset M$ is saturated by $\gr$.

\end{enumerate}
\end{definition}

A groupoid can be endowed with a structure of topological space, or
manifold, for instance. In the case when $\gr$ and $\go$ are smooth
manifolds, and $s,r,m,u$ are smooth maps (with $s$ and $r$
submmersions), then $\gr$ is a Lie groupoid. In the case of manifolds
with boundary, or with corners, this notion can be generalized to that
of continuous families groupoids (see \cite{Pat}).

 
A strict morphism of locally compact groupoids is a groupoid morphism which is continuous. Locally compact groupoids form a category with  strict  morphisms of groupoids. It is now classical in groupoids theory that the right category to consider is the one in which Morita equivalences correspond precisely to isomorphisms.
For more details about the assertions about generalized morphisms written in this Section, the reader can read \cite{TXL} Section 2.1, or 
\cite{HS,Mr,MM}.

We need to introduce some basic definitions, classical when dealing with principal bundles for groups over spaces. 
  
 We recall first the notion of  groupoid action. Given a l.c. groupoid $\gr \rightrightarrows \go$,  
a right  $\gr$-bundle over a manifold $M$ is  a manifold $P$ such
that:
\begin{itemize}
\item $P$ is endowed with maps  $\pi$ and $q$  as in
\[
\xymatrix{
 P \ar[d]^\pi   \ar[rd]^q&\gr \ar@<.5ex>[d]_{r\ } \ar@<-.5ex>[d]^{\ s}  \\
M &\go
}
\]
\item $P$ is endowed with  a continuous right action $\mu: P
  \times_{(q, r )} \gr \to P$,   such that if we denote  $\mu(p,
  \gamma) = p \gamma$, one has $\pi( p \gamma)  = \pi(p)$ and 
$p( \gamma_1\cdot \gamma_2) =   (p  \gamma_1)  \gamma_2 $ for any 
$(\gamma_1, \gamma_2) \in \gr^{(2)}$.  Here  $P \times_{(q, r )} \gr$  denotes  the fiber
product of $q:P\to \go$ and $r: \gr\to \go$. 
\end{itemize}

 A  $\gr$-bundle $P$ is called 
principal if
\begin{enumerate}
\item[(i)] $\pi$ is a surjective submersion, and
\item[(ii)]  the map  $P\times_{(q, r)} \gr \to P\times_M P$,  $(p, \gamma) \mapsto (p,  p\gamma)$  
is a homeomorphism. 
\end{enumerate}

We can now define generalized morphisms between two Lie groupoids.

\begin{definition}[Generalized morphism]\label{HSmorphisms}
Let $\gr \rightrightarrows \go$ and 
$\hr \rightrightarrows \ho$ be two Lie groupoids. 
A generalized morphism (or Hilsum-Skandalis morphism) from $\gr$ to $\hr$,   
$f:  \xymatrix{\hr \ar@{-->}[r] &  \gr}$, is given by the isomorphism class of a right $\gr-$principal bundle over $\hr$, that is, the isomorphism class of:
\begin{itemize}
\item A right  principal $\gr$-bundle $P_f$  over $\ho$ which is also a left $\hr$-bundle
such that the  two  actions commute, formally denoted by
\[
\xymatrix{
\hr \ar@<.5ex>[d]\ar@<-.5ex>[d]&P_f \ar@{->>}[ld] \ar[rd]&\gr \ar@<.5ex>[d]\ar@<-.5ex>[d]\\
\ho&&\go,
}
\]

\end{itemize}
\end{definition}

First of all, usual morphisms between groupoids are also generalized morphisms. Next, as the word suggests it, generalized morphisms can be composed. Indeed, if $P$ and $P'$ are generalized morphisms from $\hr$ to $\gr$ and from $\gr$ to $\lr$ respectively, then 
$$P\times_{\gr}P':=P\times_{\go}P'/(p,p')\sim (p\cdot \gamma, \gamma^{-1}\cdot p')$$
is a generalized morphism from $\hr$ to $\lr$. The composition is associative and thus we can consider the category $Grpd_{HS}$ with objects l.c. groupoids and morphisms given by generalized morphisms. There is a functor
\begin{equation}\label{grpdhs}
Grpd \longrightarrow Grpd_{HS}
\end{equation}
where $Grpd$ is the  category of groupoids with usual morphisms. 

\begin{definition}[Morita equivalence]
Two groupoids are Morita equivalent if they are isomorphic in $Grpd_{HS}$.
\end{definition}


\subsection{Free proper groupoids}

\begin{definition}[Free proper groupoid]
Let $\hr \rightrightarrows \ho$ be a locally compact groupoid. We will say that it is free and proper if it has trivial isotropy groups and it is proper.
\end{definition}

Given a groupoid $\hr\rightrightarrows \ho$, its orbit space is $\mathscr{O}(\hr):=\ho/\sim$, where 
$x\sim y$ iff there is $\gamma\in \hr$ such that $s(\gamma)=x$ and $r(\gamma)=y$.

The following fact is well known. In particular in can be deduced from propositions 2.11, 2.12 and 2.29 in \cite{Tu04}.

\begin{proposition}\label{OMorita}
If $\hr\rightrightarrows\ho$ is a free proper (Lie) groupoid, then
$\hr$ is Morita equivalent  to the locally compact space (manifold) $\mathscr{O}(\hr)$.
\end{proposition}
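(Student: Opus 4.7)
The plan is to construct a Morita bibundle between $\hr$ and $\mathscr{O}(\hr)$ (the latter viewed as a groupoid consisting only of units). The natural candidate is $P:=\ho$ itself, endowed with the identity map $\id:P\to\ho$ as the anchor for a left $\hr$-action, and with the quotient map $\pi:P\to\mathscr{O}(\hr)$ as the anchor for the (trivial) right action of $\mathscr{O}(\hr)$. The left action is defined, on pairs $(\gamma,p)$ with $s(\gamma)=p$, by $\gamma\cdot p:=r(\gamma)$, and the two actions commute for free.

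Principality of the trivial right $\mathscr{O}(\hr)$-action is automatic, because the left anchor $\id:P\to\ho$ is injective, so $P\times_{\ho}P$ is simply the diagonal of $P$ and the map $P\times_{\mathscr{O}(\hr)}\mathscr{O}(\hr)\to P\times_{\ho}P$ is trivially a homeomorphism. The content of the proposition thus lies in showing that the left $\hr$-action is principal over $\mathscr{O}(\hr)$, which amounts to proving that the characteristic map
$$
\Phi:\hr\longrightarrow \ho\times_{\mathscr{O}(\hr)}\ho,\qquad \gamma\longmapsto (r(\gamma),s(\gamma)),
$$
is a homeomorphism (a diffeomorphism in the Lie case). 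Surjectivity is immediate from the very definition of $\mathscr{O}(\hr)$. Injectivity follows from \emph{freeness}: if $\Phi(\gamma)=\Phi(\gamma')$, then $\gamma'^{-1}\gamma$ lies in the isotropy group of $s(\gamma)$, which is trivial, so $\gamma=\gamma'$. Continuity of $\Phi$ is obvious, and continuity of its inverse uses \emph{properness} of $(r,s):\hr\to\ho\times\ho$: a continuous proper bijection between locally compact Hausdorff spaces is a homeomorphism, and since $\ho\times_{\mathscr{O}(\hr)}\ho$ is closed in $\ho\times\ho$ (as the graph of the orbit equivalence relation, itself closed by properness), we get the desired topological conclusion.

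The main obstacle, and the place where one truly needs the hypotheses in their Lie form, is the smooth upgrade: one must check that $\mathscr{O}(\hr)$ inherits a (necessarily unique) manifold structure for which $\pi$ is a surjective submersion and $\Phi$ is a diffeomorphism. This is a Godement-type statement asserting that the quotient of a manifold by a free proper Lie-groupoid action is smooth — exactly the content extracted from Propositions 2.11, 2.12 and 2.29 of \cite{Tu04}, and the step for which the full strength of free-plus-proper is indispensable. Once this is granted, the bibundle $(P=\ho,\id,\pi)$ realises $\hr$ as Morita equivalent to $\mathscr{O}(\hr)$, which is the assertion of the proposition.
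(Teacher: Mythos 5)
Your proposal is correct and follows essentially the same route as the paper: the Morita bibundle is the unit space $\ho$ with anchors $\id$ and $\pi$, freeness gives injectivity of the characteristic map, properness gives the topological (closedness/Hausdorff) conclusions, and the smooth quotient structure on $\mathscr{O}(\hr)$ is delegated to Propositions 2.11, 2.12 and 2.29 of \cite{Tu04}, exactly as the paper does. The only difference is that you spell out the principality check for $\Phi:\hr\to\ho\times_{\mathscr{O}(\hr)}\ho$, which the paper leaves implicit.
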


In fact the Morita bibundle which gives the Morita equivalence is the unit space $\ho$: 
\begin{equation}
\xymatrix{
\hr \ar@<.5ex>[d]\ar@<-.5ex>[d]&\ho \ar@{->>}[ld]_-{Id} \ar[rd]^-{\pi}&\mathscr{O}(\hr) \ar@<.5ex>[d]\ar@<-.5ex>[d]\\
\ho&&\mathscr{O}(\hr),
}
\end{equation}

It is obvious that $\hr$ acts on its units freely and properly if $\hr$ is free and proper, and for the same reason $\mathscr{O}(\hr)$ is a nice locally compact space (even a manifold if the groupoid is Lie).

In particular there is an invertible Hilsum-Skandalis generalized morphism 
\begin{equation}
\mathscr{O}(\hr)--->\hr,
\end{equation}
that can also be given as a 1-cocycle from $\mathscr{O}(\hr)$ with values in $\hr$.
This point of view will be very useful for us in the sequel.

\subsection{Semi-direct groupoids by homomorphisms on $\RR^N$ and Connes-Thom isomorphism}\label{CTsection}

Let $\gr\rightrightarrows M$ be a locally compact groupoid.

We consider $\RR^N$ as an additive group, hence a one unit groupoid. Suppose we have an homomorphism of groupoids
\begin{equation}
\gr \stackrel{h}{\longrightarrow}\RR^N.
\end{equation}

This gives rise to an action (say, a right one) of $\gr$ on the space $M\times\RR^N$  and thus to a new semi-direct groupoid:
\begin{equation}
\gr_h:= (M\times\RR^N)\rtimes  \gr :\gr \times \RR^N\rightrightarrows M\times \RR^N
\end{equation}
 which has the following structural maps:


\begin{itemize}
\item The source and target maps are given by 
\begin{center}
$s(\gamma,X)=(s(\gamma),X+h(\gamma))$ and $r(\gamma,X)=(r(\gamma),X)$
\end{center}
\item The multiplication is defined on composable arrows by the formula
$$(\gamma,X)\cdot(\eta,X+h(\gamma)):=(\gamma\cdot\eta,X).$$
\end{itemize}
Then it is obviously a groupoid with unit map $u(m,X)=(m,X)$ ($h(m)=0$ since $h$ is an homomorphism), and inverse given by $(\gamma,X)^{-1}=(\gamma^{-1},X+h(\gamma))$ (again since we have a homomorphism, $h(\gamma)+h(\gamma^{-1})=0$).

\begin{remark}
For the trivial homomorphism $h_0=0$, the associated groupoid is just the product groupoid
$$\gr\times \RR^N\rightrightarrows M\times \RR^N.$$
\end{remark}

If $\gr$ is provided with a Haar system, then the semi-direct groupoid $\gr_h$ inherits a natural Haar system such that the $C^*$-algebra $C^*(\gr_h)$ is isomorphic to the crossed product algebra 
$C^*(\gr)\rtimes_h \mathbb{R}^N$ where $\mathbb{R}^N$ acts on $C^*(\gr)$ by automorphisms by the formula: 
$\alpha_X(f)(\gamma)=e^{i\cdot (X\cdot h(\gamma))}f(\gamma)$, $\forall
f\in C_c(\gr)$, (see \cite{Concg}, propostion II.5.7 for details). In particular, in the case $N$ is even, we have a Connes-Thom isomorphism in K-theory (\cite{Concg}, II.C)
\begin{equation}\label{CT}
\xymatrix{
K_0(C^*(\gr))\ar[r]^-{\mathscr{CT}}_-{\approx}&K_0(C^*(\gr_h))
}
\end{equation}
which generalizes the classical Thom isomorphism, and which is natural with respect to morphisms of algebras.

Since we will need to  compute explicitly the morphism induced by the
homomorphism we propose an alternative construction of Connes-Thom
which can be computed in our context. More precisely we want to work directly with the groupoid algebras $C^*(\gr_h)$ without passing through the isomorphism with $C^*(\gr)\rtimes_h \mathbb{R}^N$.

Given the morphism $\gr \stackrel{h}{\longrightarrow}\RR^N$ we consider the product groupoid $\gr\times [0,1]\rightrightarrows M\times [0,1]$  of the groupoid $\gr$ with the {\sl space} $[0,1]$ and we define
$$H:\gr\times [0,1]\longrightarrow \RR^N,$$
the homomorphism given by 
$$H(\gamma,t):=t\cdot h(\gamma).$$

This homomorphism gives a deformation between the trivial homomorphism and  $h$, more precisely we have:
\begin{lemma}
Denote by $\gr_H:=(\gr\times [0,1])_H$ the semi-direct groupoid associated to the homomorphism
$H$. For each $t\in [0,1]$ we denote by $(\gr_H)_t$ the restriction subgroupoid $\gr_H|_{M\times \{t\} \times \RR^N}$. We have the following properties:
\begin{enumerate}
\item $(\gr_H)_0=\gr\times \RR^N$
\item $(\gr_H)_1=\gr_h$
\item $(\gr_H)|_{(0,1]}\approx \gr_h\times (0,1]$ and in particular 
$C^*((\gr_H)|_{(0,1]})\approx C^*(\gr_h\times (0,1])$ is contractible.
\end{enumerate}
\end{lemma}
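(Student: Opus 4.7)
The plan is to verify each of the three items by direct computation with the structural maps of the semi-direct groupoid, the key observation being that $H$ is linear in $t$ so that scalar rescaling in the $\RR^N$-factor trivialises the $t$-dependence of the cocycle.

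For (1), restricting to $t=0$ gives $H(\gamma,0)=0$, so the induced action of $\gr$ on $M\times\{0\}\times \RR^N$ is trivial. By the Remark preceding the statement, the resulting semi-direct groupoid is the product groupoid $\gr\times\RR^N\rightrightarrows M\times \RR^N$. For (2), restricting to $t=1$ gives $H(\gamma,1)=h(\gamma)$, so by the very definition of the semi-direct construction one recovers $\gr_h$.

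For (3), I would write down the explicit rescaling
$$
\Phi: (\gr_H)|_{(0,1]}\longrightarrow \gr_h\times (0,1],\qquad ((\gamma,t),X)\longmapsto ((\gamma, X/t),\, t),
$$
with inverse $((\gamma,Y),t)\mapsto ((\gamma,t), tY)$. Both maps are continuous (smooth in the continuous family sense) because $t>0$ on the domain. One then checks that $\Phi$ intertwines the structural data: for the source,
$$
\Phi\bigl(s_{\gr_H}((\gamma,t),X)\bigr)=\Phi\bigl((s(\gamma),t),X+th(\gamma)\bigr)=\bigl((s(\gamma), X/t+h(\gamma)),t\bigr),
$$
which coincides with $s_{\gr_h\times(0,1]}\bigl((\gamma,X/t),t\bigr)$. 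The range map is preserved trivially since $H$ only affects the source, and for the product,
$$
\Phi\bigl((\gamma,t),X\bigr)\cdot\Phi\bigl((\eta,t),X+th(\gamma)\bigr)=\bigl((\gamma,X/t),t\bigr)\cdot\bigl((\eta, X/t+h(\gamma)),t\bigr)=\bigl((\gamma\eta, X/t),t\bigr),
$$
which equals $\Phi((\gamma\eta,t),X)$. Hence $\Phi$ is an isomorphism of continuous family groupoids. Passing to $C^*$-algebras yields $C^*((\gr_H)|_{(0,1]})\cong C^*(\gr_h\times (0,1])\cong C^*(\gr_h)\otimes C_0((0,1])$, and since $C_0((0,1])$ is the cone on $\CC$ it admits a contracting homotopy of $*$-homomorphisms, which tensored with the identity on $C^*(\gr_h)$ shows that $C^*(\gr_h\times(0,1])$ is contractible.

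The only step that requires any care is the verification that $\Phi$ is a genuine groupoid morphism, and in particular that it intertwines the twisted source map via the rescaling $X\mapsto X/t$; this is where the linearity $H(\gamma,t)=t\cdot h(\gamma)$ is essential, and there is no real obstacle beyond this bookkeeping.
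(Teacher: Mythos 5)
Your proof is correct and follows essentially the same route as the paper: items (1) and (2) are immediate from $H(\gamma,0)=0$ and $H(\gamma,1)=h(\gamma)$, and your rescaling map $\Phi((\gamma,t),X)=((\gamma,X/t),t)$ is exactly the inverse of the explicit isomorphism $\theta(\gamma,X,\epsilon)=(\gamma,\epsilon,\epsilon\cdot X)$ used in the paper. The verification that the structural maps are intertwined and the cone argument for contractibility of $C^*(\gr_h\times(0,1])$ match the paper's (mostly implicit) reasoning.
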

\begin{proof}
The first and second properties are obvious. For the third one can write the explicit isomorphism
\begin{equation}
\xymatrix{
\gr_h\times (0,1] \ar@<.5ex>[d]\ar@<-.5ex>[d]\ar[rr]^-\theta&& (\gr_H)|_{(0,1]}\ar@<.5ex>[d]\ar@<-.5ex>[d]\\
M\times \RR^N\times (0,1]\ar[rr]_{\theta_0}&&M\times (0,1]\times \RR^N,
}
\end{equation}
given by $\theta(\gamma,X,\epsilon)=(\gamma,\epsilon,\epsilon\cdot X)$ and $\theta_0(x,X,\epsilon)=(x,\epsilon,\epsilon\cdot X)$ (the fact that it is a continuous families groupoid morphism is immediate) with explicit inverse given by $\theta^{-1}(\gamma,\epsilon,X)=(\gamma,\frac{X}{\epsilon},\epsilon)$ and $\theta_{0}^{-1}(x,\epsilon,X)=(x, \frac{X}{\epsilon},\epsilon)$
\end{proof}

The last lemma gives rise to a short exact sequence of $C^*$-algebras (\cite{HS,Ren}):
\begin{equation}
0\to C^*(\gr_h\times (0,1])\to C^*(\gr_{H})\stackrel{e_0}{\rightarrow} C^*(\gr\times \RR^N)\to 0,
\end{equation}
where $e_0$ is induced by the evaluation at zero. This defines a deformation index morphism
\begin{equation}
\mathscr{D}_h:K_*(C^*(\gr\times \RR^N))\to K_*(C^*(\gr_h)).
\end{equation}
The natural map $\gr_{H}\to [0,1]$  gives to $\gr_{H}$ the structure of a continuous field of groupoids over $[0,1]$ and if $\gr$ is assumed to be amenable, we get by \cite{LR} that $C^*(\gr_{H})$ is the space of continuous sections of a continuous field of $C^*$-algebras. Then, the deformation index morphism above coincides with the morphism of theorem 3.1 in \cite{ENN93}.

\begin{definition}\label{connes-thom-map}
Let $\gr$ be a groupoid together with a homomorphism $h$ from $\gr$ to $\RR^N$
(with $N$ even).
Consider the morphism in K-theory 
\begin{equation}
K_*(C^*(\gr))\stackrel{\mathscr{CT}_{h}}{\longrightarrow}K_{*}(C^*(\gr_h)),
\end{equation}
given by the composition of the Bott morphism
$$K_*(C^*(\gr))\stackrel{B}{\longrightarrow}K_{*}(C^*(\gr\times \RR^N)),$$
and the deformation index morphism
$$K_{*}(C^*(\gr\times \RR^N))\stackrel{\mathscr{D}_h}{\longrightarrow}K_{*}(C^*(\gr_h)).$$
We will refer to this morphism as the Connes-Thom map associated to $h$.
\end{definition}
In fact, Elliot, Natsume and Nest proved that this morphism coincides with the usual Connes-Thom isomorphism, theorem 5.1 in \cite{ENN93}. We can restate their result in our framework as follows:

\begin{proposition}[Elliot-Natsume-Nest]
Let $(\gr,h)$ be an  amenable continuous family groupoid (or amenable locally compact groupoid with a continuous Haar system) together with a homomorphism on $\RR^N$ ($N$ even). Then the morphism 
$\mathscr{CT}_h:K_*(C^*(\gr))\to K_*(C^*(\gr_h))$ coincides with the Connes-Thom isomorphism. In particular, it satisfies the following properties:
\begin{enumerate}
\item Naturality.
\item If $\gr$ is a space (the groupoid equals its set of units), then $\mathscr{CT}_h$ is the Bott morphism.
\end{enumerate}
\end{proposition}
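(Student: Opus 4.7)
The plan is to reduce the statement to the main theorem of Elliot--Natsume--Nest (Theorem 5.1 of \cite{ENN93}), once the algebraic setup is suitably translated. The classical Connes--Thom isomorphism is constructed at the level of the crossed product $C^*(\gr)\rtimes_h \RR^N$, while $\mathscr{CT}_h$ is constructed at the level of $C^*(\gr_h)$; these two are identified via Connes' Proposition II.5.7 of \cite{Concg} once a Haar system has been fixed. So the content of the proposition is to check that our deformation construction, transported through this identification, matches the one considered in \cite{ENN93}.

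First I would set up the translation by writing out the isomorphism $C^*(\gr_h)\cong C^*(\gr)\rtimes_h\RR^N$ and verifying that for each $t\in[0,1]$ the slice $C^*((\gr_H)_t)$ is carried to the crossed product $C^*(\gr)\rtimes_{th}\RR^N$ for the rescaled action $\alpha^t_X(f)(\gamma)=e^{it(X\cdot h(\gamma))}f(\gamma)$. Amenability of $\gr$ carries over to $\gr_H$, which is a continuous field of groupoids over $[0,1]$; together with \cite{LR} this ensures $C^*(\gr_H)$ is the continuous section algebra of the field $\{C^*(\gr)\rtimes_{th}\RR^N\}_{t\in[0,1]}$, with fibres $C^*(\gr)\otimes C_0(\RR^N)$ at $t=0$ (trivial action) and $C^*(\gr_h)$ at $t=1$. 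This is precisely the deformation used in \cite{ENN93}.

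Once the deformation is identified, the index morphism $\mathscr{D}_h$ coming from the contractibility of $C^*(\gr_h\times(0,1])$ agrees with the deformation map of Theorem 3.1 of \cite{ENN93}, and the composite $\mathscr{CT}_h=\mathscr{D}_h\circ B$ is their alternative construction. Theorem 5.1 of \cite{ENN93} then says this composite coincides with the classical Connes--Thom isomorphism. The two listed properties follow immediately: naturality is inherited from the naturality of Bott, of the deformation $\gr_H\leadsto \gr_H'$ induced by a homomorphism $\gr\to\gr'$ intertwining the $\RR^N$-valued cocycles, and of Connes' crossed product identification; and when $\gr$ equals its unit space $M$, any groupoid morphism $h:M\to\RR^N$ is forced to vanish, so $\gr_H=M\times[0,1]\times\RR^N$ is a trivial deformation, $\mathscr{D}_h$ is the identity on $K_*(C_0(M\times\RR^N))$, and $\mathscr{CT}_h$ reduces tautologically to the Bott morphism.

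The step I expect to be the main obstacle is the careful verification that the continuous family groupoid structure on $\gr_H$, whose semi-direct multiplication depends on the parameter $t$ through $H(\gamma,t)=t\cdot h(\gamma)$, yields exactly the $C^*$-algebraic continuous field whose fibres are $C^*(\gr)\rtimes_{th}\RR^N$. This requires combining Connes' Prop II.5.7 fibrewise with the continuity statement of \cite{LR} and checking that the section-algebra structure on $C^*(\gr_H)$ coming from the groupoid side matches the one used in \cite{ENN93}. Once that is in place, the rest of the argument is a direct invocation of their theorem.
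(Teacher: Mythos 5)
Your proposal is correct and takes essentially the same route as the paper: the paper likewise identifies the groupoid deformation $\gr_H$ with the Elliott--Natsume--Nest continuous field (via the crossed-product isomorphism of Proposition II.5.7 in \cite{Concg} and the amenability/continuous-field argument from \cite{LR}), matches $\mathscr{D}_h$ with the morphism of Theorem 3.1 of \cite{ENN93}, and then simply invokes their Theorem 5.1, stating the proposition without further proof. Your direct verification of the space case (where $h$ must vanish, so $\mathscr{CT}_h$ is tautologically Bott) is a harmless minor addition to what the paper asserts.
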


\section{Noncommutative spaces for manifolds with boundary}

\subsection{The noncommutative tangent space of a manifold with boundary}\label{nctspacesection}
Let $X$ be a manifold with boundary. We denote, as usual, $\intx$ the interior which is a smooth manifold and $\partial X$ its boundary. Let 
\begin{equation}
\Gamma(X)\rightrightarrows X
\end{equation}
be the groupoid of the b-calculus
(\cite{MP,LMNpdo,Mont}). This groupoid has a
pseudodifferential calculus which essentially coincides with Melrose's
$b$-calculus. There is a canonical definition, but in our case we need
to choose a defining function of the boundary, making the definition
simpler. A defining function of the boundary is a smooth function $\rho:X \to
\RR_+$ which is zero on the boundary and only there, and whose
differential is non zero on the boundary. 

\begin{definition}
  The $b$-groupoid of $X$ is 
$$\Gamma(X)=\{ (x,y,\alpha)\in X\times X\times \RR,
\rho(x)=e^\alpha \rho(y)\}.$$
\end{definition}
this implies that 

$$\Gamma(X)=\intx\times\intx \bigsqcup \partial X\times \partial X\times \RR\rightrightarrows X,$$
with groupoid structure compatible with those of $\intx\times\intx$
and $\partial X\times \partial X\times \RR$ ($\RR$ as an additive
group). 
It is a continuous family groupoid, see \cite{Mont,LMNpdo}
for  details. For instance,
$$(x_n,y_n)\to(x,y,\alpha)$$
if and only if
$$x_n\to x,\,y_n\to y \text{ and } log(\frac{\rho(x_n)}{\rho(y_n)})\to \alpha.$$

For such a groupoid it is possible to construct an algebra of
pseudodifferential operators. Although we do not need it in this
article, we recall this background to help the reader relate our work
to usual index theory. See \cite{MP, NWX, Mont, LMNpdo,VassoutJFA} for a
detailed presentation of pseudodifferential calculus on groupoids.

A pseudodifferential operator on a Lie groupoid (or more generally a
continuous family groupoid) $\gr$ is a family of peudodifferential
operators on the fibers of $\gr$ (which are smooth manifolds without
boundary), the family being equivariant under the natural action of $\gr$. 

Compactly supported pseudodifferential operators  form an algebra, denoted by
$\Psi^\infty(\gr)$. The algebra of order 0 pseudodifferential operators
can be completed into a $C^*$-algebra, $\overline{\Psi^0}(\gr)$. There
exists a symbol map, $\sigma$, whose kernel is $C^*(\gr)$. This gives
rise to the following exact sequence:
$$0 \to C^*(\gr) \to \overline{\Psi^0}(\gr) \to C_0(S^*(\gr))$$
where $S^*(\gr)$ is the cosphere bundle of the Lie algebroid of $\gr$.

In the general context of index theory on groupoids, there is an
analytic index which can be defined in two ways. The first way,
classical, is to consider the boundary map of the 6-terms exact
sequence in $K$-theory induced by the short exact sequence above:
$$ind_a: K_1(C_0(S^*(\gr))) \to K_0(C^*(\gr)).$$

Actually, an alternative is to define it through the tangent groupoid
of Connes, which was originally defined for the groupoid of a smooth
manifold and later extended to the case of continuous family groupoids
(\cite{MP,LMNpdo}). In general, 
$$\gr^{tan}=A(\gr)\bigsqcup \gr\times (0,1]\rightrightarrows \gr^{(0)}\times
[0,1],$$ 
with the deformation to the normal cone structure, see definition \ref{defgtan}. In particular the Lie algebroid can be identified with 
$A\gr=Ker\,ds|_{\gr^{(0)}} $, see (\ref{AGds}) for more details.

Using the evaluation maps, one has two $K$-theory morphisms, $e_0: K_*(C^*(\gr^{tan}))
\to K^*(A\gr)$ which is an isomorphism (since $K_*(C^*(\gr\times (0,1]))=0$), and $e_1: K_*(C^*(\gr^{tan})) \to K_*(C^*(\gr))$. The analytic
index can be defined as
$$ind_a=e_1 \circ e_0^{-1}:K^*(A\gr) \to K_*(C^*(\gr)).$$
It is thus possible to work on index problems without using the
algebra of pseudodifferential operators. In the rest of this article,
we will only use deformation groupoids like the tangent groupoid, and
not pseudodifferential algebras.

But in general, this analytic index is not the Fredholm index. In
certain cases, it is possible to define the latter using a refinement
of the tangent groupoid.

In order to use explicitly the tangent groupoid in our case, a little discussion on the algebroid is needed. For this, remark that we can define the vector bundle $TX$ over $X$ as the restriction of the tangent space of a smooth manifold $\tilde{X}$, for example we will use the double of $X$ (gluing along the boundary with the defining function $\rho$), on which $X$ is included. So, $TX:=T_X\tilde{X}$ and we can use the defining function to trivialize the normal bundle of $\partial X$ in $\tilde{X}$ to identify $T_X\tilde{X}$ with the bundle with fibers 
\begin{center}
$T_X\tilde{X}_x=T_x\intx$ if $x\in \intx$ and $T_X\tilde{X}_x=T_x\partial X \times \RR$ if $x\in \partial X$.
\end{center}



Now, $\Gamma(X)$ is a continuous family groupoid and as such one can define $T(\Gamma(X))$, see \cite{MP,LMNpdo,Pat2000} for more details. What it is important for us is its restriction to $X$, where $X\hookrightarrow \Gamma(X)$ as the groupoid's units. This vector bundle $T_X(\Gamma(X))$ over $X$ has fibers
\begin{center}
$T_X(\Gamma(X))_x=T_{(x,x)}(\intx\times\intx)$ if $x\in \intx$ and 
\end{center}
\begin{center}
$T_X\Gamma(X)_x=T_{(x,x)}(\partial X \times \partial X) \times\RR\times\RR$ if $x\in \partial X$.
\end{center}
The algebroid $A(\Gamma(X))\to X$ is defined as the restriction to the unit space $X$ of the vector bundle $T\Gamma(X)=\cup_{x\in X} T\Gamma(X)_{x}$ over the groupoid $\Gamma(X)$, in particular:
\begin{center}
$A(\Gamma(X))_x\simeq T_x\intx$ if $x\in \intx$ and $A(\Gamma(X))_x\simeq T_x\partial X \times \RR$ if $x\in \partial X$.
\end{center}

Take the tangent groupoid associated to $\Gamma(X)$:
$$\Gamma(X)^{tan}=A(\Gamma(X))\bigsqcup \Gamma(X)\times (0,1]\rightrightarrows X\times [0,1]$$

Let us now consider the open subgroupoid of $\Gamma(X)^{tan}$ given by the restriction to 
$X_\cF :=X\times [0,1]\setminus (\partial X\times \{1\})$ :

\begin{equation}\nonumber
\Gamma(X)_{\cF}:=A(\Gamma(X))\bigsqcup \partial X\times \partial X\times \RR\times (0,1)\bigsqcup \intx\times \intx \times (0,1]\rightrightarrows X_\cF.
\end{equation}

As we will discuss below, the deformation index morphism associated to this groupoid gives precisely the Fredholm index. But let us continue with our construction of noncommutative spaces.

By definition $$\intx\times \intx \times (0,1]\subset \Gamma(X)_\cF$$ as a saturated open dense subgroupoid. We can then obtain a complementary closed subgroupoid of $\Gamma(X)_\cF$ :
\begin{equation}
\mathcal{T}_{nc}X\rightrightarrows X_{\partial},
\end{equation}
where $X_{\partial}:=X_\cF\setminus \intx \times (0,1]$.

 To be more descriptive, the groupoid looks like
$$\mathcal{T}_{nc}X=A(\Gamma(X))\bigsqcup \partial X\times \partial X\times \RR\times (0,1).$$

\begin{definition}
The groupoid $\mathcal{T}_{nc}X$ will be called {\it "The noncommutative tangent space of $X$"}. We will also refer to $\Gamma(X)_{\cF}$ as {\it "The Fredholm tangent groupoid of X"}. 
\end{definition}

\subsection{Embeddings of manifolds with boundary and proper free groupoids}\label{sectionembedding}

To define a homomorphism
$\Gamma(X)_{\cF}\stackrel{h}{\longrightarrow}\mathbb{R}^N$ we will
need as in the nonboundary case an appropiate embedding. We recall the
construction outlined in \cite{CMont}.
Consider an embedding
$$i_{\partial}:X\hookrightarrow \mathbb{R}^{N-1},$$
in which $N$ is an even integer.
The embedding we are going to use is
\begin{equation}\label{embedding}
i:X\hookrightarrow \mathbb{R}^{N-1}\times \mathbb{R}_+
\end{equation}
given by 
$$i(x)=(i_{\partial}(x),\rho(x)),$$
where $\rho:X\to \mathbb{R}_+$ is a defining function of the boundary.

We can then use it to define a homomorphism 
$h:\Gamma(X)\rightarrow \mathbb{R}^{N}$ as
follows.
\begin{equation}\label{hdefinition}
  h:
  \begin{cases}
    h(x,y)=(i_{\partial}(x)-i_{\partial}(y),\,log(\frac{\rho(x)}{\rho(y)}))  \text{ on }
    \intx \times\intx\\
h(x,y,\alpha)=(i_{\partial}(x)-i_{\partial}(y),\alpha) \text{ on } \partial X\times \partial X \times
\mathbb{R} \\
  \end{cases}
\end{equation}

By the definition of the topology on $\Gamma(X)$, it is clearly a continuous family groupoid morphism. 

The interest of defining a good morphism $h$ is that the induced
groupoid will be free and proper, like in the case of a smooth manifold, as it was shown by Connes. The freeness follows from a general principle: if we have a homomorphism $\gr\stackrel{h}{\longrightarrow}\RR^N$ then the isotropy groups of the induced groupoid $\gr_h$ are 
$$(\gr_h)_{(x,X)}^{(x,X)}=\{(\gamma,X)\in \gr\times \RR^N/\gamma\in \gr_x^x, \, h(\gamma)=0\},$$
where as usual for a groupoid $\gr\rightrightarrows \gr_0$, $\gr_x^x:=s^{-1}(\{x\})\cap t^{-1}(\{x\})$ for a unit $x\in \gr_0$ denotes the isotropy group of $x$.
In particular, if $h$ is a monomorphism ($h(\gamma)=0$ iff $\gamma$ is a unit), it implies that the isotropy groups described above are trivial, $(\gr_h)_{(x,X)}^{(x,X)}=\{(x,X)\}$\footnote{As is usual for groupoids, we identify the space of units with the space of identity arrows}.

We will now check the properness of the groupoids $\gr_h$ we will be using. For that we will use the general properness condition $(ii)$ of proposition 2.14 in \cite{Tu04} which tell us that we need to check two things: 
\begin{itemize} 
\item[(A)] The map $$\gr_h\stackrel{(r,s)}{\longrightarrow}(\gr^{(0)}\times \RR^N)\times (\gr^{(0)}\times \RR^N)$$ is closed, and 

\item[(B)] For every $(a,X)\in \gr^{(0)}\times \RR^N$ the stabilizers $(\gr_h)_{(a,X)}:=\{\gamma\in \gr:t(\gamma)=a=s(\gamma)\, and \, X=X+h(\gamma) \}$ are quasi-compact. 
\end{itemize} 

In our case property (B) is immediately verified, indeed, as we mentioned above, $h$ is a monomorphism so the stabilizers are trivial, $(\Gamma(X)_\mathscr{F})_{(a,X)}=\{(a,X)\}$, hence quasi-compact.

\begin{lemma}
The induced semi-direct groupoid $\Gamma(X)_h$ is a free proper groupoid.
\end{lemma}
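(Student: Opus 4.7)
The plan is to verify the two conditions recalled just above from \cite{Tu04}: (B) quasi-compactness of stabilizers, and (A) closedness of $(r,s)$. Triviality of the stabilizers, which implies both (B) and freeness, follows from showing that $h$ is a monomorphism on $\Gamma(X)$. On $\intx\times\intx$, the identity $h(x,y)=(i_\partial(x)-i_\partial(y),\log(\rho(x)/\rho(y)))=0$ forces $(i_\partial(x),\rho(x))=(i_\partial(y),\rho(y))$, which since $i=(i_\partial,\rho)$ is an embedding forces $x=y$. On $\partial X\times\partial X\times\RR$, $h(x,y,\alpha)=0$ forces $x=y$ and $\alpha=0$. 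As the units of the two components of $\Gamma(X)$ lie over disjoint parts of $X$, we conclude that $h^{-1}(0)$ is exactly the unit space, giving both freeness and triviality (hence quasi-compactness) of stabilizers in $\Gamma(X)_h$.

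For (A), I would take a sequence $(\gamma_n,V_n)\in\Gamma(X)_h$ with $r(\gamma_n)\to a$, $s(\gamma_n)\to b$ in $X$, $V_n\to V$ and $V_n+h(\gamma_n)\to W$ in $\RR^N$, so $h(\gamma_n)\to Z:=W-V$, and produce $\gamma\in\Gamma(X)$ with $r(\gamma)=a$, $s(\gamma)=b$, $h(\gamma)=Z$. Passing to a subsequence, $\gamma_n$ may be assumed to remain in a single component of $\Gamma(X)$, after which I would split cases on the positions of $a$ and $b$ in $X$.

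The case $a,b\in\intx$ is handled directly by continuity of $h$ on the open component, giving $\gamma=(a,b)$. The mixed case $a\in\intx$, $b\in\partial X$ (or symmetrically) is impossible: from the boundary component both endpoints lie in $\partial X$, while for $\gamma_n=(x_n,y_n)\in\intx\times\intx$ the last coordinate $\log(\rho(x_n)/\rho(y_n))$ of $h(\gamma_n)$ diverges, contradicting $h(\gamma_n)\to Z$. The key case is $a,b\in\partial X$: if $\gamma_n=(x_n,y_n,\alpha_n)\in\partial X\times\partial X\times\RR$, then $\alpha_n$ is the last coordinate of $h(\gamma_n)$ and hence converges to $Z_N$, giving $\gamma_n\to(a,b,Z_N)\in\Gamma(X)$. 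The delicate subcase is $\gamma_n=(x_n,y_n)\in\intx\times\intx$ with both $x_n,y_n$ accumulating on the boundary, where I would invoke the explicit topology of $\Gamma(X)$ recalled in this section, namely $(x_n,y_n)\to(a,b,\beta)$ iff $\log(\rho(x_n)/\rho(y_n))\to\beta$; since this log-ratio is precisely the last coordinate of $h(\gamma_n)$, the assumption $h(\gamma_n)\to Z$ yields $\gamma_n\to(a,b,Z_N)\in\Gamma(X)$, and continuity of $h$ across strata gives $h(\gamma)=Z$.

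The main obstacle is this last subcase. A priori the log-ratio of the defining functions can escape to $\pm\infty$ as the endpoints approach the boundary, and the entire argument relies on the deliberate match between the last coordinate of the embedding $i$ and the defining function $\rho$ built into the groupoid topology of $\Gamma(X)$. This matching is exactly what forces the potentially divergent log-ratio to converge whenever $h(\gamma_n)$ does, turning a possible escape to infinity into a genuine limit in the boundary stratum.
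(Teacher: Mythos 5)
Your proposal is correct and follows essentially the same route as the paper: freeness and condition (B) from the fact that $h$ is a monomorphism (trivial stabilizers), and condition (A) by taking a sequence $(\gamma_n,V_n)$ with $(r,s)(\gamma_n,V_n)$ convergent, passing to a subsequence in a single stratum, ruling out the mixed interior/boundary case by divergence of $\log(\rho(x_n)/\rho(y_n))$, and in the boundary--boundary case using exactly the defining topology of $\Gamma(X)$ to force the log-ratio (the last coordinate of $h(\gamma_n)$) to converge and produce the limit arrow $(a,b,Z_N)$. The only difference is cosmetic: you spell out the injectivity of $h$ (which the paper only asserts) and phrase the boundary subcases slightly more compactly, but the case analysis coincides with the paper's cases $(a)$--$(d)$.
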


\begin{proof}
As mentioned above we have to verify only property (A), that is, we have to check that the map
$$\Gamma(X)\times \RR^N\stackrel{(r,s)}{\longrightarrow} (X\times \RR^N)\times (X\times \RR^N)$$
given by
\begin{equation}
 (r,s):
  \begin{cases}
    (x,y,X)\mapsto ((x,X),(y,X+(i_{\partial}(x)-i_{\partial}(y),\,log(\frac{\rho(x)}{\rho(y)})))  \text{ on }
    \intx \times\intx \times \RR^N\\
(x,y,\alpha,X)\mapsto((x,X),(y,X+(i_{\partial}(x)-i_{\partial}(y),\alpha) )\text{ on } \partial X\times \partial X \times
\mathbb{R} \times \RR^N\\
  \end{cases}
\end{equation}
is closed.

Let $(A_n)_n:=(\gamma_n,X_n)_n$ a sequence in $\Gamma(X)\times \RR^N$ such that 
\begin{equation}\label{convergencegamma1}
lim_{n\to \infty}(r,s)(\gamma_n,X_n)=P
\end{equation}
with $P$ a point in $(X\times \RR^N)\times (X\times \RR^N)$. It is enough to justify that there is a subsequence of $(A_n)_n$ converging to an antecedent of $P$: We will separate the analysis in four cases
\begin{itemize}
\item[$(a)$] Suppose $P=((x,X),(y,Y))$ with $x\in \intx$ and $y\in \partial X$. Since $\intx$ is open in $X$ we have that $\gamma_n\in \intx\times \intx$ from a certain large enough $n$, that is, we might suppose that for each $n$, $\gamma_n=(x_n,y_n)$ for some $x_n,y_n\in \intx$ and in particular 
$$(r,s)(\gamma_n,X_n)=((x_n,X_n),(y_n,X_n+(i_{\partial}(x_n)-i_{\partial}(y_n),\,log(\frac{\rho(x_n)}{\rho(y_n)})))).$$
Now, the limit (\ref{convergencegamma1}) above implies the following convergences:
$x_n\to x$, $y_n\to y$, $X_n\to X$ and $X_n+(i_{\partial}(x_n)-i_{\partial}(y_n),\,log(\frac{\rho(x_n)}{\rho(y_n)}))\to Y$. Hence we also have that 
$log(\frac{\rho(x_n)}{\rho(y_n)})$ converges, but this is impossible since 
$\rho(x_n)\to \rho(x)> 0$ and $\rho(y_n)\to \rho(y)= 0$. This case is thus not possible.

\item[$(b)$] Suppose $P=((x,X),(y,Y))$ with $x\in \partial X$ and $y\in \intx$. This case is symmetric to the precedent one, the same analysis shows is empty. 

\item[$(c)$] Suppose $P=((x,X),(y,Y))$ with $x\in \intx$ and $y\in \intx$. We might suppose again that for each $n$, $\gamma_n=(x_n,y_n)$ for some $x_n,y_n\in \intx$ and in particular 
$$(r,s)(\gamma_n,X_n)=((x_n,X_n),(y_n,X_n+(i_{\partial}(x_n)-i_{\partial}(y_n),\,log(\frac{\rho(x_n)}{\rho(y_n)})))).$$
Let $A=(x,y, X)$, $A\in \Gamma(X)\times \RR^N$ and $(r,s)(A)=P$. The limit (\ref{convergencegamma1}) implies that we have the convergence $A_n\to A$.

\item[$(d)$] Finally, suppose $P=((x,X),(y,Y))$ with $x\in \partial X$ and $y\in \partial X$. In this case we have two possibilities\footnote{the two options may coexist.}, $(d_1):$ either $A_n$ has a subsequence completely contained in $\intx\times\intx\times \RR^N$, or $(d_2):$ 
$A_n$ has a subsequence completely contained in $\partial X\times \partial X\times \RR\times \RR^N$. 

In the case $(d_1)$ we might suppose again that for each $n$, $\gamma_n=(x_n,y_n)$ for some $x_n,y_n\in \intx$. The limit (\ref{convergencegamma1}) above implies the following convergences:
$x_n\to x$, $y_n\to y$, $X_n\to X$ and $X_n+(i_{\partial}(x_n)-i_{\partial}(y_n),\,log(\frac{\rho(x_n)}{\rho(y_n)}))\to Y$. In particular we also have 
$log(\frac{\rho(x_n)}{\rho(y_n)})$ converges to a certain $\alpha\in \RR$.
Hence, letting 
$A=(x,y,\alpha, X)$ we have that $(r,s)(A)=P$ and $A_n$ converges to 
$A$.

In the case $(d_2)$, we might suppose that for each $n$, $\gamma_n=(x_n,y_n,\alpha_n)$ with $x_n,y_n\in \partial X$ and $\alpha_n\in \RR$. The limit (\ref{convergencegamma1}) above implies the following convergences:
$x_n\to x$, $y_n\to y$, $X_n\to X$ and $X_n+(i_{\partial}(x_n)-i_{\partial}(y_n),\alpha_n)\to Y$. Thus $\alpha_n$ converges too to a $\alpha\in \RR$.
Hence, letting 
$A=(x,y,\alpha, X)$ we have that $(r,s)(A)=P$ and $A_n$ converges to 
$A$.

\end{itemize}

\end{proof}
Now, the morphism $h$ induces a morphism between the algebroids (see (\ref{AGmorphism}) for more details), 
$$A(h):A(\Gamma(X))\to A(\RR^N)=\RR^N.$$
With the identification that we have for the algebroid, we explicitly have
\begin{equation} 
A(h)(x,V)=d_xi^+(V) 
\end{equation} 
if $x\in \intx$ and $V\in T_x\intx$, where $i^+:\intx\to\RR^N$ is defined as $i^+(x):=(i_{\partial}(x),log(\rho(x)))$; and 
\begin{equation} 
A(h)((x,\xi),\alpha)=(d_xi_{\partial}(\xi),\alpha) 
\end{equation} 
if $x\in \partial X$, $\xi\in T_x\partial X$ and $\alpha\in \RR$, where $i_{\partial}$ is the restriction of $i_{\partial}$ to $\partial X$.
We also have the properness of the respective action.
\begin{lemma}
The induced semi-direct groupoid $A(\Gamma(X))_{A(h)}$ is a free proper groupoid.
\end{lemma}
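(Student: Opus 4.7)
I would prove this by mirroring the structure of the proof just completed for $\Gamma(X)_h$, namely by verifying the two conditions from Proposition 2.14 in \cite{Tu04}: (A) closedness of $(r,s)$, and (B) quasi-compactness of the stabilizers. The second will follow automatically once freeness is established, so the real work splits into checking that $A(h)$ is a fiberwise monomorphism and that $(r,s)$ is a closed map.

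For freeness, the isotropy of $A(\Gamma(X))_{A(h)}$ at $(x,X)$ is exactly the kernel of $A(h)$ restricted to the algebroid fiber at $x$. On the interior part, $A(h)(x,V)=d_xi^+(V)$ with $i^+=(i_\partial,\log\rho)\colon\intx\to\RR^{N-1}\times\RR$, and since $i_\partial$ is a global embedding of $X$ already, $d_xi^+$ is injective. On the boundary part, $A(h)((x,\xi),\alpha)=(d_xi_\partial(\xi),\alpha)$ is injective because $i_\partial|_{\partial X}$ is an embedding and the $\alpha$-coordinate appears unchanged. Thus the isotropies are trivial, giving both freeness and condition (B).

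For properness, I take a sequence $A_n=(v_n,X_n)$ with $(r,s)(A_n)\to P$. Since $A(\Gamma(X))$ is a bundle of groups, both components of $(r,s)$ share the same base point in $X$, so necessarily $P=((x,X),(x,Y))$ with $\pi(v_n)\to x$, $X_n\to X$, and $A(h)(v_n)\to Y-X$. If $x\in\intx$, then eventually $\pi(v_n)\in\intx$, the map $d_{\pi(v_n)}i^+$ depends continuously on $\pi(v_n)$ and is injective at $x$, so $v_n$ is bounded and, up to a subsequence, converges to $v\in T_x\intx$ with $d_xi^+(v)=Y-X$; the limit $(v,X)$ is an antecedent of $P$.

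The hard case, and the main obstacle, is $x\in\partial X$, because here one must handle sequences $v_n$ whose base points can freely oscillate between the interior and boundary strata (whose algebroid fibers have different geometric descriptions). The trick is to use the $b$-trivialization of $A(\Gamma(X))$ in a collar $U\cong[0,1)_\rho\times\partial X$, where the frame $\rho\partial_\rho,\partial_{y_1},\ldots,\partial_{y_{n-1}}$ identifies $A(\Gamma(X))|_U$ with $U\times\RR^n$ uniformly across the boundary. In these coordinates $v_n=(\alpha_n,c_n)$, and at an interior point $\pi(v_n)=(\rho_n,y_n)$ the formula for $A(h)$ becomes
\begin{equation*}
A(h)(v_n)=\bigl(\alpha_n\rho_n\,d_{\pi(v_n)}i_\partial(\partial_\rho)+\textstyle\sum_i c_{n,i}\,d_{\pi(v_n)}i_\partial(\partial_{y_i}),\,\alpha_n\bigr),
\end{equation*}
which extends continuously to the boundary formula $(\sum_i c_i\,d_{y_0}i_\partial(\partial_{y_i}),\alpha)$ because $\rho_n\to0$ kills the singular-looking first term. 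The last coordinate forces $\alpha_n\to\alpha:=(Y-X)_N$; the remaining $N-1$ coordinates then give $\sum_i c_{n,i}\,d_{\pi(v_n)}i_\partial(\partial_{y_i})\to(Y-X)_{1\ldots N-1}$. Since $d_xi_\partial|_{T_x\partial X}$ is injective, its continuous left inverse on a neighborhood of $x$ shows that $c_n$ is bounded and converges, up to a subsequence, to some $c$ with $\sum_i c_i\,d_xi_\partial(\partial_{y_i})=(Y-X)_{1\ldots N-1}$. The limit $(v,X)$ with $v=(\alpha,c)\in A(\Gamma(X))_x$ is a preimage of $P$, closing $(r,s)$ and finishing the proof.
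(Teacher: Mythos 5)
Your proof is correct and follows essentially the same route as the paper's: freeness (hence condition (B)) from the fiberwise injectivity of $A(h)$, and properness by checking that $(r,s)$ is closed through a sequential case analysis on whether the limiting base point lies in the interior or on the boundary, using Tu's criterion. Your collar computation in the $b$-frame $\rho\partial_\rho,\partial_{y_1},\dots,\partial_{y_{n-1}}$ simply makes explicit (and slightly streamlines, via the observation that both entries of $(r,s)$ share the same base point) the convergence that the paper asserts more tersely through the closed-embedding properties of $di^+$ and $di_\partial$ in its subcases $(d_1)$ and $(d_2)$.
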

\begin{proof}
Again, we have to verify only property (A), that is, we have to check that the map
$$A(\Gamma(X))\times \RR^N\stackrel{(r,s)}{\longrightarrow} (X\times \RR^N)\times (X\times \RR^N)$$
given by
\begin{equation}
 (r,s):
  \begin{cases}
    ((x,V),X)\mapsto ((x,X),(x,X+d_xi^+(V))  \text{ on }
    T\intx \times \RR^N\\
((x,\xi),X)\mapsto((x,X),(x,X+(d_xi_{\partial}(\xi),\alpha) )\text{ on } 
T\partial X \times
\mathbb{R} \times \RR^N\\
  \end{cases}
\end{equation}
is closed.

Let $(A_n)_n:=(\gamma_n,X_n)_n$ a sequence in $A(\Gamma(X))\times \RR^N$ such that 
\begin{equation}\label{convergencegamma}
lim_{n\to \infty}(r,s)(\gamma_n,X_n)=P
\end{equation}
with $P$ a point in $(X\times \RR^N)\times (X\times \RR^N)$. It is enough to justify that there is a subsequence of $(A_n)_n$ converging to an antecedent of $P$: We will separate the analysis in four cases
\begin{itemize}
\item[$(a)$] Suppose $P=((x,X),(y,Y))$ with $x\in \intx$ and $y\in \partial X$. Since $\intx$ is open in $X$ we have that $\gamma_n\in T\intx$ from a certain large enough $n$, that is, we might suppose that for each $n$, $\gamma_n=(x_n,V_n)$ for some $V_n\in T_{x_n}\intx$ and in particular 
$$(r,s)(\gamma_n,X_n)=((x_n,X_n),(x_n,X_n+d_{x_n}i^+(V_n))).$$
Now, the limit (\ref{convergencegamma}) above implies in particular the following convergences:
$x_n\to x$ and $x_n\to y$. This case is thus not possible.

\item[$(b)$] The case $P=((x,X),(y,Y))$ with $x\in \partial X$ and $y\in \intx$ is empty, the argument of $(a)$ above applies as well.

\item[$(c)$] Suppose $P=((x,X),(y,Y))$ with $x\in \intx$ and $y\in \intx$. We might suppose again that for each $n$, $\gamma_n=(x_n,V_n)$ for some $\xi_n\in T_{x_n}\intx$ and in particular 
$$(r,s)(\gamma_n,X_n)=((x_n,X_n),(x_n,X_n+d_{x_n}i^+(V_n))).$$
The limit (\ref{convergencegamma}) implies that $x_n\to x$, $X_n\to X$ and $X_n+d_{x_n}i^+(V_n)\to Y$, in particular $(d_{x_n}i^+(V_n))_n$ converges in $\RR^N$ too. Now, since $i^+$ is an embedding we have that $di^+$ is a closed embedding, in other words there is a $V\in T_x\intx$ such that $d_xi^+(V)$ is the limit of $(d_{x_n}i^+(V_n))_n$. Hence, letting 
$A=((x,V), X)\in T\intx \times \RR^N$ we have that $(r,s)(A)=P$ and $A_n$ converges to $A$.

\item[$(d)$] Finally, suppose $P=((x,X),(y,Y))$ with $x\in \partial X$ and $y\in \partial X$. In this case we have two possibilities, $(d_1):$ either $A_n$ has a subsequence completely contained in $T\intx\times \RR^N$, or $(d_2):$ 
$A_n$ has a subsequence completely contained in $T\partial X\times \RR\times \RR^N$.

In the case $(d_1)$ we might suppose again that for each $n$, $\gamma_n=(x_n,V_n)$ for some $V_n\in T_{x_n}\intx$. The limit (\ref{convergencegamma}) above implies the following convergences:
$x_n\to x$, $X_n\to X$ and $X_n+d_{x_n}i^+(V_n)\to Y$. In particular we also have 
$d_{x_n}i^+(V_n)$ converges in $\RR^N$. Again, there is then a $V\in T_x\intx$ such that $d_xi^+(V)$ is the limit of $(d_{x_n}i^+(V_n))_n$.
Hence, letting 
$A=((x,V), X)$ we have that $(r,s)(A)=P$ and $A_n$ converges to 
$A$.

In the case $(d_2)$, we might suppose that for each $n$, $\gamma_n=((x_n,\xi_n),\alpha_n)$ with $\xi_n\in T_{x_n}\partial X$ and $\alpha_n\in \RR$. The limit (\ref{convergencegamma}) above implies the following convergences:
$x_n\to x$, $X_n\to X$ and $X_n+(d_{x_n}i_{\partial}(\xi_n),\alpha_n)\to Y$. Thus $d_{x_n}i_{\partial}(\xi_n)$ and $\alpha_n$ converge too in $\RR^{N-1}$ and  in $\RR$ respectively.
Again, because $di_{\partial}$ is a closed embedding, there is a $\xi \in T_x\partial X$ such that $d_xi_{\partial}(\xi)$ is the limit of $(d_{x_n}i_{\partial}(\xi_n))_n$. Letting 
$A=((x,\xi),\alpha, X)\in T_\partial X\times \RR\times \RR^N$ with $\alpha$ the limit of $(\alpha_n)_n$, we have that $(r,s)(A)=P$ and $A_n$ converges to $A$.

\end{itemize}

\end{proof}

Let us now apply to $h$ the tangent groupoid functor to obtain a continuous family groupoid morphism
$$h^{tan}: \Gamma(X)^{tan}\to (\RR^N)^{tan},$$
explicitly given by 
\begin{equation}\label{htandefinition}
  h^{tan}:
  \begin{cases}
    h^{tan}(\gamma,\varepsilon)=(h(\gamma),\varepsilon)  \text{ on }
    \Gamma(X)^{tan}\times (0,1]\\
h^{tan}(x,\xi)=A(h)(x,\xi) \text{ on } A(\Gamma(X)) \\
  \end{cases}
\end{equation}

Remember now that the tangent groupoid of $\RR^N$ (as an additive group) is diffeomorphic to $\RR^N\times [0,1]$ by the diffeormorphism $(\RR^N)^{tan}\to \RR^N\times [0,1]$ given by 
\begin{equation}\label{RNtan}
  \begin{cases}
    (X,0)\mapsto (X,0)  \text{ on }
    \RR^N \times \{0\}\\
(X,\varepsilon)\mapsto (\frac{X}{\varepsilon},\varepsilon) \text{ on } \RR^N\times(0,1] \\
  \end{cases}
\end{equation}

As a corollary of proposition \ref{tanproperaction} and the two lemmas above we have
\begin{corollary}
Consider the continuous family groupoids morphism $\Gamma(X)^{tan}\stackrel{h^T}{\to}\RR^N$ given as the composition of $h^{tan}$ composed with the diffeomorphism $(\RR^N)^{tan}\approx \RR^N\times[0,1]$ and finally with the projection on $\RR^N$. 

Then the semi-direct groupoid $(\Gamma(X)^{tan})_{h^T}$ is a free proper groupoid.
\end{corollary}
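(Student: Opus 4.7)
The plan is to reduce the statement to the two preceding lemmas plus the (already available) proposition \ref{tanproperaction}, by analyzing $(\Gamma(X)^{tan})_{h^T}$ fiber-by-fiber over the deformation parameter $\varepsilon\in[0,1]$.

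First, I would unwind the definition of $h^T$. By (\ref{htandefinition}) together with the diffeomorphism (\ref{RNtan}), one has
\[
h^T(\gamma,\varepsilon)=\tfrac{1}{\varepsilon}h(\gamma)\quad\text{for }\varepsilon\in(0,1],\qquad h^T(x,\xi)=A(h)(x,\xi)\quad\text{for }\varepsilon=0.
\]
Consequently, the restriction of $(\Gamma(X)^{tan})_{h^T}$ to $X\times\{0\}\times\RR^N$ is exactly the semi-direct groupoid $A(\Gamma(X))_{A(h)}$, while its restriction to $X\times(0,1]\times\RR^N$ is the product family whose fiber at each $\varepsilon>0$ is $\Gamma(X)_{h/\varepsilon}$. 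The scaling map $(\gamma,X)\mapsto(\gamma,\varepsilon X)$ is a groupoid isomorphism $\Gamma(X)_{h/\varepsilon}\xrightarrow{\sim}\Gamma(X)_h$, so every positive fiber is canonically isomorphic to $\Gamma(X)_h$.

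Second, the two lemmas established just above this corollary tell us precisely that $\Gamma(X)_h$ and $A(\Gamma(X))_{A(h)}$ are both free and proper, so each fiber of $(\Gamma(X)^{tan})_{h^T}$ along $[0,1]$ is free and proper. Freeness of the total groupoid then follows immediately: $h^T$ is injective on each fiber (being either $A(h)$ or a nonzero multiple of $h$, both of which are shown to be injective in the proofs of the previous lemmas, the kernel being exactly the units), so all isotropy groups of $(\Gamma(X)^{tan})_{h^T}$ are trivial.

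Third, for properness I would invoke proposition \ref{tanproperaction}, which is exactly the statement that properness of the two fibers at $\varepsilon=0$ and $\varepsilon>0$, glued via the tangent-groupoid construction, yields properness of the whole deformation. Equivalently, one can verify condition (A) of \cite{Tu04} directly on $(\Gamma(X)^{tan})_{h^T}$: take a sequence $(\gamma_n,X_n,\varepsilon_n)$ whose image under $(r,s)$ converges in $(X\times[0,1]\times\RR^N)^2$; up to extraction either $\varepsilon_n\to\varepsilon_\infty>0$, in which case closedness reduces to the $\Gamma(X)_h$ case via the scaling isomorphism above, or $\varepsilon_n\to 0$, in which case one must analyze the possible limits inside $A(\Gamma(X))_{A(h)}$ using the deformation-to-the-normal-cone structure of $\Gamma(X)^{tan}$ precisely as in the proof of Lemma 2. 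The only delicate point, and the one I expect to be the main obstacle if one does not wish to rely on proposition \ref{tanproperaction}, is to control what happens when $\varepsilon_n\to 0$ while $\gamma_n$ escapes to infinity in $\Gamma(X)$, but this is handled by the deformation-to-the-normal-cone topology, which forces $\varepsilon_n\cdot \gamma_n$ (in appropriate local coordinates) to converge to an element of $A(\Gamma(X))$; the closedness of $A(h)$ established in the second lemma then finishes the argument.
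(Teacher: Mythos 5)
Your proposal is correct and takes essentially the same route as the paper: there the corollary is deduced directly from the two lemmas (freeness and properness of $\Gamma(X)_h$ and of $A(\Gamma(X))_{A(h)}$, with $h$ and $A(h)$ having trivial kernel) together with proposition \ref{tanproperaction}, whose proof in the appendix is exactly the case analysis of Tu's condition (A) that you sketch for $\varepsilon_n\to\varepsilon_\infty>0$ versus $\varepsilon_n\to 0$. Your fiberwise identification of the positive-$\varepsilon$ fibers with $\Gamma(X)_h$ via the scaling isomorphism is a harmless extra observation that is not needed once that proposition is invoked.
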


We consider finally the morphism on the Fredholm groupoid
\begin{equation}
h_{\cF}:\Gamma(X)_{\cF}\to \RR^N
\end{equation} 
given by the restriction of $h^T$ to
$\Gamma(X)_{\cF}$.

We have obtained in particular the following result:

\begin{proposition}\label{hcontinue}
 $h_{\cF}:\Gamma(X)_{\cF}\rightarrow \mathbb{R}^N$ defines a homomorphism of continuous family groupoids and the groupoid $(\Gamma(X)_{\cF})_{h_\cF}$ is a free proper groupoid.
\end{proposition}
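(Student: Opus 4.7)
The plan is to obtain Proposition \ref{hcontinue} as a direct consequence of the preceding corollary on $(\Gamma(X)^{tan})_{h^T}$, by noting that $(\Gamma(X)_\cF)_{h_\cF}$ is nothing but the restriction of $(\Gamma(X)^{tan})_{h^T}$ to an open saturated subset of its unit space. The fact that $h_\cF$ is a morphism of continuous family groupoids is then immediate from its definition as the restriction of $h^T$ to $\Gamma(X)_\cF$, since restrictions of continuous groupoid morphisms to subgroupoids are again continuous groupoid morphisms.

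The first concrete step is a saturation check: I verify that $X_\cF = X \times [0,1] \setminus (\partial X \times \{1\})$ is open and saturated in $X \times [0,1]$ under $\Gamma(X)^{tan}$ (equivalently $X_\cF \times \RR^N$ is open and saturated under $(\Gamma(X)^{tan})_{h^T}$). Openness is immediate. For saturation, the orbit in $X \times [0,1]$ of a point $(x,t)$ under $\Gamma(X)^{tan}$ is $\{(x,0)\}$ when $t=0$ (since $A(\Gamma(X))$ is a bundle of groups fixing the base), $\intx \times \{t\}$ when $t>0$ and $x \in \intx$, and $\partial X \times \{t\}$ when $t>0$ and $x \in \partial X$. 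None of these meets $\partial X \times \{1\}$ unless it equals $\partial X \times \{1\}$, so $X_\cF$ is saturated; the same decomposition, trivially extended by $\RR^N$, handles saturation inside the semi-direct product.

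The second step is to transfer freeness and properness through this open saturated restriction. Freeness is automatic, as the isotropy of an open subgroupoid is a subgroup of the ambient isotropy, which is trivial by the corollary. For properness, I would invoke the standard fact that if $G$ is a proper groupoid and $U \subset G^{(0)}$ is an open saturated subset, then the map $(r,s) \colon G|_U \to U \times U$ is still proper: any compact $K \subset U \times U$ is compact in $G^{(0)} \times G^{(0)}$, so $(r,s)^{-1}(K)$ is compact in $G$, and by saturation of $U$ it is entirely contained in $G|_U$. Applied to the corollary with $G = (\Gamma(X)^{tan})_{h^T}$ and $U = X_\cF \times \RR^N$, this yields the proposition. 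The only substantive point in the argument is the saturation verification above; the transfer of freeness and properness, and of the morphism property of $h_\cF$, are formal, and no real new obstacle appears beyond what was already handled in the preceding two lemmas and the tangent-groupoid corollary.
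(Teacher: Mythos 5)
Your proposal is correct and follows essentially the same route as the paper, which obtains Proposition \ref{hcontinue} directly ("in particular") from the corollary that $(\Gamma(X)^{tan})_{h^T}$ is free and proper, by viewing $(\Gamma(X)_{\cF})_{h_\cF}$ as the restriction of $(\Gamma(X)^{tan})_{h^T}$ to the open (saturated) subset $X_\cF\times\RR^N$ of its units. You merely make explicit the saturation check and the transfer of freeness and properness, which the paper leaves implicit (and, as your own argument shows, properness and freeness already pass to restrictions over any open subset of the unit space, so saturation is not even essential for this step).
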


\begin{remark}
As an immediate consequence of the proposition above, the groupoid
$(\Gamma(X)_{\cF})_h$ is Morita equivalent to its space of orbits, (see proposition \ref{OMorita}). 
\end{remark}

\subsection*{The index morphism of the orbit space of $(\Gamma(X)_{\cF})_h$}

Let us denote by $\mathscr{B}_h=X_\cF\times \RR^N/\sim_h$ the space of orbits of
$(\Gamma(X)_{\cF})_h$. Remember that to  define the deformation index morphism associated to 
the Fredholm groupoid we considered the saturated open dense subgroupoid 
$\intx\times\intx \times (0,1]\rightrightarrows \intx\times (0,1]$ and
its closed complement $\mathcal{T}_{nc}(X)\rightrightarrows
X_{\partial}$. The restrictions of $h$ to these two subgroupoids have
the same properties as $h$, the induced actions are free and proper. Moreover, since we are dealing with saturated subgroupoids, we have a good behaviour at the level of orbit spaces. That is, denoting by $\mathscr{B}_{h_0}=X_{\partial}\times \RR^N/\sim_{h_0}$ and $\mathscr{B}_{h_1}=\intx\times \RR^N/\sim_{h_1}$ the orbit spaces of $(\mathcal{T}_{nc}(X))_{h_0}$ and $(\intx\times\intx)_{h_1}$ respectively, we have an index morphism

\begin{equation}\label{indexBh}
\xymatrix{
K^0(\mathscr{B}_{h_0})&
K^0(\mathscr{B}_h)\ar[l]_-{(e_0)_*}^-{\approx}\ar[r]^-{(e_1)_*}& K^0(\mathscr{B}_{h_1})
}
\end{equation}
by considering the open dense subset $\mathscr{B}_{h_{(0,1]}}=\intx\times(0,1]\times\RR^N/\sim$ of $\mathscr{B}_h$.

We want next to fully understand this index morphism. Since we are now dealing with spaces this should be possible. 

\begin{lemma}\label{lemmaqint}
We have an homeomorphism between the open dense subset $\mathscr{B}_{h_{(0,1]}}=\intx\times(0,1]\times\RR^N/\sim$ of $\mathscr{B}_h$ and 
$(0,1]\times\RR^N$. More explicitly, the map
$$(x,\varepsilon, X)\mapsto (\varepsilon, \varepsilon\cdot X+i(x))$$
passes to the quotient into an homeomorphism
$$\widetilde{\overset{\:\circ}{q}}:\intx\times(0,1]\times\RR^N/\sim\longrightarrow (0,1]\times\RR^N$$
\end{lemma}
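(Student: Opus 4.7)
The plan is to (i) verify that the proposed map is constant on orbits via a direct algebraic identity, (ii) check bijectivity by exhibiting explicit preimages, and (iii) promote the continuous bijection to a homeomorphism by producing a global continuous section. The key observation is that on the open dense piece $\intx\times\intx\times(0,1]\subset\Gamma(X)_\cF$, the homomorphism $h_\cF$ (after the diffeomorphism $(\RR^N)^{tan}\approx \RR^N\times[0,1]$ of \eqref{RNtan}) reads $(x,y,\varepsilon)\mapsto h(x,y)/\varepsilon$, which by \eqref{hdefinition} equals $(i^+(x)-i^+(y))/\varepsilon$, where $i^+(x):=(i_\partial(x),\log\rho(x))$ is the interior embedding the reader should identify with $i(x)$ in the lemma's formula. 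Consequently, the orbit relation on $\intx\times(0,1]\times\RR^N$ reads $(x,\varepsilon,X)\sim(y,\varepsilon,X+(i^+(x)-i^+(y))/\varepsilon)$, and the one-line identity
\[
\varepsilon\bigl(X+(i^+(x)-i^+(y))/\varepsilon\bigr)+i^+(y) \;=\; \varepsilon X+i^+(x)
\]
shows that $q(x,\varepsilon,X):=(\varepsilon,\varepsilon X+i^+(x))$ is constant on orbits, hence descends to a continuous map $\widetilde{\overset{\:\circ}{q}}$.

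For bijectivity, fix any $x_0\in\intx$: surjectivity is immediate from $q(x_0,\varepsilon,(Y-i^+(x_0))/\varepsilon)=(\varepsilon,Y)$, and if $q(x_1,\varepsilon,X_1)=q(x_2,\varepsilon,X_2)$, a rearrangement yields $X_2=X_1+(i^+(x_1)-i^+(x_2))/\varepsilon$, which is exactly the relation produced by the pair-groupoid arrow $(x_1,x_2,\varepsilon)\in\intx\times\intx\times(0,1]$, so the two representatives sit in the same orbit. To upgrade this continuous bijection to a homeomorphism, I would exhibit a global continuous section $\sigma:(0,1]\times\RR^N\to\intx\times(0,1]\times\RR^N$ by $\sigma(\varepsilon,Y):=(x_0,\varepsilon,(Y-i^+(x_0))/\varepsilon)$, which is continuous since $\varepsilon>0$. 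Composing $\sigma$ with the quotient projection gives a continuous map into $\mathscr{B}_{h_{(0,1]}}$ which, by the preceding checks, is a two-sided inverse of $\widetilde{\overset{\:\circ}{q}}$.

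The main subtlety — rather than a real obstacle — is just the identification of the $i$ in the lemma with the embedding $i^+$ (whose last coordinate is $\log\rho$), as opposed to the boundary-respecting embedding $i$ of \eqref{embedding}; this is what makes the formula $\varepsilon X+i(x)$ an orbit invariant, because it is $i^+$ (and not $i$) which appears in $h$ on the interior. Once this reconciliation is made, the positivity $\varepsilon>0$ on the open part rules out any singularity, the section $\sigma$ is manifestly continuous, and the proof reduces to the two elementary algebraic identities above.
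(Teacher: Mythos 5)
Your proof is correct, and it follows the same skeleton as the paper's (compute the orbit relation on $\intx\times(0,1]\times\RR^N$ induced by $h_\cF$ and check that it coincides exactly with the fibers of the map $(x,\varepsilon,X)\mapsto(\varepsilon,\varepsilon X+i(x))$), but it differs in two useful ways. First, the paper finishes by asserting that $\overset{\:\circ}{q}$ is ``obviously a continuous open surjection'' and lets the openness of the quotient map do the work of continuity of the inverse; you instead exhibit the explicit continuous section $\sigma(\varepsilon,Y)=(x_0,\varepsilon,(Y-i^+(x_0))/\varepsilon)$ and check that $\pi\circ\sigma$ is a two-sided inverse of $\widetilde{\overset{\:\circ}{q}}$. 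This is slightly more hands-on and avoids having to argue openness at all, at the cost of fixing a base point $x_0\in\intx$; both are legitimate. Second, and more substantively, you flag that the invariance under the orbit relation forces the last coordinate of the embedding to be $\log\rho(x)$ rather than $\rho(x)$, i.e.\ the $i$ in the statement must be read as $i^+(x)=(i_\partial(x),\log\rho(x))$, since it is $i^+$ (through $h$ of \eqref{hdefinition} and the rescaling \eqref{RNtan}) that governs the action on the interior part; with $i$ of \eqref{embedding} taken literally the formula is not constant on orbits. The paper's own proof silently makes the same identification (it writes the relation as $Y=X+\frac{x-y}{\varepsilon}$, an abuse of notation), so your reconciliation is a genuine clarification rather than a deviation, and it does not affect the conclusion or its use later (the identification of $\mathscr{B}_{h_{(0,1]}}$ with $(0,1]\times\RR^N$).
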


\begin{proof}
The map $$\overset{\:\circ}{q}:(x,\varepsilon, X)\mapsto (\varepsilon, \varepsilon\cdot X+i(x))$$
is obviously a continuous open surjection from $\intx\times(0,1]\times\RR^N$ to $(0,1]\times\RR^N$.
Moreover, by definition 
$$\overset{\:\circ}{q}(x,\varepsilon, X)=\overset{\:\circ}{q}(y,\varepsilon', Y)$$ if and only if $$\varepsilon=\varepsilon'\qquad and \qquad Y=X+\frac{x-y}{\varepsilon}$$
that is, if and only if $$(x,\varepsilon, X)\sim_h (y,\varepsilon', Y).$$
The conclusion follows now immediately.
\end{proof}

\subsubsection{Singular normal bundle}\label{singnbsection}
We will need to describe as well the closed complement of the open subset considered above. 

Let us consider first the normal bundle $N(X)$ over $X$ associated to the embedding $X\stackrel{\iota}{\hookrightarrow} \mathbb{R}^{N-1}\times \mathbb{R}_+$ given in (\ref{embedding}) above\footnote{Notice that $N(X)\to X$ is an honest vector bundle}. Now, we can consider the {\it Singular normal bundle} of $X$ associated to the same embedding (\ref{embedding}) which is the $C^{\infty}$-manifold of dimension $N$ whose subjacent set is
\begin{equation}
\mathscr{N}_{sing}(X):=N(X)\times \{0\}\bigsqcup \RR^{N-1}\times (0,1)
\end{equation}
and whose manifold structure we will now explicitly describe.


The structure is such that $N(X)|_{\intx}$ and $\RR^{N-1}\times (0,1)$ are two open submanifolds. We have then to describe the structure around $N(X)|_{\partial X}$: 

Let $V\subset\RR^{dim\partial X}$ be an open subset, consider 
$$W_V^-:=V\times \RR^{N-1-dim\partial X}\times (-1,0].$$
Take now, $U:=V\times (-1,1)^{N-1-dim\partial X}\subset \RR^{N-1}$ and 
consider 
$$W_V^+:=\{(a,Y,\varepsilon)\in \RR^{dim\partial X}\times\RR^{N-1-dim\partial X}\times [0,1): a+\varepsilon Y\in U\}.$$
In particular remark that $W_V^+\bigcap\RR^{dim\partial X}\times\RR^{N-1-dim\partial X}\times \{0\}=V\times\{0\}\times \{0\}$

We can consider the open subset of $\RR^N$:
$$W_V:=W_V^-\bigcup W_V^+.$$
We define charts 
\begin{equation}\label{chartW}
W_V\stackrel{\Psi}{\longrightarrow}\cW_\cV \subset \mathscr{N}_{sing}(X)
\end{equation}
around $N(X)|_{\partial X}$ by taking charts $V\stackrel{\phi}{\approx}\cV$
covering $\partial X$ and such that we have trivializations
$$V\times \RR^{N-1-dim\partial X}\times (-1,0]\approx \cV\times \RR^{N-1-dim\partial X}\times (-1,0]\approx N(X)|_{\cV\times (-1,0]}$$
Thus obtaining in this way 
\begin{equation}\label{chartW-}W_V^-\stackrel{\Psi^-}{\longrightarrow}\cW_\cV^-:=N(X)|_{\cV\times (-1,0]}.
\end{equation}
For $W_V^+$, we might suppose that $\phi$ gives a slice chart of $\partial X$ in $\RR^{N-1}$ diffeomorphic to $$U:=V\times (-1,1)^{N-1-dim\partial X}.$$ Then the $W_V^+$ are precisely the open subsets $\Omega_V^U$ considered in \cite{Ca2} Section 3. We consider the deformation to the normal cone charts explicitly described in \cite{Ca2} Section 3, proposition 3.1 (see also \cite{Concg} Section II.5).
\begin{equation}\label{chartW+}
W_V^+\stackrel{\Psi^+}{\longrightarrow}\cW_\cV^+\subset D_\partial.
\end{equation}

Locally they look like:

$$\Omega_V^U\to D_V^{U}\approx D_\cV^{\cU} \subset D_\partial$$
$$(a,Y,0)\mapsto (a,Y,0), \, and$$
$$(a,Y,\varepsilon)\mapsto (a+\varepsilon Y,\varepsilon)\, for \, \varepsilon\neq0.$$

The fact that $\{(\cW_\cV,\Psi)\}$ are compatible with $N(X)|_{\intx}$ is immediate and the fact that they are compatible with $\RR^{N-1}\times (0,1)$ follows from proposition 3.1 in \cite{Ca2}.

\begin{lemma}\label{lemmaqbord}
We have an homeomorphism between the closed subset $\mathscr{B}_{h_0}=X_\partial \times\RR^N/\sim$ of $\mathscr{B}_h$ and 
$\mathscr{N}_{sing}(X)$. More explicitly, the map
\begin{equation}\label{qpartial}
\begin{cases}
(x,\varepsilon, X)\mapsto (\varepsilon, \varepsilon\cdot X_{N-1}+i_\partial(x))\\
(x,0,X)\mapsto [X]_x\in N_x(X)
\end{cases}
\end{equation}
passes to the quotient into an homeomorphism
$$\widetilde{q_\partial}:X_\partial\times\RR^N/\sim\longrightarrow \mathscr{N}_{sing}(X)$$
\end{lemma}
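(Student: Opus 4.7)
The plan is to verify three properties of the piecewise map (\ref{qpartial}): (i) it is constant on orbits of $\sim_{h_0}$ so descends to $\widetilde{q_\partial}$, (ii) the descended map is bijective onto $\mathscr{N}_{sing}(X)$, and (iii) both $\widetilde{q_\partial}$ and its inverse are continuous. The overall scheme parallels Lemma \ref{lemmaqint}, but the additional stratum where $N(X)|_{\partial X}$ meets $\RR^{N-1}\times\{0\}$ requires extra care.

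For (i), I analyze the orbits of $(\cT_{nc}X)_{h_0}$ stratum by stratum. On $\partial X \times (0,1) \times \RR^N$, the homomorphism $h_0$ reads $(x,y,\alpha,\varepsilon) \mapsto \varepsilon^{-1}(i_\partial(x) - i_\partial(y), \alpha)$; for fixed $\varepsilon$ the first $N-1$ coordinates of $X$ translate by $\varepsilon^{-1}(i_\partial(x)-i_\partial(y))$ while the last coordinate moves freely through $\alpha$, so $(\varepsilon, \varepsilon X_{N-1}+i_\partial(x))$ is a complete orbit invariant, exactly as in Lemma \ref{lemmaqint}. At $\varepsilon=0$ the action comes from $A(h)$: on $\intx$ it is translation along $d_xi^+(T_x\intx)$, and on $\partial X$ along $d_xi_\partial(T_x\partial X)\oplus\RR$. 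In both cases the defining function $\rho$ provides a trivialization of the normal direction identifying the quotient by this subspace with the fiber $N_x(X)$, so $[X]_x\in N_x(X)$ is well-defined.

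Bijectivity (ii) follows from an explicit inverse: on $\RR^{N-1}\times(0,1)$, send $(\varepsilon, y)$ to the class of $(x_0, \varepsilon, (\varepsilon^{-1}(y-i_\partial(x_0)), 0))$ for any chosen $x_0\in\partial X$, the independence from $x_0$ being guaranteed by (i); on $N_x(X)$, lift a representative $V$ to $(x,0,V)$. The main obstacle is continuity (iii) at the transition between $N(X)|_{\partial X}$ and $\RR^{N-1}\times\{0\}$; continuity within each stratum is immediate from the formulas. At the transition I would work in the explicit charts $\Psi:W_V\to\cW_\cV$ of Section \ref{singnbsection}, in particular the deformation-to-the-normal-cone form of $\Psi^+$ given by $(a,Y,\varepsilon)\mapsto (a+\varepsilon Y, \varepsilon)$ for $\varepsilon\neq 0$ and $(a,Y,0)\mapsto (a,Y,0)$ from \cite{Ca2}, Proposition 3.1. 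Reading $\widetilde{q_\partial}$ in these local coordinates reduces the continuity statement to the fact that a sequence $(x_n, \varepsilon_n, X_n)$ with $\varepsilon_n \to 0$ and $\varepsilon_n (X_n)_{N-1} + i_\partial(x_n) \to y$ is sent to a sequence converging in the chart to the point of $N(X)|_{\partial X}$ encoded by $\lim [X_n]_{x_n}$, which is precisely the gluing built into the $C^\infty$-structure of $\mathscr{N}_{sing}(X)$. The same charts handle continuity of the inverse and thereby conclude the proof.
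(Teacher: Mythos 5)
Your proposal is correct and follows essentially the paper's route: the same stratum-by-stratum identification of the $\sim_{h_0}$-orbit invariants (with $(\varepsilon,\varepsilon X_{N-1}+i_\partial(x))$ on $\partial X\times(0,1)\times\RR^N$, exactly as in Lemma \ref{lemmaqint}) and the same use of the deformation-to-the-normal-cone charts of Section \ref{singnbsection} to treat the delicate region $N(X)|_{\partial X}$. The only presentational difference is that the paper shows the unquotiented map $q_\partial$ is an open continuous surjection whose fibers are exactly the orbits (its local form in the chart being $(x,0,\varepsilon,X)\mapsto(x,X_{N-1-\dim\partial X},\varepsilon)$ and $(x,t,0,X)\mapsto(x,X_{N-1-\dim\partial X},t)$), which is also the cleanest way to complete your asserted continuity of the inverse, since your explicit inverse with a fixed base point $x_0$ degenerates as one approaches $N(X)|_{\partial X}$ and openness in the chart (equivalently, a continuous local section) is what actually finishes that step.
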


\begin{proof}
Let us denote by $q_\partial:X_\partial \times\RR^N\to\mathscr{N}_{sing}(X)$ the map (\ref{qpartial}) above. We will show that it is an open continuous surjection. The fact that is surjective is immediate. For proving that it is open and continuous, it is enough to check that for every point 
$p\in X_\partial \times\RR^N$ there is an open neighborhood $\cV_p$ such that $q_\partial(\cV_p)$ is open and $q_\partial:\cV_p\to q_\partial(\cV_p)$ is continuous.

For points in $\partial X\times (0,1)\times \RR^N$, the restriction
$$\partial X\times (0,1)\times \RR^N\to q_\partial(\partial X\times (0,1)\times \RR^N)=(0,1)\times\RR^{N-1}$$
is given by $(x,\varepsilon, X)\mapsto (\varepsilon, \varepsilon\cdot X_{N-1}+i_\partial(x))$ which is open and continuous (the proof is completely analog to lemma \ref{lemmaqint}).

For points in $\intx\times \{0\}\times \RR^N$, the restriction
$$\intx\times \{0\}\times \RR^N\to q_\partial(\intx\times \{0\}\times \RR^N)=N(X)|_{\intx}$$
is given by $(x,0, X)\mapsto[X]_x\in N_x(X)$ which is clearly open and continuous.

Finally, for points in $\partial X\times \{0\}\times \RR^N$ we need to have more careful. Let $x_0\in \partial X$, we take a chart $V\stackrel{\phi}{\approx}\cV$ as above, together with the correspondant slice chart $U\in \RR^{N-1}$. Around a point $(x,0,X_0)\in X_\mathscr{F}\times \RR^N$ we can consider a chart (with boundary) diffeomorphic to
$$\cU:=\{(x,t,\varepsilon,X)\in V\times (-1,0]\times [0,1)\times \RR^N:
x+\varepsilon X_{N-1-dim\partial X}\in U\}.$$
Take $\cU_\partial$ the corresponding intersection with $X_\partial \times \RR^N$. In these local coordinates, the map $q_\partial$ looks like:
$$\cU\to W_V$$
given by
$$(x,0,\varepsilon, X)\mapsto (x,X_{N-1-dim\partial X},\varepsilon)\in W_V^+$$
and
$$(x,t,0,X)\mapsto(x,X_{N-1-dim\partial X},t)\in W_V^-.$$
It is evidently an open continuous map.
Hence the map $q_\partial$ is an open continuous surjection for which $q_\partial(a)=q_\partial(b)$ if and only if $a\sim_h b$ in $X_\partial \times \RR^N$. We conclude then that the induced map is an homeomorphism.
\end{proof}

\subsubsection{The APS classifying space}\label{BAPSsection}
Consider 
\begin{equation}\label{BF}
\mathscr{B}_\mathscr{F}:=\mathscr{N}_{sing}(X)\bigsqcup (0,1]\times\RR^N.
\end{equation}
By the two precedent lemmas we can conclude that there is an unique locally compact topology on $\mathscr{B}_\mathscr{F}$ such that the bijection 
$$\widetilde{q_\mathscr{F}}:=\widetilde{q_\partial}\bigsqcup \widetilde{\overset{\:\circ}{q}}:\mathscr{B}_h\longrightarrow \mathscr{B}_\mathscr{F}$$
induced by $\widetilde{q_\partial}$ and $\widetilde{\overset{\:\circ}{q}}$ is an homeomorphism.

With the new identifications, the index morphism (\ref{indexBh}) takes the following form:
\begin{equation}\label{indexBF}
\xymatrix{
K^0(\mathscr{N}_{sing}(X))&
K^0(\mathscr{B}_\mathscr{F})\ar[l]_-{(e_0)_*}^-{\approx}\ar[r]^-{(e_1)_*}& K^0(\RR^N).
}
\end{equation}
The APS index theorem proposed below, will be useful only if we can compute this index morphism. In the case $\partial X=\emptyset$ the topology of $\mathscr{B}_\mathscr{F}$ gives immediately that the above index is just the shriek map induced by an open inclusion. In the case with boundary, $\mathscr{N}_{sing}(X)$ is not quite an open subset of $\mathbb{R}^N$, but still we can prove the following by analyzing the topology of $\mathscr{B}_\mathscr{F}$:

To describe the topology of this space we will describe three big open subsets that cover $\mathscr{B}_\mathscr{F}$ and that generate the entire topology (see proposition below). Let us first define the subjacent subsets $U_\mathscr{F}^0$, $U_\mathscr{F}^1$ and $U_\mathscr{F}^2$: 
$$U_\mathscr{F}^0=\RR^N\times (0,1],$$
$$U_\mathscr{F}^1=(\RR^{N-1}\times (0,1))\bigsqcup (\RR^{N-1}\times (0,1)\times (0,1)),$$
where the first component of the disjoint union belongs to $\mathscr{N}_{sing}(X)$ and the second one is the open subset $(\RR^{N-1}\times (0,1))\times (0,1)$ of  $\RR^N\times (0,1)$. For introduce $U_\mathscr{F}^2$, we will need to chose a (good) tubular neighborhood of $X$ in $\RR^N$, that is a diffeomorphism 
\begin{equation}\label{tubf}
\xymatrix{
N(X) \ar[r]^-{f}_-\approx& W\subset\RR^{N-1}\times \RR_-\\
X\ar[u]\ar[r]^-{id}&X\ar[u]
}
\end{equation}
from $N(X)$ to $W$ an open neighborhood of $X$ (or $i_\partial (X)$ to be formal) in $\RR^N$ such that $f$ is the identity in $X$ (identifying $X$ with the zero section), and the restriction to $\partial X$ gives a tubular neighborhood  of $\partial X$ in $\RR^{N-1}$:
\begin{equation}\label{tubfb}
\xymatrix{
N(X)|_{\partial X} \ar[r]^-{f_\partial}_-\approx& W_\partial \subset\RR^{N-1}\times \{0\}\\
\partial X\ar[u]\ar[r]^-{id}&\partial X\ar[u]
}
\end{equation}
where $W_\partial= W\bigcap \RR^{N-1}\times \{0\}$. Consider the open subset of $\RR^N$ consisting of putting a collar to $W$ in the positive direction:

$$W_{sing}:=W\bigcup W_\partial \times [0,1)$$

We will introduce the subset $U_\mathscr{F}^2$ by its intersections with $\mathscr{N}_{sing}(X)$ and with $\RR^N\times (0,1]$:

$$U_\mathscr{F}^2\bigcap \mathscr{N}_{sing}(X)= N(X)\times \{0\}\bigsqcup W_\partial \times (0,1),$$
and
$$U_\mathscr{F}^2\bigcap \RR^N\times (0,1]=W_{sing}\times (0,1).$$



\begin{lemma}\label{openBF}
The subsets $U_\mathscr{F}^0$, $U_\mathscr{F}^1$ and $U_\mathscr{F}^2$ of $\mathscr{B}_\mathscr{F}$ are open.
\end{lemma}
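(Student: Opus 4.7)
My plan is to reduce openness in $\mathscr{B}_\mathscr{F}$ to openness of preimages in $X_\cF\times \RR^N$. Let me denote by $q_\mathscr{F}: X_\cF\times \RR^N \to \mathscr{B}_\mathscr{F}$ the composition of the quotient map $\pi:X_\cF\times \RR^N \to \mathscr{B}_h$ with the homeomorphism $\widetilde{q_\mathscr{F}}$. By definition $\mathscr{B}_\mathscr{F}$ carries the topology transported from $\mathscr{B}_h$, which is itself the quotient topology of the free proper action of $(\Gamma(X)_\cF)_{h_\cF}$. Since $q_\mathscr{F}$ is therefore a continuous open surjection, a subset $U\subset\mathscr{B}_\mathscr{F}$ is open if and only if $q_\mathscr{F}^{-1}(U)$ is open in $X_\cF\times \RR^N$. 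It is thus enough to exhibit for each $i=0,1,2$ an open subset $\widetilde{U}_\mathscr{F}^i\subset X_\cF\times \RR^N$ satisfying $q_\mathscr{F}^{-1}(U_\mathscr{F}^i)=\widetilde{U}_\mathscr{F}^i$.

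For $U_\mathscr{F}^0=\RR^N\times (0,1]$ the description is immediate from Lemma \ref{lemmaqint}: $q_\mathscr{F}^{-1}(U_\mathscr{F}^0)=\intx\times (0,1]\times \RR^N$, which is obviously open in $X_\cF\times \RR^N$. For $U_\mathscr{F}^1$ I would split the preimage according to the stratification of $X_\cF$. On the interior stratum $\intx\times (0,1]\times \RR^N$, the preimage is described by the open conditions $\varepsilon\in (0,1)$ together with the last coordinate of $\varepsilon X+i(x)$ lying in $(0,1)$, both of which cut out an open subset by continuity. On the $X_\partial$ stratum the preimage is all of $\partial X\times (0,1)\times \RR^N$, open in $X_\partial\times \RR^N$. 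The subtlety is to verify that these two local pieces match up at the common boundary in $X_\cF\times \RR^N$: for a convergent sequence $(x_n,\varepsilon_n,X_n)\in\intx\times (0,1)\times \RR^N$ with limit $(x_0,\varepsilon_0,X_0)\in\partial X\times (0,1)\times \RR^N$, one uses $\rho(x_0)=0$ and continuity of $\rho$ to confirm the open defining conditions persist in a neighborhood, so the union is indeed open.

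The interesting case is $U_\mathscr{F}^2$, for which the tubular neighborhood data \eqref{tubf}, \eqref{tubfb} is essential. On the interior stratum I would take the preimage to be $\{(x,\varepsilon,X)\in\intx\times (0,1)\times\RR^N : \varepsilon X+i(x)\in W_{sing}\}$, which is open by continuity of the evaluation map and openness of $W_{sing}\subset\RR^N$. On the $X_\partial$ stratum the preimage contains all of $X\times \{0\}\times \RR^N$ (because $q_\partial$ sends $(x,0,X)$ to a point of $N(X)\times\{0\}\subset U_\mathscr{F}^2$) together with the open locus $\{(x,\varepsilon,X)\in\partial X\times (0,1)\times\RR^N:\varepsilon X_{N-1}+i_\partial(x)\in W_\partial\}$. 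Each stratum piece is manifestly open. The main obstacle is to check that these pieces assemble into an open subset of $X_\cF\times \RR^N$ across the interfaces $\intx\times\{0\}$ and $\partial X\times\{0\}$. This is handled by two continuity arguments: near an interior boundary point $(x_0,0,X_0)$ with $x_0\in\intx$ one has $i(x_0)\in W\subset W_{sing}$, so any sequence converging from the $\intx\times (0,1]$-stratum satisfies $\varepsilon_n X_n+i(x_n)\in W_{sing}$ eventually; near $(x_0,0,X_0)$ with $x_0\in\partial X$ one uses that $f$ restricts to a tubular neighborhood of $\partial X$, so the collar $W_\partial\times [0,1)$ provides the compatibility between the $\intx\times (0,1]$ and $\partial X\times [0,1)$ stratum descriptions. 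With all gluings checked, each $\widetilde{U}_\mathscr{F}^i$ is open and the lemma follows.
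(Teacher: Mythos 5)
Your reduction of openness in $\mathscr{B}_\mathscr{F}$ to openness of $q_\mathscr{F}$-preimages in $X_\cF\times\RR^N$ is exactly the paper's strategy, and your treatment of $U_\mathscr{F}^0$ coincides with it. The gap is in your gluing step for $U_\mathscr{F}^1$. Writing $X=(X_{N-1},X_N)\in\RR^{N-1}\times\RR$, the set you exhibit as the preimage, namely $\partial X\times(0,1)\times\RR^N$ together with $\{(x,\varepsilon,X)\in\intx\times(0,1)\times\RR^N:\ \varepsilon X_N+\rho(x)\in(0,1)\}$, is \emph{not} open in $X_\cF\times\RR^N$, and your continuity argument does not show that it is. Take $x_0\in\partial X$, $\varepsilon_0=\tfrac12$ and $X_0$ with $X_{0,N}=10$: the point $(x_0,\varepsilon_0,X_0)$ lies in your preimage (its image lies in $\RR^{N-1}\times(0,1)\subset\mathscr{N}_{sing}(X)$), yet every nearby interior point $(x,\varepsilon,X)$ satisfies $\varepsilon X_N+\rho(x)\approx 5\notin(0,1)$ and so is excluded. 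The obstruction comes from the term $\varepsilon X_N$, which the fact $\rho(x_0)=0$ does nothing to control, so the ``open defining conditions'' do not persist. The structural reason is that the $q_\partial$-fibres over points of $\RR^{N-1}\times(0,1)$ do not constrain $X_N$ at all (the $\RR$-factor of $\partial X\times\partial X\times\RR\times(0,1)$ translates it arbitrarily), hence any open subset of $\mathscr{B}_\mathscr{F}$ meeting that stratum must contain points of $\RR^N\times(0,1)$ with arbitrarily large last coordinate. The paper's proof handles $U_\mathscr{F}^1$ differently: it takes $q_\mathscr{F}^{-1}(U_\mathscr{F}^1)=X\times(0,1)\times\RR^N$, i.e. it reads the part of $U_\mathscr{F}^1$ inside $\RR^N\times(0,1]$ as all of $\RR^N\times(0,1)$ (the saturated set), for which openness is immediate; with the literal second component $(\RR^{N-1}\times(0,1))\times(0,1)$ your preimage computation is the honest one and openness genuinely fails, so your argument cannot be completed without adjusting the set as the paper's own preimage formula implicitly does.

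The same issue is precisely the delicate point for $U_\mathscr{F}^2$, and you do not address it: you check the interfaces at $\varepsilon=0$ (where your appeals to $i(x_0)\in W\subset W_{sing}$ and to the collar $W_\partial\times[0,1)$ are fine), but not the interface along $\partial X\times(0,1)\times\RR^N$. At a point there with $i_\partial(x_0)+\varepsilon_0X_{0,N-1}\in W_\partial$ but $\varepsilon_0X_{0,N}\ge 1$, nearby interior points have $\varepsilon X+i(x)$ with last coordinate close to $\varepsilon_0X_{0,N}$, hence outside $W_{sing}=W\cup W_\partial\times[0,1)$, and they fail your interior condition. This is exactly the ``direct analysis at each point'' that the paper leaves as an exercise, and it is where the saturation phenomenon above must be confronted (either by reading $U_\mathscr{F}^2$ as a saturated set or by an additional argument); as written, your proof does not go through at these points.
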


\begin{proof}
By definition $\mathscr{B}_\mathscr{F}$ induced from the quotient topology on $\mathscr{B}_h$. Hence if we consider the map
$$q_\mathscr{F}:X_\mathscr{F}\times \RR^N\to \mathscr{B}_\mathscr{F}$$
given by $q_\mathscr{F}:=q_\partial\bigsqcup \overset{\:\circ}{q}$ (with the notations of lemmas \ref{lemmaqint} and \ref{lemmaqbord}), it is enough to observe that $q_\mathscr{F}^{-1}(V)$ is an open subset of $X_\mathscr{F}\times \RR^N$ to conclude that $V$ is open in $\mathscr{B}_\mathscr{F}$. In the case we are dealing with, we have:
\begin{itemize}
\item[$\bullet$] $q_\mathscr{F}^{-1}(U_\mathscr{F}^0)=\intx \times (0,1]\times \RR^N,$ which is clearly open in 
$X_\mathscr{F}\times \RR^N$.
\item[$\bullet$] $q_\mathscr{F}^{-1}(U_\mathscr{F}^1)=X \times (0,1)\times \RR^N,$ which is clearly open in 
$X_\mathscr{F}\times \RR^N$.
\item[$\bullet$] For the last one, an explicit description of the inverse image allows to verify it is open by a direct analysis at each point, we leave this as direct excercice computation,
$$q_\mathscr{F}^{-1}(U_\mathscr{F}^2)=
X\times \{0\}\times \RR^N$$
$$\bigsqcup\{(x,\varepsilon,X)\in \partial X\times (0,1)\times \RR^N: i_\partial(x)+\varepsilon\cdot X_{N-1}\in W_\partial \}$$
$$\bigsqcup \{(x,\varepsilon,X)\in X\times (0,1)\times\RR^N: i(x)+\varepsilon\cdot X\in W_{sing} \}$$
\end{itemize}
\end{proof}

\begin{proposition}\label{BForiented}
The locally compact space $\mathscr{B}_\mathscr{F}$ admits an oriented  $C^\infty$-manifold with boundary structure of dimension $N+1$. 
\end{proposition}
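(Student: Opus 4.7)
The strategy is to cover $\mathscr{B}_\mathscr{F}$ by the three open subsets $U_\mathscr{F}^0, U_\mathscr{F}^1, U_\mathscr{F}^2$ of Lemma~\ref{openBF}, equip each with an oriented $C^\infty$-structure as an $(N+1)$-manifold with boundary, and verify that these atlases patch into a global one. The two outer pieces are straightforward. On $U_\mathscr{F}^0 = \RR^N \times (0,1]$ I use the standard product structure, giving a smooth $(N+1)$-manifold with boundary $\RR^N \times \{1\}$, oriented by $dX_1 \wedge \cdots \wedge dX_N \wedge d\varepsilon$. For $U_\mathscr{F}^1$, the explicit formulas of Lemmas~\ref{lemmaqint}--\ref{lemmaqbord} identify it diffeomorphically with the half-space $\RR^{N-1} \times (0,1) \times [0,1)$, where $\{\varepsilon = 0\}$ corresponds to the $\RR^{N-1} \times (0,1)$ piece of $\mathscr{N}_{sing}(X)$ and $\varepsilon > 0$ to the open piece $\RR^{N-1} \times (0,1) \times (0,1)$ inside $\RR^N \times (0,1)$; the product orientation is used.

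The heart of the argument is $U_\mathscr{F}^2$. Here I would use the tubular neighborhood $f : N(X) \stackrel{\approx}{\to} W$ of~(\ref{tubf}), whose boundary restriction~(\ref{tubfb}) lands in $\RR^{N-1} \times \{0\}$, together with the collar $W_\partial \times [0,1)$, to construct a deformation-to-the-normal-cone type chart modeled on $N(X) \times [0,1)$. Concretely, define
\[
\Phi : N(X) \times [0,1) \longrightarrow U_\mathscr{F}^2
\]
by $\Phi(n, 0) = n$ (viewed in $N(X) \times \{0\} \subset \mathscr{N}_{sing}(X)$), and for $\varepsilon > 0$ by the rescaled tubular neighborhood extended through the collar in the ``positive $\rho$'' direction of $\RR^N$. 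This is essentially the DNC chart of the nonboundary case (compare Appendix~\ref{dncappendix}); checking that $\Phi$ is a diffeomorphism onto $U_\mathscr{F}^2$ and agrees with the $C^\infty$-structure of $\mathscr{N}_{sing}(X)$ along $N(X) \times \{0\}$ reduces to comparison with the local DNC charts~(\ref{chartW-})--(\ref{chartW+}).

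The final step is to check the three overlaps. The overlap $U_\mathscr{F}^0 \cap U_\mathscr{F}^2$ is handled by $f$ (and its collar extension) for $\varepsilon > 0$; the overlap $U_\mathscr{F}^0 \cap U_\mathscr{F}^1$ is an open subset of $\RR^N \times (0,1)$ whose transition is the identity. I expect the main obstacle to be the overlap $U_\mathscr{F}^1 \cap U_\mathscr{F}^2$ near $N(X)|_{\partial X}$, where the two deformation directions of $\mathscr{B}_\mathscr{F}$ interact: the DNC of $X \hookrightarrow \RR^N$ (which limits onto $N(X)$) meets the DNC of $\partial X \hookrightarrow \RR^{N-1}$ (which contributes the $\RR^{N-1} \times (0,1)$ piece). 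Here the transition map must be simultaneously smooth in $\varepsilon$ and in the normal coordinate along $\partial X$; this is where the careful choice of the embedding $i = (i_\partial, \rho)$ respecting $\rho$ as a defining function is essential, because it ensures that the two rescalings have matching linear parts in the relevant coordinates.

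Once smoothness on $U_\mathscr{F}^1 \cap U_\mathscr{F}^2$ is verified, orientation is automatic: every transition map is built from positive rescalings and smooth diffeomorphisms of Euclidean open subsets, hence orientation preserving. The resulting atlas makes $\mathscr{B}_\mathscr{F}$ an oriented $C^\infty$-manifold with boundary $\mathscr{N}_{sing}(X) \sqcup \RR^N \times \{1\}$ of dimension $N+1$, as required for the Stokes theorem argument in Proposition~\ref{Stokes}.
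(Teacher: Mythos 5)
Your overall strategy is the same as the paper's (cover $\mathscr{B}_\mathscr{F}$ by $U_\mathscr{F}^0, U_\mathscr{F}^1, U_\mathscr{F}^2$, give each an explicit chart, check transitions and signs), and your charts for $U_\mathscr{F}^0$ and $U_\mathscr{F}^1$ agree with the paper's. But the heart of the proof is exactly the piece you leave unresolved, and as stated your chart for $U_\mathscr{F}^2$ does not work. The model $N(X)\times[0,1)$ is a manifold with corners: it has the two boundary faces $N(X)\times\{0\}$ and $N(X)|_{\partial X}\times[0,1)$ meeting in the corner $N(X)|_{\partial X}\times\{0\}$. The open set $U_\mathscr{F}^2\subset\mathscr{B}_\mathscr{F}$ has no such corner: its intersection with $\mathscr{N}_{sing}(X)$ is $N(X)\times\{0\}\sqcup W_\partial\times(0,1)$, in which the former boundary $N(X)|_{\partial X}$ consists of \emph{interior} points of $\mathscr{N}_{sing}(X)$ (they are capped off by the $D_\partial$ piece). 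So $\Phi:N(X)\times[0,1)\to U_\mathscr{F}^2$ cannot be a homeomorphism onto $U_\mathscr{F}^2$; and if you instead take $\Phi$ to be the naive DNC-type map $(n,t)\mapsto (f(t\cdot n),t)$ for $t>0$, its image $N(X)\times\{0\}\sqcup W\times(0,1)$ is not open in $\mathscr{B}_\mathscr{F}$ near $N(X)|_{\partial X}\times\{0\}$, because any $\mathscr{B}_\mathscr{F}$-neighborhood of such a point contains points of $W_\partial\times(0,1)\subset\mathscr{N}_{sing}(X)$ and of $W_\partial\times[0,1)\times(0,1)\subset\RR^N\times(0,1]$ which are not in the image. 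Saying that the transition on $U_\mathscr{F}^1\cap U_\mathscr{F}^2$ "must be simultaneously smooth in $\varepsilon$ and in the normal coordinate" and attributing this to the choice $i=(i_\partial,\rho)$ names the difficulty but does not resolve it.

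The paper's resolution is precisely to enlarge the chart domain to $W_\mathscr{F}^2=W_{sing}\times[0,1)_t$ with $W_{sing}=W\cup W_\partial\times[0,1)$ (an open subset of $\RR^N$, hence no corner) and to use a \emph{shifted} rescaling on the collar: $(w_\partial,s,t)\mapsto\bigl(f_\partial((s+t)\cdot f_\partial^{-1}(w_\partial)),s,t\bigr)$ for $t>0$ and $(w_\partial,s,0)\mapsto\bigl(f_\partial(s\cdot f_\partial^{-1}(w_\partial)),s\bigr)$ at $t=0$, alongside $(w,t)\mapsto(f(t\cdot f^{-1}(w)),t)$ on $W$. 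It is the appearance of $s+t$ that simultaneously absorbs the two deformation parameters, makes the overlap with $U_\mathscr{F}^1$ the diffeomorphism $(w_\partial,s,t)\mapsto\bigl(f_\partial((s+t)\cdot f_\partial^{-1}(w_\partial)),t,s\bigr)$, and lets one read off positivity of the Jacobians (your remark that orientation is "automatic" also needs this explicit form, since the transposition of $s$ and $t$ by itself is orientation-reversing and must be compensated). Without this enlarged model and explicit formula, the smooth structure of $\mathscr{B}_\mathscr{F}$ near $N(X)|_{\partial X}\times\{0\}$ -- the only genuinely new point compared with the boundaryless case -- is not established, so your argument has a real gap at its central step.
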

\begin{proof}
We will cover $\mathscr{B}_\mathscr{F}$ with three explicit charts by using the three open subsets of $\mathscr{B}_\mathscr{F}$, $U_\mathscr{F}^0$, $U_\mathscr{F}^1$ and $U_\mathscr{F}^2$ of the lemma above. 

\vspace{2mm}

{\bf Chart $(U_\mathscr{F}^0, \Psi_{\mathscr{F}}^{0})$:} 
We let $W_\mathscr{F}^0=\RR^N\times (0,1]$ and $\Psi_{\mathscr{F}}^{0}$ to be the identity:
$$W_\mathscr{F}^0\stackrel{\Psi_{\mathscr{F}}^{0}=id}{\longrightarrow} U_\mathscr{F}^0.$$

{\bf Chart $(U_\mathscr{F}^1, \Psi_{\mathscr{F}}^{1})$:} 
We let $W_\mathscr{F}^1=\RR^{N-1}\times [0,1)_t\times (0,1)_s$ and $\Psi_{\mathscr{F}}^{1}$, 
$$W_\mathscr{F}^1\stackrel{\Psi_{\mathscr{F}}^{1}}{\longrightarrow} U_\mathscr{F}^1$$
defined by
\begin{equation}
\begin{cases}
(X_{N-1},0,s)\mapsto (X_{N-1},s)\in \RR^{N-1}\times (0,1)\subset \mathscr{N}_{sing}\\
(X_{N-1},t,s)\mapsto (X_{N-1},s,t)\in (\RR^{N-1}\times (0,1))\times (0,1)\subset \RR^N\times (0,1].
\end{cases}
\end{equation}

{\bf Chart $(U_\mathscr{F}^2, \Psi_{\mathscr{F}}^{2})$:} 
We let $W_\mathscr{F}^2=W_{sing}\times [0,1)_t$ and $\Psi_{\mathscr{F}}^{2}$, 
$$W_\mathscr{F}^2\stackrel{\Psi_{\mathscr{F}}^{2}}{\longrightarrow} U_\mathscr{F}^2$$
defined by
\begin{equation}
\begin{cases}
(w,0)\in W\mapsto (f^{-1}(w),0)\in N(X)\times \{0\}\subset \mathscr{N}_{sing}\\
(w_\partial,s,0)\mapsto (f_\partial(s\cdot f_{\partial}^{-1}(w_\partial)),s)\in W_\partial \times (0,1) \subset \mathscr{N}_{sing}.
\end{cases}
\end{equation}
if $t=0$, and by
\begin{equation}
\begin{cases}
(w,t)\mapsto (f(t\cdot f^{-1}(w)),t)\in W\times (0,1)\subset \RR^N\times (0,1] \\
(w_\partial,s,t)\mapsto (f_\partial((s+t)\cdot f_{\partial}^{-1}(w_\partial)),s,t)\in (W_\partial\times [0,1))\times (0,1)\subset \RR^N\times (0,1].
\end{cases}
\end{equation}
for $t\neq 0$.

We will check now the compatibility of the charts, together with the fact that the changes of coordinates have positive sign:
\begin{itemize}
\item[$\bullet$] $((\Psi_{\mathscr{F}}^{0})^{-1}\circ \Psi_{\mathscr{F}}^{1})$: 
This is the easiest case, indeed we have that
$$(\Psi_{\mathscr{F}}^{1})^{-1}(U_\mathscr{F}^0\bigcap U_\mathscr{F}^1)=(\RR^{N-1}\times (0,1))\times (0,1),$$ $$(\Psi_{\mathscr{F}}^{0})^{-1}(U_\mathscr{F}^0\bigcap U_\mathscr{F}^1)=(\RR^{N-1}\times (0,1))\times (0,1)$$ and
$$
\xymatrix{
(\Psi_{\mathscr{F}}^{1})^{-1}(U_\mathscr{F}^0\bigcap U_\mathscr{F}^1)\ar[rr]^-{(\Psi_{\mathscr{F}}^{0})^{-1}\circ \Psi_{\mathscr{F}}^{1}}&&(\Psi_{\mathscr{F}}^{0})^{-1}(U_\mathscr{F}^0\bigcap U_\mathscr{F}^1)
}
$$
is the identity.

\item[$\bullet$] $((\Psi_{\mathscr{F}}^{0})^{-1}\circ \Psi_{\mathscr{F}}^{2})$: 

$$(\Psi_{\mathscr{F}}^{2})^{-1}(U_\mathscr{F}^0\bigcap U_\mathscr{F}^2)=W_{sing}\times (0,1),$$ $$(\Psi_{\mathscr{F}}^{0})^{-1}(U_\mathscr{F}^0\bigcap U_\mathscr{F}^2)=W_{sing}\times (0,1)$$ and
$$
\xymatrix{
(\Psi_{\mathscr{F}}^{2})^{-1}(U_\mathscr{F}^0\bigcap U_\mathscr{F}^2)\ar[rr]^-{(\Psi_{\mathscr{F}}^{0})^{-1}\circ \Psi_{\mathscr{F}}^{2}}&&(\Psi_{\mathscr{F}}^{0})^{-1}(U_\mathscr{F}^0\bigcap U_\mathscr{F}^2)
}
$$
is given by
$$
\begin{cases}
(w,t)\mapsto (f(t\cdot f^{-1}(w)),t)
\\
(w_\partial,s,t)\mapsto (f_\partial((s+t)\cdot f_\partial^{-1}(w_\partial)),s,t)
\end{cases}
$$
which is evidently a diffeomorphism with positive determinant\footnote{It is a diffeomorphism whose linear representation is of the form $A\cdot\lambda \cdot A^{-1}$ with positive $\lambda$.}.

\item[$\bullet$] $((\Psi_{\mathscr{F}}^{1})^{-1}\circ \Psi_{\mathscr{F}}^{2})$: 

$$(\Psi_{\mathscr{F}}^{2})^{-1}(U_\mathscr{F}^1\bigcap U_\mathscr{F}^2)=W_\partial \times (0,1)\times [0,1),$$ 
$$(\Psi_{\mathscr{F}}^{1})^{-1}(U_\mathscr{F}^1\bigcap U_\mathscr{F}^2)=W_\partial \times [0,1)\times (0,1)$$ and
$$
\xymatrix{
(\Psi_{\mathscr{F}}^{2})^{-1}(U_\mathscr{F}^1\bigcap U_\mathscr{F}^2)\ar[rr]^-{(\Psi_{\mathscr{F}}^{1})^{-1}\circ \Psi_{\mathscr{F}}^{2}}&&(\Psi_{\mathscr{F}}^{1})^{-1}(U_\mathscr{F}^1\bigcap U_\mathscr{F}^2)
}
$$
is given by
$$(w_\partial,s,t)\mapsto (f_\partial((s+t)\cdot f_\partial^{-1}(w_\partial)),t,s)$$
which is evidently a diffeomorphism with positive determinant\footnote{same as previous footnote.}.
\end{itemize}

\end{proof}

In particular, we have obtained an oriented cobordism $\mathscr{B}_\mathscr{F}$ from $\mathscr{N}_{sing}(X)$ to $\RR^N$. From now on, we orient $\mathscr{B}_\mathscr{F}$ such that the induced orientation on the boundary is $$\partial \mathscr{B}_\mathscr{F}=-\Nb_{sing}(X)\bigcup \RR^N.$$
We can hence apply a Stoke's theorem argument to obtain the following proposition:

\begin{proposition}\label{Stokes}
The following diagram is commutative
$$
\xymatrix{
K^0(\mathscr{N}_{sing}(X))\ar@/^2pc/[rr]^-{ind_{\mathscr{B}_\mathscr{F}}}\ar[rd]_{\int_{\mathscr{N}_{sing}(X)}ch(\cdot)}&
K^0(\mathscr{B}_\mathscr{F})\ar[l]_-{(e_0)_*}^-{\approx}\ar[r]^-{(e_1)_*}& K^0(\mathbb{R}^N)\ar[dl]^{\int_{\RR^N}ch(\cdot)}\\
&\RR&
}
$$
\end{proposition}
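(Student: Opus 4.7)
The approach is to exploit the cobordism structure of $\mathscr{B}_\mathscr{F}$ provided by Proposition \ref{BForiented}: since $\mathscr{N}_{sing}(X)$ and $\RR^N$ appear as the two boundary components with opposite induced orientations, Stokes' theorem applied to any closed compactly supported form on $\mathscr{B}_\mathscr{F}$ will identify the two boundary integrals, and this is precisely the content of the commutative diagram.

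First, for a class $\xi \in K^0(\mathscr{B}_\mathscr{F})$, I would pick a closed compactly supported differential form $\omega$ representing the Chern character $ch(\xi)$ on $\mathscr{B}_\mathscr{F}$; such a representative exists since the Chern character on $K$-theory of a locally compact space takes values in compactly supported de Rham cohomology. Next, by naturality of $ch$ with respect to the boundary inclusions $j_0:\mathscr{N}_{sing}(X) \hookrightarrow \mathscr{B}_\mathscr{F}$ and $j_1:\RR^N \hookrightarrow \mathscr{B}_\mathscr{F}$, which are precisely the maps inducing $(e_0)_*$ and $(e_1)_*$ in $K$-theory, I would identify $ch((e_0)_*\xi) = [j_0^*\omega]$ and $ch((e_1)_*\xi) = [j_1^*\omega]$. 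Then Stokes' theorem on the oriented $(N+1)$-dimensional manifold with boundary $\mathscr{B}_\mathscr{F}$ gives
\[
0 \;=\; \int_{\mathscr{B}_\mathscr{F}} d\omega \;=\; \int_{\partial \mathscr{B}_\mathscr{F}} \omega \;=\; -\int_{\mathscr{N}_{sing}(X)} j_0^* \omega \;+\; \int_{\RR^N} j_1^* \omega,
\]
which, combined with the previous identifications and the definition $ind_{\mathscr{B}_\mathscr{F}}=(e_1)_*\circ (e_0)_*^{-1}$ (noting that $(e_0)_*$ is an isomorphism, so every class in $K^0(\mathscr{N}_{sing}(X))$ lifts to some $\xi$), yields the required commutativity.

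The only delicate point, and what I expect to be the main technical obstacle, is the compact-support issue: on the non-compact manifold $\mathscr{B}_\mathscr{F}$ one must ensure that $ch(\xi)$ is represented by a form whose support is compact (so that Stokes applies and the boundary integrals are finite), and that this compactness survives restriction to the two boundary components. Both facts are standard once one identifies $K^0$ of a locally compact space with compactly supported $K$-theory, equivalently the reduced $K$-theory of the one-point compactification; but carefully matching this with the fundamental-class integration $\int_{\mathscr{N}_{sing}(X)}$ that appears in the statement is the bookkeeping that makes the otherwise formal Stokes argument go through. Once this is done, Proposition \ref{BForiented} provides the genuine geometric input and the rest is automatic.
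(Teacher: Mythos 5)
Your argument is correct and follows essentially the same route as the paper: identify $(e_0)_*,(e_1)_*$ with restriction along the two boundary inclusions, use naturality of the Chern character, and apply Stokes' theorem on the oriented cobordism $\mathscr{B}_\mathscr{F}$ with $\partial\mathscr{B}_\mathscr{F}=-\mathscr{N}_{sing}(X)\cup\RR^N$. Your extra attention to the compact-support bookkeeping is a reasonable refinement of what the paper leaves implicit, but it does not change the argument.
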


\begin{proof}
By definition, the algebra morphisms $e_0:C_0(\mathscr{B}_{\mathscr{F}})\to C_0(\Nb_{sing}(X))$  and $e_1:C_0(\mathscr{B}_{\mathscr{F}})\to C_0(\RR^N)$ are induced by the closed embeddings $i_0:\Nb_{sing}(X)\hookrightarrow \mathscr{B}_{\mathscr{F}}$ and $i_1:\RR^N\hookrightarrow \mathscr{B}_{\mathscr{F}}$ respectively.
The Chern character being natural we have that the following diagram is commutative 
\begin{equation} \xymatrix{ K^0(\mathscr{N}_{sing}(X))\ar[d]_{ch}& K^0(\mathscr{B}_\mathscr{F})\ar[d]_{ch}\ar[l]_-{(e_0)_*}^-{\approx}\ar[r]^-{(e_1)_*}& K^0(\mathbb{R}^N)\ar[d]_{ch}\\ H^{ev}_{dR}(\mathscr{N}_{sing}(X))& H^{ev}_{dR}(\mathscr{B}_\mathscr{F})\ar[l]^-{(i_0)_*}\ar[r]_-{(i_1)_*}& H^{ev}_{dR}(\mathbb{R}^N) } \end{equation}
The result now follows from Stoke's theorem, indeed, for any $\omega$ $N$-closed differential form on $\mathscr{B}_\mathscr{F}$ with compact support, we have by Stoke's that
$$\int_{\partial \mathscr{B}_\mathscr{F}}\omega=O$$
and hence $\int_{\Nb_{sing}(X)}\omega=\int_{\RR^N}\omega$.
\end{proof}

\section{Atiyah-Patodi-Singer theorem in K-theory}
\subsection{The Fredholm index morphism}\label{fim}

Deformation groupoids induce index morphisms. The groupoid $\Gamma(X)_{\cF}$ is parametrized by the closed interval $[0,1]$. Its algebra comes 
equipped with evaluations to the algebra of $\mathcal{T}_{nc}X$ (at t=0) and to the algebra of $\intx\times \intx$ (for $t\neq 0$). We have a short exact sequence of $C^*$-algebras
\begin{equation}\label{btangentsuite}
0\to C^*(\intx\times \intx\times (0,1])\to C^*(\Gamma(X)_{\cF})\to
C^*(\mathcal{T}_{nc}X)\to 0
\end{equation}
where the algebra $C^*(\intx\times \intx\times (0,1])$ is contractible. Hence applying the $K$-theory functor to this sequence we obtain an index morphism
$$ind_{\cF}=(e_1)_*\circ(e_0)_{*}^{-1}:K_0(C^*(\mathcal{T}_{nc}X))\longrightarrow
K_0(C^*(\intx\times \intx))\approx \mathbb{Z}.$$

\begin{proposition}\label{FAPS}\cite{LMNpdo,Les09,DLR}.
For any fully elliptic operator $D$ on $X$, there is a "noncommutative symbol" $[\sigma_D]\in K_0(C^*(\mathcal{T}_{nc}X))$ and 
\begin{equation}
ind_{\cF}(\sigma_D)= \mathrm{Index}_{APS}(D)
\end{equation}
\end{proposition}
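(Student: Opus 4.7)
The plan is to encode a fully elliptic $b$-pseudodifferential operator $D$ as a $K$-theory class $[\sigma_D] \in K_0(C^*(\mathcal{T}_{nc}X))$ that admits a lift to $K_0(C^*(\Gamma(X)_{\cF}))$, and then read off the Fredholm index by evaluating this lift at $t=1$ via the exact sequence \eqref{btangentsuite}.

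First, I would invoke the groupoid pseudodifferential calculus of \cite{MP,NWX,LMNpdo,VassoutJFA}. Zeroth order $b$-pseudodifferential operators sit in an exact sequence
\begin{equation*}
0 \to C^*(\intx \times \intx) \to \overline{\Psi^0}(\Gamma(X)) \to \Sigma_{\Gamma(X)} \to 0,
\end{equation*}
where $\Sigma_{\Gamma(X)}$ is the joint (principal plus indicial) symbol algebra and $C^*(\intx\times\intx) \cong \mathscr{K}(L^2(\intx))$. Full ellipticity of $D$ amounts to invertibility of its image in $\Sigma_{\Gamma(X)}$, equivalently to Fredholmness of $D$ on $L^2(\intx)$, the index being the boundary map in $K$-theory of the above sequence. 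Following \cite{LMNpdo,Les09,DLR}, one exhibits a $KK$-equivalence between $\Sigma_{\Gamma(X)}$ and $C^*(\mathcal{T}_{nc}X)$, under which the invertible symbol of $D$ produces the noncommutative symbol class $[\sigma_D] \in K_0(C^*(\mathcal{T}_{nc}X))$.

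Next, to identify the Fredholm index with $ind_{\cF}$, I would lift $D$ together with a parametrix $Q$ to the Fredholm tangent groupoid $\Gamma(X)_{\cF}$. Its continuous family groupoid structure supports a pseudodifferential calculus, providing an order zero operator $\widetilde{D} \in \overline{\Psi^0}(\Gamma(X)_{\cF})$ whose restriction at $t = 1$ is $D$ (viewed in $\overline{\Psi^0}(\intx\times\intx)$) and whose restriction at $t = 0$ is the canonical elliptic model on $\mathcal{T}_{nc}X$ built from the principal and indicial data of $D$. Full ellipticity of $D$ ensures that $\widetilde{D}$ is invertible modulo $C^*(\Gamma(X)_{\cF})$, yielding an analytic index class $[\widetilde{D}] \in K_0(C^*(\Gamma(X)_{\cF}))$ with $(e_0)_*[\widetilde{D}] = [\sigma_D]$ and $(e_1)_*[\widetilde{D}] = \mathrm{Index}_{APS}(D)$ under the identification $K_0(\mathscr{K}(L^2(\intx))) \cong \mathbb{Z}$. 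Applying the definition $ind_{\cF} = (e_1)_* \circ (e_0)_*^{-1}$ then yields the asserted equality.

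The main obstacle is the construction in the first step: one must produce $[\sigma_D]$ as an explicit element of $K_0(C^*(\mathcal{T}_{nc}X))$ (not merely as an abstract $KK$-class) and verify that the identification with the joint symbol algebra is compatible with the deformation encoded by $\Gamma(X)_{\cF}$. This requires a careful analysis of the indicial family and of the topology of $\mathcal{T}_{nc}X$ near $\partial X$, and is precisely the content of \cite{LMNpdo,Les09,DLR}. Once this functoriality is secured, the remaining identifications are naturality statements for deformation index morphisms coming from the tangent groupoid construction, and the claimed equality $ind_{\cF}([\sigma_D]) = \mathrm{Index}_{APS}(D)$ follows.
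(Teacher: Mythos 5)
Your second step is, in essence, the paper's own argument: one lifts $D$ together with a full parametrix $Q$ to elliptic operators $\widetilde{D},\widetilde{Q}$ on $\Gamma(X)^{tan}$, observes that the exact boundary invertibility at $t=1$ makes the remainders act as compact morphisms over $C^*(\Gamma(X)_{\cF})$, obtains a class $[\widetilde{D}]\in K_0(C^*(\Gamma(X)_{\cF}))$, and computes $ind_{\cF}$ of its evaluation at $t=0$ as the evaluation at $t=1$, identified with the Fredholm index of $D$ on $L^2_b$ through the Morita equivalence between $C^*(\intx\times\intx)$ and $\CC$. That part is sound and coincides with the proof in the text.

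The genuine gap is in your first step, which you yourself make load-bearing. You define $[\sigma_D]$ through a claimed $KK$-equivalence between the joint symbol algebra $\Sigma_{\Gamma(X)}=\overline{\Psi^0}(\Gamma(X))/C^*(\intx\times\intx)$ and $C^*(\mathcal{T}_{nc}X)$. No such $KK$-equivalence is available, and it is not what \cite{LMNpdo,Les09,DLR} establish: already in the boundaryless case the analogous claim would be that $C(S^*M)$ is $KK$-equivalent to $C_0(T^*M)$, which fails (for $M=S^1$ the $K$-groups are $\ZZ^2$ versus $\ZZ$); what actually exists is a map from invertible joint symbols, i.e.\ from $K_1(\Sigma_{\Gamma(X)})$, to $K_0(C^*(\mathcal{T}_{nc}X))$ compatible with the index morphisms, and setting up that compatibility is exactly the nontrivial content. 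Consequently the key identity $(e_0)_*[\widetilde{D}]=[\sigma_D]$, with $[\sigma_D]$ defined as you propose, is asserted rather than proved -- you explicitly defer it to the references -- so the claimed equality $ind_{\cF}([\sigma_D])=\mathrm{Index}_{APS}(D)$ is not established by your argument. The paper avoids this entirely by \emph{defining} the noncommutative symbol from the lift: $\sigma_D:=[\sigma_{nc}(\widetilde{D})]=(e_0)_*[\widetilde{D}]$, identified via the Poincar\'e duality of \cite{DL09,Les09} with the $K$-homology class $[D]\in K_0(X^c)$, whose analytic index is $\mathrm{Index}_{APS}(D)$; with that definition the compatibility you need holds by construction and your second step finishes the proof. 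Either adopt that definition (making your first step unnecessary), or supply a proof of the compatibility between the symbol-algebra picture and the deformation picture instead of the $KK$-equivalence claim.
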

\begin{proof} 
For the sake of completeness, we briefly explain the proof. Let $P\in\Psi^0(\Gamma(X);E,F)$ be a zero-order fully elliptic $b$-operator. Here $E,F$ are hermitian bundles on $X$,
pulled-back to $\Gamma(X)$ with the target map. Let $Q\in\Psi^0(\Gamma(X);F,E)$ be a full parametrix of $P$. This means:
\begin{equation}\label{int-par-cond}
 PQ-1\in\Psi^{-1}(\Gamma(X);F,F),\quad QP-1\in\Psi^{-1}(\Gamma(X);E,E)
\end{equation}
and that, moreover, 
\begin{equation}\label{bound-par-cond}
P_\partial Q_\partial = 1,\quad  Q_\partial P_\partial  = 1  
\end{equation}
where we have denoted by $A_\partial$ the $\partial X\times\partial X\times
\RR$-pseudodifferential operator obtained by restriction of any $\Gamma(X)$-operator $A$. 
The equations (\ref{int-par-cond}) reflect the interior ellipticity while (\ref{bound-par-cond}) reflect the boundary ellipticity. 
It is well known that $ P : L^2_b(X,E)\longrightarrow L^2_b(X,F)$ is bounded and Fredholm \cite{Mel,Mont} where the hermitian structure of $E,F$ as bundles
over $X$ is used together with the measure on $X$ associated with a $b$-metric, and 
 $$\mathrm{Index}_{APS}(P) = \mathrm{Index}_{\mathrm{Fred}}(P)= \mathrm{dim} \ker P-\mathrm{dim}\mathrm{coker}P.$$
This can also be recovered as the analytical index of a $K$-homology class of a compact space. 
Let $X^c=X/\partial X$ be the conical space
associated with $X$ and we note $\pi$ the canonical projection map. We represent $C(X^c)$ into $L^2_b(X,G)$, where $G$ is any hermitian bundle over $X$, as
follows:
 $$ f\in C(X^c),\quad \xi\in L^2_b(X,G); \quad \forall y\in X,\quad m(f)(\xi)(y)=f(\pi(y))\xi(y).$$
It is immediate to check that
\begin{equation}
 (P)= \left( L^2_b(X,E\oplus F); m ; \begin{pmatrix} 0 & Q\\ P& 0 \end{pmatrix} \right)
\end{equation}
is a Kasparov $(C(X^c),\CC)$-module and that the resulting $K$-homology class $[P]\in K_0(X^c)$ does not depend on the choices of the parametrix $Q$ nor the
particular $b$-metric. Then $\mathrm{Index}_{\mathrm{APS}}(P)=\mathrm{Index}_{\mathrm{ana}}(P)=p_*([P])$ where $p : X^c\to\{\mathrm{point}\}$. 

We define $\sigma_P\in K_0(C^*(\mathcal{T}_{nc}X))$ to be the Poincar\'e dual class of $P$. Let us describe this element more explicitly. 

 Let  $\widetilde{P}$,  $\widetilde{Q}$ be any  elliptic operators on $\Gamma(X)^{tan}$ such that $\widetilde{P}|_{t=1} = P$, $\widetilde{Q}|_{t=1} = Q$ 
and:
 $$\widetilde{P} \widetilde{Q}-1, \widetilde{Q}\widetilde{P}-1\in\Psi^{-1}(\Gamma(X)^{tan}). $$
We have by construction:
$$
(\widetilde{P}_{t=1})_{\partial}(\widetilde{Q}_{t=1})_{\partial}-1=0=(\widetilde{Q}_{t=1})_{\partial}(\widetilde{P}_{t=1})_{\partial}-1,$$
Hence:
\begin{equation}
 (\widetilde{P})=\left( C^*(\Gamma(X)_{\mathcal{F}},E\oplus F),1, \begin{pmatrix} 0 & \widetilde{Q}\\ \widetilde{P} & 0 \end{pmatrix} \right)
\end{equation}
is a Kasparov $(\CC,C^*(\Gamma(X)_{\mathcal{F}}))$-module, and the restriction: 
\begin{equation}
 \sigma_{nc}(\widetilde{P}): =\widetilde{P}|_{\mathcal{T}_{nc}X}
\end{equation}
 provides a Kasparov $(\CC,C^*(\mathcal{T}_{nc}X))$-module:
 \begin{equation}
 (\sigma_{nc}(\widetilde{P})): =\left(C^*(\mathcal{T}_{nc}X,E\oplus F),1, \begin{pmatrix} 0 & \sigma_{nc}(\widetilde{Q}) \\
\sigma_{nc}(\widetilde{P})  & 0 \end{pmatrix} \right).
\end{equation}
Denoting by $e_0$  the $*$-homomorphism $C^*(\Gamma(X)_{\mathcal{F}})\to C^*(\mathcal{T}_{nc}X)$, the previous class satisfies:
\begin{equation}
 (e_0)_* [\widetilde{P}]=[\sigma_{nc}(\widetilde{P})] \in K_0(C^*(\mathcal{T}_{nc}X))
\end{equation}
By construction the Poincar\'e duality isomorphism \cite{DL09,Les09} sends $[P]$ to $[\sigma_{nc}(\widetilde{P})]$, and we thus set:
 $$ \sigma_P = [\sigma_{nc}(\widetilde{P})]$$
Now, $\mu_0$ denoting the Morita equivalence $\overset{\circ}{X}\times\overset{\circ}{X}\sim  \mathrm{point}$, we compute: 
\begin{eqnarray}
 ind_{\cF}(\sigma_P)& = & \mu_0\circ (e_1)_*\circ (e_0)_*^{-1}([\sigma_{nc}(\widetilde{P})] \\
  &=& \mu_0\circ(e_1)_*(\widetilde{P})\\
  &=& \mu_0\left( C^*(\overset{\circ}{X}\times\overset{\circ}{X},E\oplus F),1, \begin{pmatrix} 0 & Q\\ P& 0 \end{pmatrix}\right)\\
  &=& \left( L^2_b(X,E\oplus F),1, \begin{pmatrix} 0 & Q\\ P& 0 \end{pmatrix}\right)\\
   &=& \mathrm{Index}_{\mathrm{ana}}(P)=p_*([P])=\mathrm{Index}_{\mathrm{APS}}(P)
\end{eqnarray}
\end{proof}
It is also interesting to manage geometric operators (for instance, Dirac type operators  on $X$ equipped with an exact $b$-metric $g_b$) instead of abstract $0$-order pseudodifferential operators. Under appropriate assumptions, they also give rise to $K$-homology classes of $X^c$ and thus one may look for a {\sl geometric} representative of their Poincar\'e dual class in $K_0( C^*(\mathcal{T}_{nc}X))$. 

Rather than dealing exaclty with Dirac operators on $(X,g_b)$, we shall consider the following class of differential operators on $X$ containing them. 
Let $E$ be a smooth hermitian vector bundle over $X$ endowed with an orthogonal decomposition $E=E_0\oplus E_1$ and an isomorphism $ U : E|_{\partial X\times (0,\varepsilon)}\to (p_1)^*E_\partial \oplus (p_1)^*E_\partial $ where $p_1$ is the first projection subordonated to a collar identification near the boundary and $E_\partial=E_0|_{\partial X}$. 
Then we consider first order elliptic differential operators $D$, which have, after conjugation by $U$, the following expression near the boundary:
\begin{equation}\label{model-dirac-boundary}
U  D U^{-1}= \begin{pmatrix} 0 & -x\frac{\partial}{\partial x} + S \\ x\frac{\partial}{\partial x} + S & 0 \end{pmatrix}= \begin{pmatrix} 0 & D_- \\ D_+ & 0 \end{pmatrix}
\end{equation}
where $S\in\mathrm{Diff}^1(\partial X,E_\partial)$. We require $S$ to be elliptic, symmetric, and  to simplify independant of $x<\varepsilon$.
It follows from these assumptions that $D^2+1$ is invertible, as a linear map, on $C^\infty(X,E)$ \cite{Mel} and since $\Psi(\Gamma(X))$ is spectrally invariant (it is understood that the appropriate Schwartz algebra is added in the calculus \cite{lmn}), we have $(D^2+1)^{-1/2}\in \Psi^{-1}(\Gamma(X))$. Now, we moreover require the invertibility of $S$, which is equivalent here to the full ellipticity of $D$. Observe that this assumption is not sufficient to let $D$ into an unbounded $(C(X^c),\CC)$-Kasparov module in the sense of \cite{bj}, but it nevertheless implies that $W(D):=D(D^2+1)^{-1/2}\in \Psi^{0}(\Gamma(X),E)$ is fully elliptic since the indicial family map $I : \Psi(\Gamma(X))\to \Psi(\partial X,\RR)$ is a homomorphism of algebras:
 $$ I(W(D),\tau) = \begin{pmatrix} 0 & \frac{S-i\tau}{(S^2+\tau^2+1)^{1/2}} \\  \frac{S+i\tau}{(S^2+\tau^2+1)^{1/2}} & 0 \end{pmatrix}= \begin{pmatrix} 0 &  W(D)_- \\  W(D)_+ & 0 \end{pmatrix}. $$
Thus one can already associate to $D$ the (bounded) $K$-homology class $[W(D)_+]$, with Poincar\'e dual given by the (bounded) $K$-theory class $\sigma_{W(D)_+}$ as above. Alternatively, we may look for a more geometric representative of these classes.


For that purpose, we define a lift
$\widetilde{D}\in\Psi(\Gamma(X)^{tan})$ of $D$ as follows. Let $s$ be
the complete symbol of $S$ with respect to the exponential map of the
metric on the boundary (see \cite{getzler}), and $d$ be the complete symbol of $D$ with respect to the exponential map of the metric on $X$.  We rescale $s$ and $d$ as follows: 
$$ \forall 0<\varepsilon\le 1, \forall (x,\xi)\in T^*\partial X,\quad s_{\ad}(y,\xi)=s(y,\varepsilon \xi) \text{ and } \forall (z,\zeta)\in T^*X,\quad d_{\ad}(z,\zeta)=d(z,\varepsilon \zeta)$$      
Setting $S_{\ad}=s_{\ad}(y,D_y)$, $D_{\ad}=d_{\ad}(z,D_z)$ and using positive functions $\varphi,\psi\in C^\infty(X)$ such that $\varphi+\psi=1$ and $\varphi=1$ if $x<\varepsilon$, 
 $\varphi=0$ if $x\ge 2\varepsilon$, we let 
$$\widetilde{D}|_{x\varepsilon>0}= \varphi U^{-1}\begin{pmatrix} 0 & -\varepsilon x\frac{\partial}{\partial x} + S_{\ad} \\ \varepsilon x\frac{\partial}{\partial x} + S_{\ad} & 0 \end{pmatrix}U + \psi D_{\ad},$$ 
$$\widetilde{D}|_{x=0,\varepsilon>0}= U^{-1}\begin{pmatrix} 0 & -\frac{\partial}{\partial \lambda} + S_{\ad} \\ \frac{\partial}{\partial \lambda} + S_{\ad} & 0 \end{pmatrix}U,$$ 
and 
$$\widetilde{D}|_{\varepsilon=0} = \varphi U^{-1}\begin{pmatrix} 0 & -\frac{\partial}{\partial \lambda} + s(y,D_Y) \\ \frac{\partial}{\partial \lambda} + s(y,D_Y) & 0 \end{pmatrix}U + \psi d(z,D_Z),$$ 
where $D_Y,D_Z$ stand for the differentiation in the fibers coordinates of $T\partial X$ and $TX$ respectively. 

On the other hand, we can also rescale $\widetilde{D}|_{x=0,\varepsilon>0}$ into:
$$ \sigma_{\partial-\ubd}(D)= U^{-1}\begin{pmatrix} 0 & -\frac{\partial}{\partial \lambda} + \frac1{1-\varepsilon} S_{\ad} \\ \frac{\partial}{\partial \lambda} + \frac1{1-\varepsilon}S_{\ad} & 0 \end{pmatrix}U$$
and define  $\sigma_{\ubd}(D)\in \mathrm{Diff}^1(\mathcal{T}_{nc}X)$ by 
$$\sigma_{\ubd}(D) = \sigma_{\partial-\ubd}(D) \text{ if } x=0 \text{ and } \sigma_{\ubd}(D) =\widetilde{D}|_{\varepsilon=0}\text{ if } x>0 $$ 
We also note $\widetilde{E}$  the pull back of $E$ for the map  $X\times [0,1]\overset{p_1}{\longrightarrow} X$ and then $E_{\ad}$ its restriction to $\{ x\varepsilon=0\}$. By construction, 
we get an elliptic symmetric element $\sigma_{\ubd}(D)\in \mathrm{Diff}^1(\mathcal{T}_{nc}X,r^*E_{\ad})$, whose closure $\overline{\sigma_{\ubd}(D)}$ as an unbounded operator on the Hilbert module $\cE:=C^*(\mathcal{T}_{nc}X,r^*E_{\ad})$ with domain $C^{\infty,0}(\mathcal{T}_{nc}X,r^*E_{\ad})$ is a regular selfadjoint operator \cite{bj}. To prove that assertion, we can not directly apply proposition 3.6.2 and lemma 3.6.3 in \cite{VassoutJFA} since the unit space of $\mathcal{T}_{nc}X$ is not compact. Nevertheless, we can pick up a suitable parametrix $q\in \Psi^{-1}(\overline{\mathcal{T}_{nc}X},r^*E_{\ad})$ of  $\sigma_{\ubd}(D)$, where  $\overline{\mathcal{T}_{nc}X}:=\Gamma(X)^{tan}|_{x\varepsilon=0}$, in such a way that the proofs given in \cite{VassoutJFA} apply verbatim : we define $q$ by combining, using cut-off functions, a parametrix given by the inversion of the principal symbol of $\sigma_{\ubd}(D)$  with the true inverse of $ \sigma_{\partial-\ubd}(D)|_\varepsilon$ when $1-\alpha<\varepsilon<1$ and extended by $0$ at $\varepsilon=1$. Then, we have by construction:
$$\sigma_{\ubd}(D)q=1+k_1,\ q\sigma_{\ubd}(D)=1+k_2  \text{ with } q,k_i\in \Psi^{-1}(\overline{\mathcal{T}_{nc}X},r^*E_{\ad}),\ q|_{\varepsilon=1}=0,\ k_i|_{\varepsilon=1}=0.$$
The first of the two conditions on  the operators $q,k_1,k_2$ implies that they extend into compact morphisms on $C^*(\overline{\mathcal{T}_{nc}X},r^*E_{\ad})$ and the second that they actually are compact on $C^*(\mathcal{T}_{nc}X,r^*E_{\ad})$. 

It then follows that $(\sigma_{ubd}(D)^2+1)^{-1}$ is a compact morphism on $C^*(\mathcal{T}_{nc}X,r^*E_{\ad})$. Thus, we get an unbounded $(\CC,C^*(\mathcal{T}_{nc}X))$-Kasparov class in the sense of \cite{bj}:
$$ \sigma_{\ubd,D}:=\left(C^*(\mathcal{T}_{nc}X,r^*E_{\ad}\right),\sigma_{ubd}(D))\in \cE_{\ubd}(\CC, C^*(\mathcal{T}_{nc}X)).$$
In the equality above, $\cE_{\ubd}(A,B)$ denotes the family of unbounded Kasparov $A$-$B$-bimodules as defined in \cite{bj}.

 To check that the latter is an unbounded representative of  the Poincar\'e dual of $[W(D)_+]$ (and thus can be used for the computation of the index of $D_+$), we have to prove the equality :
$$  [\sigma_{W(D_+)}] = [W(\sigma_{ubd}(D))]\in KK(\CC, C^*(\mathcal{T}_{nc}X)),$$ 
which can be achieved by comparing the operator part of these $KK$-classes. When $\varepsilon$, they coincide. 
When $\varepsilon>0$, the operator part in $[\sigma_{W(D_+)}]$ can be represented by:
$$ \begin{pmatrix} 0 & \frac{S_{\ad}-i\partial_\lambda}{(S_{\ad}^2+\partial_\lambda^2+1)^{1/2}} \\  \frac{S_{\ad}+i\partial_\lambda}{(S_{\ad}^2+\partial_\lambda^2+1)^{1/2}} & 0 \end{pmatrix},$$
and for $[W(\sigma_{ubd}(D))]$  we have:   
$$  \begin{pmatrix} 0 & \frac{S_{\ad}-i(1-\varepsilon)\partial_\lambda}{(S_{\ad}^2+(1-\varepsilon)^2(\partial_\lambda^2+1))^{1/2}} \\  \frac{S_{\ad}+i(1-\varepsilon)\partial_\lambda}{(S_{\ad}^2+(1-\varepsilon)^2(\partial_\lambda^2+1))^{1/2}} & 0 \end{pmatrix}.$$
Homotoping the numerical factor $(1-\varepsilon)$ with $1$ provides an operator homotopy between both, and this proves the assertion.  

Observe also that: 
$$W(\sigma_{ubd}(D))|_{\varepsilon=1} = \begin{pmatrix} 0 & \frac{S}{|S|} \\  \frac{S}{|S|} & 0 \end{pmatrix},$$
and thus that, playing again with homotopies, this value at $\varepsilon=1$ can be conserved for $\varepsilon\in]\alpha,1]$ for arbitrary $\alpha>0$.  

\subsection{Atiyah-Patodi-Singer index theorem in K-theory}

\begin{definition}\label{deftop}[Atiyah-Patodi-Singer topological index morphism for a manifold with boundary]
Let $X$ be a manifold with boundary consider an embedding of $X$ in
$\RR^N$ as in  Section \ref{sectionembedding}. The topological index morphism of $X$ is the morphism
$$ind_t^X:K_0(C^*(\mathcal{T}_{nc}X))\longrightarrow \mathbb{Z}$$
defined  as the composition of the following three morphisms
\begin{enumerate}
\item The Connes-Thom isomorphism $\mathscr{CT}_0$ followed by the Morita equivalence $\mathscr{M}_0$:
$$K_0(C^*(\mathcal{T}_{nc}X))\stackrel{\mathscr{CT}_0}{\longrightarrow}K_0(C^*((\mathcal{T}_{nc}X)_{h_0}))\stackrel{\mathscr{M}_0}{\longrightarrow}K^0(\mathscr{N}_{sing}(X))$$
\item The index morphism of the deformation space $\mathscr{B}_\mathscr{F}$ (proposition \ref{Stokes}):
$$
\xymatrix{
K^0(\mathscr{N}_{sing}(X))\ar@/^2pc/[rr]^-{ind_{\mathscr{B}_\mathscr{F}}}&
K^0(\mathscr{B}_\mathscr{F})\ar[l]_-{(e_0)_*}^-{\approx}\ar[r]^-{(e_1)_*}& K^0(\mathbb{R}^N)
}
$$
and
\item the usual Bott periodicity morphism:
$K^0(\mathbb{R}^N)\stackrel{Bott}{\longrightarrow}\mathbb{Z}.$\\
\end{enumerate}

In other terms, the topological index fits by definition in the following commutative diagram

$$\xymatrix{
 K_0(C^*(\mathcal{T}_{nc}X)) \ar[d]_-{\mathscr{CT}}^-{\approx} \ar[rrr]^{ind_t^X}&&& \mathbb{Z} \\
K^0(\mathscr{N}_{sing}(X)) \ar[rrr]_-{ind_{\mathscr{B}_\mathscr{F}}} &&& K^0(\mathbb{R}^N)\ar[u]_-{Bott}^-{\approx}\\
}$$

\end{definition}

\begin{remark}
The topological index defined above is a natural generalisation of the topological index theorem defined by Atiyah-Singer. Indeed, in the smooth case, they coincide. 

\end{remark}

We now prove, as it was outlined in \cite{CMont}, the index theorem.

\begin{theorem}\label{KAPS}[K-theoretic APS]
Let $X$ be a manifold with boundary, consider an embedding of $X$ in
$\RR^N$ as in  Section \ref{sectionembedding}. The Fredholm index
equals the topological index.
\end{theorem}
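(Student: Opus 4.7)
The plan is to assemble three essentially independent pieces that have already been prepared in the paper. First I would invoke the naturality of the Connes--Thom isomorphism with respect to the $\ast$-homomorphisms $e_0$ and $e_1$ of the exact sequence (\ref{btangentsuiteintro}), which is exactly the content of diagram (\ref{CTdiagramintro}). This reduces the computation of $ind_\cF$ on $K_0(C^*(\cT_{nc}X))$ to the computation of the deformation index $ind_{h_\cF}$ of the semi-direct groupoid $(\Gamma(X)_\cF)_{h_\cF}$, after transporting through $\mathscr{CT}$ at both ends.

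Second, by Proposition \ref{hcontinue} the semi-direct groupoid $(\Gamma(X)_\cF)_{h_\cF}$ is free and proper; the same holds for its restrictions $(\cT_{nc}X)_{h_0}$ at $t=0$ and $(\intx\times\intx)_{h_1}$ at $t=1$, since properness and freeness descend to saturated subgroupoids and to the complement $\intx\times\intx\times(0,1]$ inside $\Gamma(X)_\cF$. Proposition \ref{OMorita} then provides canonical Morita equivalences between each of these groupoids and their orbit spaces. The orbit spaces are identified by Lemmas \ref{lemmaqbord}, \ref{lemmaqint} and the construction of Section \ref{BAPSsection}: respectively with $\mathscr{N}_{sing}(X)$, with $\RR^N$, and with the smooth cobordism $\mathscr{B}_\mathscr{F}$ (Proposition \ref{BForiented}). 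Because Morita equivalences are functorial for the induced morphisms $e_0, e_1$ of $C^*$-algebras (these morphisms correspond to closed saturated subgroupoid inclusions, hence commute with the Morita bimodules at each level), the index morphism $ind_{h_\cF}$ is identified under these equivalences with the deformation index $ind_{\mathscr{B}_\mathscr{F}}:K^0(\mathscr{N}_{sing}(X)) \to K^0(\RR^N)$ of the topological cobordism $\mathscr{B}_\mathscr{F}$.

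Third, at $t=1$ I need to check that the Connes--Thom map $\mathscr{CT}:K_0(C^*(\intx\times\intx))\to K_0(C^*((\intx\times\intx)_{h_1}))$, followed by the Morita identification $K_0(C^*((\intx\times\intx)_{h_1}))\cong K^0(\RR^N)$, is the inverse of Bott periodicity up to the standard Morita isomorphism $K_0(C^*(\intx\times\intx))\cong\mathbb{Z}$. This follows from naturality of Connes--Thom (Proposition after Definition \ref{connes-thom-map}): the pair groupoid $\intx\times\intx$ is Morita equivalent to a point, the homomorphism $h_1$ descends to the trivial homomorphism on the point, and for a one-unit groupoid the Connes--Thom map coincides with the Bott morphism by item (2) of the Elliot--Natsume--Nest proposition. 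The orbit space identification at $t=1$ is then precisely $\mathbb{R}^N$, and the resulting triangle commutes.

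Assembling these three steps yields the commutative diagram
\[
\xymatrix{
K_0(C^*(\cT_{nc}X))\ar[d]_-{\mathscr{CT}}^-{\approx}\ar[rrr]^-{ind_\cF}&&&\mathbb{Z}\\
K^0(\mathscr{N}_{sing}(X))\ar[rrr]_-{ind_{\mathscr{B}_\mathscr{F}}}&&&K^0(\RR^N)\ar[u]_-{Bott}^-{\approx}
}
\]
which, compared with Definition \ref{deftop}, asserts $ind_\cF = ind_t^X$. The main obstacle I expect is the bookkeeping of the middle step: one must verify that the Morita equivalences, constructed pointwise in $t\in[0,1]$ from the unit space bibundles of Proposition \ref{OMorita}, glue together over the parameter $t$ into a single Morita equivalence between $(\Gamma(X)_\cF)_{h_\cF}$ and its global orbit space $\mathscr{B}_\mathscr{F}$, and that under this global equivalence the deformation short exact sequence of $C^*$-algebras associated to $(\Gamma(X)_\cF)_{h_\cF}$ matches the one associated to the closed embedding $\mathscr{N}_{sing}(X)\hookrightarrow\mathscr{B}_\mathscr{F}$ with open complement $(0,1]\times\RR^N$. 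Once this compatibility is established, the remaining pieces are formal applications of the naturality of Connes--Thom and of the Elliot--Natsume--Nest characterization.
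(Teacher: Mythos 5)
Your proposal follows essentially the same route as the paper's proof: the paper assembles the single commutative diagram (\ref{diagAPS}) with the Connes--Thom isomorphisms and the Morita equivalences as vertical maps, identifies the left column with the first part of the topological index, the bottom row with $ind_{\mathscr{B}_\mathscr{F}}$, and the right column with the inverse Bott isomorphism, exactly as you do. Your closing caveat about gluing the Morita equivalences compatibly over the parameter $t$ is the only point the paper passes over in silence (it invokes naturality and the orbit-space identifications of Section \ref{BAPSsection} directly), so your argument is correct and matches the paper's.
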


\begin{proof}
The morphism $h:\Gamma(X)_{\cF}\rightarrow \mathbb{R}^N$ is by definition also parametrized by $[0,1]$, {\it i.e.}, we have morphisms $h_0:\mathcal{T}_{nc}X\rightarrow \mathbb{R}^N$ and 
$h_1:\intx\times \intx \rightarrow \mathbb{R}^N$, for $t=1$. We
can consider  the associated groupoids, which  are free and proper.

The following diagram, in which the morphisms $\mathscr{CT}$ and $\mathscr{M}$ are the Connes-Thom and Morita isomorphisms respectively, is trivially commutative by naturality of the Connes-Thom isomorphism:

{\small \begin{equation}\label{diagAPS}
 \xymatrix{
K_0(C^*(\mathcal{T}_{nc}X)) \ar[d]^-{\approx}_{\mathscr{CT}}& K_0(C^*(\Gamma(X)_{\cF})) \ar[d]^-{\approx}_{\mathscr{CT}} \ar[l]_-{e_0}^-{\approx}\ar[r]^-{e_1}& 
K_0(C^*(\intx\times \intx))\ar[d]^-{\approx}_{\mathscr{CT}}\\
K_0(C^*((\mathcal{T}_{nc}X)_{h_0})) \ar[d]^-{\approx}_{\mathscr{M}}& K_0(C^*((\Gamma(X)_{\cF})_h)) \ar[d]^-{\approx}_{\mathscr{M}}\ar[l]_-{e_0}^-{\approx}\ar[r]^-{e_1}& K_0(C^*((\intx\times \intx)_{h_1}))\ar[d]^-{\approx}_{\mathscr{M}}\\
K^0(\mathscr{N}_{sing}(X)) & K^0(\mathscr{B}_\mathscr{F}) \ar[l]_-{e_0}^-{\approx}\ar[r]^-{e_1} & 
K^0(\mathbb{R}^N).
}
\end{equation}}

The left vertical line gives the first part of the topological index
 map. The bottom line is the morphism induced by the
 deformation space $\mathscr{B}$. And the right vertical line is precisely the inverse of the Bott isomorphism $\mathbb{Z}=K^0(\{pt\})\approx K_0(C^*(\intx\times \intx))\rightarrow K^0(\mathbb{R}^N)$. Since the top
 line gives $ind_{\cF}$, we obtain the result.
\end{proof}

\begin{corollary}
  The topological index does not depend on the choice of the embedding.
\end{corollary}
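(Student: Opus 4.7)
The plan is to reduce the statement immediately to Theorem \ref{KAPS}. The Fredholm index morphism $ind_\cF : K_0(C^*(\cT_{nc}X)) \to \ZZ$ is defined intrinsically from the short exact sequence (\ref{btangentsuite}) associated to the deformation groupoid $\Gamma(X)_\cF$; this construction makes no reference to an auxiliary embedding $i:X\hookrightarrow \RR^{N-1}\times\RR_+$. Therefore $ind_\cF$ is canonically attached to $X$.

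Given two admissible embeddings $i,i'$ of $X$, the construction in Section \ref{sectionembedding} produces two topological index morphisms $ind_t^X$ and $(ind_t^X)'$ respectively, each depending a priori on the chosen embedding through the morphism $h$ (respectively $h'$), the singular normal bundle $\mathscr{N}_{sing}(X)$, the ambient dimension $N$, and the cobordism $\mathscr{B}_\cF$. Applying Theorem \ref{KAPS} to each embedding separately yields
\[
ind_t^X = ind_\cF = (ind_t^X)',
\]
since both topological indices agree with the same intrinsic Fredholm index morphism. This gives the desired independence.

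The only subtlety worth flagging is that Theorem \ref{KAPS} as stated requires $N$ to be even (so that Bott periodicity is used in the standard even form and the Connes--Thom isomorphism has the stated parity). If one wishes to compare embeddings into $\RR^{N-1}\times\RR_+$ and $\RR^{N'-1}\times\RR_+$ with different even $N,N'$, one does not need any extra argument: both diagrams separately collapse to $ind_\cF$ via Theorem \ref{KAPS}. Thus no direct geometric comparison between the two singular normal bundles $\mathscr{N}_{sing}(X)$ and $\mathscr{N}_{sing}'(X)$, nor between the two cobordisms $\mathscr{B}_\cF$ and $\mathscr{B}_\cF'$, is required; the whole content of the corollary is that Theorem \ref{KAPS} factors the intrinsically defined $ind_\cF$ through auxiliary, embedding-dependent data. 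I do not expect any essential obstacle — the corollary is a one-line consequence of the preceding theorem, and the proof is a diagram chase of a single equality.
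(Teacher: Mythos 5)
Your argument is correct and is exactly the paper's (implicit) reasoning: the corollary is deduced from Theorem \ref{KAPS} by noting that $ind_\cF$ is defined intrinsically from the exact sequence (\ref{btangentsuite}) without reference to an embedding, so any two topological indices built from admissible embeddings both equal $ind_\cF$ and hence each other. Your remark about possibly different even ambient dimensions $N, N'$ is a sensible clarification but introduces nothing beyond the same one-line deduction.
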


\section{The cohomological APS formula}

The theorem \ref{KAPS} (see also diagram (\ref{diagAPS})) tells us that the computation of the index can be performed (modulo Connes-Thom and Morita) as the computation of the index of a deformation space :
$$
\xymatrix{
K^0(\mathscr{N}_{sing}(X))&
K^0(\mathscr{B}_\mathscr{F})\ar[l]_-{(e_0)_*}^-{\approx}\ar[r]^-{(e_1)_*}& K^0(\mathbb{R}^N)
}
$$

Now, consider the following diagram
\begin{equation}
\xymatrix{
K^0(\RR^N)\ar[r]^-{Ch}\ar[d]_-{Bott}&H^*_{dR}(\RR^N)\ar[d]^-{\int_{\RR^N}\cdot}\\
\ZZ\ar[r]&\CC,
}
\end{equation}
where $\int_{\RR^N}$ is the integration with respect to the fundamental class of $\RR^N$. It is well known that this diagram is commutative.

We can summarize the previous statements in the following result, 
which is then an immediate consequence of theorem \ref{KAPS} (see again diagram (\ref{diagAPS})):

\begin{corollary}\label{APScohomologyformula}
Let $(X,\partial X)$ be a manifold with boundary, and let
$i:X\hookrightarrow \RR^N$ be an embedding as in Section
\ref{sectionembedding}; we use the notations of last Sections.

The index morphism $ind_\cF$ fits in the following commutative diagram
\begin{equation}
\xymatrix{
&K_0(C^*(\mathcal{T}_{nc}(X)))\ar[rrdd]_-{ind_\cF}\ar[r]^-{\mathscr{CT}_{h_0}}&K_0(C^*(\mathcal{T}_{nc}(X))_{h_0}))\ar[r]^ -{Morita}&
K^0(\mathscr{N}_{sing}(X))\ar[d]^{Ch}&\\ &&&H^*(\mathscr{N}_{sing}(X))\ar[d]^-{\int_{\mathscr{N}_{sing}(X)}}&\\
&&&\CC&
}
\end{equation}
For keeping short notations we will denote by $\mathscr{CT}$ the composition of $\mathscr{CT}_h$ with the Morita equivalence induced isomorphism $\mathscr{M}$.

In particular, for any fully elliptic operator $D$ on $X$ with "non commutative symbol" $[\sigma_D]\in K_0(C^*(\mathcal{T}_{nc}X))$ we have the following cohomological formula for the APS index:
\begin{equation}\label{APScohomology}
Index_{APS}(D)=\int_{\mathscr{N}_{sing}(X)}Ch((\mathscr{CT}([\sigma_{D}])))
\end{equation}
\end{corollary}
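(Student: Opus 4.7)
The plan is to observe that this corollary is essentially a concatenation of three results already established in the paper: Theorem \ref{KAPS}, Proposition \ref{Stokes}, and the standard compatibility between the Bott isomorphism for $\RR^N$ and integration of the Chern character. The only genuinely new content is Proposition \ref{FAPS}, which identifies $ind_\cF([\sigma_D])$ with $\mathrm{Index}_{APS}(D)$ and is already stated and proved earlier in Section \ref{fim}.

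First I would invoke Theorem \ref{KAPS}, which states that $ind_\cF$ decomposes as the composition
\begin{equation*}
K_0(C^*(\mathcal{T}_{nc}X))\xrightarrow{\mathscr{CT}} K^0(\mathscr{N}_{sing}(X))\xrightarrow{ind_{\mathscr{B}_\mathscr{F}}}K^0(\RR^N)\xrightarrow{Bott}\ZZ,
\end{equation*}
where $\mathscr{CT}$ denotes the composition of the Connes-Thom map $\mathscr{CT}_{h_0}$ with the Morita equivalence $\mathscr{M}$ coming from the fact that $(\mathcal{T}_{nc}X)_{h_0}$ is free and proper (proposition \ref{hcontinue} combined with proposition \ref{OMorita}). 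Next, by Proposition \ref{Stokes}, the index morphism $ind_{\mathscr{B}_\mathscr{F}}$ of the deformation space $\mathscr{B}_\mathscr{F}$ satisfies
\begin{equation*}
\int_{\mathscr{N}_{sing}(X)}Ch(\alpha)=\int_{\RR^N}Ch(ind_{\mathscr{B}_\mathscr{F}}(\alpha))
\end{equation*}
for any $\alpha\in K^0(\mathscr{N}_{sing}(X))$, as a direct consequence of Stoke's theorem on the oriented cobordism $\mathscr{B}_\mathscr{F}$ between $\mathscr{N}_{sing}(X)$ and $\RR^N$.

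Then, I would combine this with the commutative diagram displayed just before the corollary statement, which records the classical identity $Bott(\beta)=\int_{\RR^N}Ch(\beta)$ on $K^0(\RR^N)$. Chaining these three ingredients one obtains for any $\xi\in K_0(C^*(\mathcal{T}_{nc}X))$:
\begin{equation*}
ind_\cF(\xi)=Bott(ind_{\mathscr{B}_\mathscr{F}}(\mathscr{CT}(\xi)))=\int_{\RR^N}Ch(ind_{\mathscr{B}_\mathscr{F}}(\mathscr{CT}(\xi)))=\int_{\mathscr{N}_{sing}(X)}Ch(\mathscr{CT}(\xi)),
\end{equation*}
which is precisely the commutativity of the diagram in the statement. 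Specializing $\xi=[\sigma_D]$ and using Proposition \ref{FAPS} to identify $ind_\cF([\sigma_D])$ with $\mathrm{Index}_{APS}(D)$ yields the cohomological formula (\ref{APScohomology}).

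There is no substantial obstacle here since the corollary is a formal consequence of earlier results; the only mild point requiring care is making sure the identification $\mathscr{CT}=\mathscr{M}\circ\mathscr{CT}_{h_0}$ is applied consistently, so that the Chern character and the integration in the final formula are taken on the \emph{space} $\mathscr{N}_{sing}(X)$ rather than on the K-theory of the $C^*$-algebra of the groupoid $(\mathcal{T}_{nc}X)_{h_0}$. This is legitimate because $(\mathcal{T}_{nc}X)_{h_0}$ is free and proper (by the restriction to the $t=0$ fiber of proposition \ref{hcontinue}), hence Morita equivalent to its orbit space, which by lemma \ref{lemmaqbord} is exactly $\mathscr{N}_{sing}(X)$ as an oriented $C^\infty$-manifold, so that $\int_{\mathscr{N}_{sing}(X)}$ makes sense via the fundamental class described in Section \ref{singnbsection}.
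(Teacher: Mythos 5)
Your proposal is correct and takes essentially the same route as the paper: the corollary is obtained there as an immediate consequence of Theorem \ref{KAPS}, Proposition \ref{Stokes}, and the standard commutativity of the Bott isomorphism with $\int_{\RR^N}Ch(\cdot)$, with Proposition \ref{FAPS} supplying the identification $ind_{\cF}([\sigma_D])=\mathrm{Index}_{APS}(D)$. Your care about reading $\mathscr{CT}$ as $\mathscr{M}\circ\mathscr{CT}_{h_0}$, using freeness and properness of $(\mathcal{T}_{nc}X)_{h_0}$ and the identification of its orbit space with $\mathscr{N}_{sing}(X)$, matches exactly the paper's use of Proposition \ref{OMorita} and Lemma \ref{lemmaqbord}.
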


Remember that the space $\mathscr{N}_{sing}(X)$ already splits in two, exhibiting in this way the contributions from the interior and from the boundary. The interior contribution looks classic but an explicit comparison between the Thom isomorphism and the Connes-Thom isomorphism is needed. This will be detailed elsewhere. 

In particular, picking up a differential form $\omega_D$ on $\mathscr{N}_{sing}(X)$ representing $Ch(\mathscr{CT}([\sigma_{D}])$, we obtain:
\begin{equation}
Index_{APS}(D)=\int_{\mathscr{N}(X) }\omega_D + \, \int_{D_{\partial}} \omega_D.
\end{equation}
The first integral above involves the restriction of $\omega_D$ to $\mathscr{N}(X)$, which is related to the ordinary principal symbol of $D$. More precisely, the principal symbol $\sigma_{pr}(D)$ of $D$ provides a $K$-theory class of $C^*(A^*(\Gamma(X)))$, that is a compactly supported $K$-theory class of the dual of the Lie algebroid of $\Gamma(X)$ or in other words of the $b$-cotangent bundle ${}^bT^*X$, and by functoriality of both the Chern character and Thom-Connes maps, we have 
 $$ [(\omega_D)|_{\mathscr{N}(X)}]= Ch(\mathscr{CT}([\sigma_{pr}(D)]).$$ 
The second integral can thus be viewed as a correction term, which contains the eta invariant appearing in APS formula and which also depends on the choice of the representative $\omega_D\in Ch(\mathscr{CT}([\sigma_{D}]))$.


 \bigskip We end this Section by showing that the $K$-theory class $\mathscr{CT}([\sigma_{\ubd,D}])\in K^0(\mathscr{N}_{sing}(X))$ associated to any  fully elliptic $b$-operator $D$ of order $1$ can be  explicitly computed. Here $\sigma_{\ubd,D}$ is the unbounded $KK$-element defined in paragraph \ref{fim}.  
Similar computations can be done with $0$-order pseudodifferential operators. Thus, we intend to describe (see Section \ref{CTsection}): 
\begin{equation}\label{to_be_computed}
 \mathscr{CT}([\sigma_{\ubd,D}]) := \mathscr{M}\circ (e_{1,t})_*\circ (e_{0,t})_*^{-1}\circ B (\sigma_{\ubd,D})\in K^0(\mathscr{N}_{sing}(X)).
\end{equation}

We assume that $N=2M$ is even, we identify $\RR^N$ with $\CC^M$, we denote by $\Lambda^*(\CC^M)$ the exterior algebra of $\CC^M$, by $c(v)=v\wedge\cdot -\overline{v}\llcorner\cdot$ the Clifford multiplication by $v$, and we consider the following unbounded representative of the $M^{\text{th}}$ power of Bott class $\beta\in KK(\CC,C_0(\RR^2))$:
\begin{equation}\label{be}
 \beta^M = \left( C_0(\RR^N,\Lambda^*(\CC^M)),1,  c \right)\in \cE_{\ubd}(\CC,C_0(\RR^N))
\end{equation}
with the grading given by even/odd forms.
Then $B(\sigma_{\ubd,D})$ is represented by 
\begin{equation}
 \Sigma_{\ubd,D}:=\left(C^*(\mathcal{T}_{nc}X\times\RR^N,r^*E_{\ad}\otimes\Lambda^*(\CC^M)),1, \Sigma_{\ubd}(D) \right)\in\cE_{\ubd}(\CC,C^*(\mathcal{T}_{nc}X\times\RR^N))
\end{equation}
where we have set 
\begin{equation}\label{Sigma_unbounded}
 \Sigma_{\ubd}(D) := \sigma_{\ubd}(D)\hat{\otimes} 1+1\hat{\otimes}c.
\end{equation}
The latter is a priori a differential  $\mathcal{T}_{nc}X\times\RR^N$-operator, that is an operator on the cartesian product of the groupoid $\mathcal{T}_{nc}X$ with the space $\RR^N$, but a straight computation shows it is also a $\T\ltimes\RR^N$-operator and we prove:
\begin{proposition}
The operator $\Sigma_{\ubd}(D)\in\mathrm{Diff}^1(\T\ltimes\RR^N,r^*E_{\ad}\times \Lambda^*(\CC^M))$ defined above is symmetric, elliptic and gives an unbounded $KK$-element:
\begin{equation}
\Sigma_h:=\left(C^*(\T\ltimes\RR^N,r^*E_{\ad}\otimes\Lambda^*(\CC^M)),1, \Sigma_{\ubd}(D)\right)\in \cE_{\ubd}(\CC,C^*(\T\ltimes\RR^N)),
\end{equation}
which represents the image of $\sigma_{\ubd,D}$ under the Thom-Connes map $\mathscr{CT}_0$:
 $$[\Sigma_h]:= (e_{1,t})_*\circ (e_{0,t})_*^{-1}\circ B (\sigma_{\ubd,D})=\mathscr{CT}_0(\sigma_{\ubd,D})\in KK(\CC,C^*(\T\ltimes\RR^N)).$$ 
 \end{proposition}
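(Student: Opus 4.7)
The plan is to establish three claims in sequence: first, $\Sigma_{\ubd}(D)$ is an order-one symmetric elliptic differential operator on the semi-direct groupoid $\T\ltimes\RR^N$; second, it gives an unbounded Kasparov $(\CC,C^*(\T\ltimes\RR^N))$-cycle; third, the resulting class equals $\mathscr{CT}_0([\sigma_{\ubd,D}])$.

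For the first claim, both summands $\sigma_{\ubd}(D)\hot 1$ and $1\hot c$ are order-one symmetric differential operators on the units of $\T\times\RR^N$, hence so is their sum. Ellipticity follows since the two principal symbols act in complementary cotangent directions, each being injective away from the zero section: $\sigma_{\ubd}(D)$ by the discussion of Section \ref{fim}, and Clifford multiplication by its standard properties. The only nontrivial point is that $\Sigma_{\ubd}(D)$ is equivariant under right multiplication in the \emph{semi-direct} product rather than merely the Cartesian one; this is the straight computation alluded to in the statement, using the formula $(\gamma,X)\cdot(\eta,X+h_0(\gamma)) = (\gamma\eta,X)$ together with the fact that the $\T$-part of the operator is intrinsic to $\T$ while the Clifford piece depends only on the range variable in $\RR^N$, which is invariant under right multiplication.

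For the second claim, the $\ZZ/2$-grading is essential: since both $\sigma_{\ubd}(D)$ and $c$ are odd, anticommutation in $\hot$ makes the cross terms in $\Sigma_{\ubd}(D)^2$ cancel, so that $\Sigma_{\ubd}(D)^2 = \sigma_{\ubd}(D)^2\hot 1 + 1\hot c^2$. The parametrix construction carried out in Section \ref{fim} for $\sigma_{\ubd}(D)$ on the non-compact unit space of $\T$ can then be combined with the standard inverse of $c^2+1$ on $C_0(\RR^N,\Lambda^*(\CC^M))$ to produce $q\in\Psi^{-1}$ satisfying $\Sigma_{\ubd}(D)q - 1,\ q\Sigma_{\ubd}(D) - 1 \in C^*$, and vanishing at $\varepsilon=1$, so that the arguments borrowed from \cite{VassoutJFA} yield selfadjointness, regularity, and compactness of the resolvent of $\Sigma_{\ubd}(D)$ on the Hilbert module $C^*(\T\ltimes\RR^N,r^*E_{\ad}\hot\Lambda^*(\CC^M))$.

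For the third claim, one invokes the deformation machinery preceding Definition \ref{connes-thom-map}: let $\gr_H$ be the deformation groupoid attached to the homomorphism $H:\T\times[0,1]\to\RR^N$, $H(\gamma,t) = t\cdot h_0(\gamma)$. The formula $\sigma_{\ubd}(D)\hot 1 + 1\hot c$ still defines a selfadjoint elliptic element of $\mathrm{Diff}^1(\gr_H,\cdot)$ whose restriction at $t=0$ (where the semi-direct product degenerates to the Cartesian product) is the external product representing $B([\sigma_{\ubd,D}])$, and whose restriction at $t=1$ is $\Sigma_h$. By the very construction of the deformation index morphism $\mathscr{D}_{h_0}=(e_{1,t})_*\circ(e_{0,t})_*^{-1}$ associated to the short exact sequence preceding Definition \ref{connes-thom-map}, we conclude $[\Sigma_h] = \mathscr{D}_{h_0}\circ B([\sigma_{\ubd,D}]) = \mathscr{CT}_0([\sigma_{\ubd,D}])$. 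The principal obstacle will be the regularity and compactness statement in the second claim, since the unit space is non-compact in two distinct directions: the $\T$ factor degenerates at $\varepsilon=1$ and the Bott factor $\RR^N$ is non-compact at infinity, so the parametrix must be glued with cut-offs adapted to both ends, closely following the template of Section \ref{fim}.
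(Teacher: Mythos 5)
Your plan reproduces the paper's architecture at the decisive point: the identification $[\Sigma_h]=\mathscr{CT}_0(\sigma_{\ubd,D})$ is obtained exactly as in the paper, by running the same construction over the deformation homomorphism $H(\gamma,t)=t\cdot h_0(\gamma)$ and evaluating at $t=0$ (external product with the Bott class) and $t=1$, and the algebraic identity $\Sigma_{\ubd}(D)^2=\sigma_{\ubd}(D)^2\hot 1+|X|^2$ coming from the odd grading is also the paper's starting point. Where you diverge is the analytic step (regularity and compactness of the resolvent). The paper does not build a parametrix for $\Sigma_{\ubd}(D)$ at all: it observes that $p=\Sigma_{\ubd}(D)^2+1$ is invertible fiberwise over each unit $(x,X)$, hence invertible with $p^{-1}\in\Psi^{-2}$, which gives regularity at once; compactness is then obtained by extending $p^{-1}$ by zero to the doubly compactified groupoid $\overline{\T}\times B^N$ (whose unit space is compact, so negative-order operators are compact morphisms there) and invoking the restriction exact sequence along the closed saturated subgroupoid $z:=(1-\varepsilon)\,(|X|+1)^{-1}=0$, using the decay of $p^{-1}$ both as $\varepsilon\to 1$ and as $|X|\to\infty$.

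Your parametrix-with-cutoffs route, modeled on Section \ref{fim}, can be made to work, but two points need tightening. First, the ellipticity rationale: on $\T\ltimes\RR^N$ the Clifford term $1\hot c$ is an order-zero potential (the $\RR^N$ variable sits in the units, so $c$ acts by multiplication, not by differentiation in transverse directions); hence the principal symbol of $\Sigma_{\ubd}(D)$ is just that of $\sigma_{\ubd}(D)\hot 1$, and ellipticity holds for that reason, not because two symbols act in complementary cotangent directions. Second, and this is the crux, demanding that the parametrix vanish at $\varepsilon=1$ controls only one of the two non-compact ends; for $q$ and the remainders to be compact morphisms of $C^*(\T\ltimes\RR^N,r^*E_{\ad}\otimes\Lambda^*(\CC^M))$ you also need decay as $|X|\to\infty$, and ``combining'' the parametrix of $\sigma_{\ubd}(D)$ with the inverse of $c^2+1$ is not by itself a construction, since $\Sigma_{\ubd}(D)$ is not a direct sum of the two pieces. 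The mechanism that saves you is exactly the growth of the Clifford potential: $\Sigma_{\ubd}(D)^2+1\geq |X|^2+1$, so for large $|X|$ the true inverse exists and decays like $|X|^{-2}$, and it can be glued by cutoffs to an interior parametrix and to the true inverse near $\varepsilon=1$ --- or, equivalently and more cleanly, one passes to the compactification $B^N$ as the paper does. You flag the two-ended non-compactness in your final sentence, but as written the argument does not yet supply the decay at the $\RR^N$ end; once that is made explicit (either by the glued true inverse for large $|X|$ or by the extension-by-zero argument on $\overline{\T}\times B^N$), your proof is complete and essentially equivalent to the paper's.
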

\begin{proof}
Let us consider $p:= \Sigma_{\ubd}(D)^2+1= \sigma_{\ubd}(D)^2+ |X|^2+1 \in \mathrm{Diff}^2(\mathcal{T}_{nc}X\times\RR^N,r^*E_{\ad}\times \Lambda^*(\CC^M))$, where we have taken into account the identity $c^2(f)(\gamma,X)=|X|^2.f(\gamma,X)$.  It is self-adjoint and for any $(x,X)$ belonging to the unit space $(\mathcal{T}_{nc}X\times\RR^N)^{(0)}=X_\partial\times\RR^N$, the operator $\Sigma_{\ubd}(D)^2_{(x,X)}+1$ is invertible by classical arguments. Thus, $p$ itself is invertible, which means that $p^{-1}$ exists and belongs to $\Psi^{-2}(\mathcal{T}_{nc}X\times\RR^N,r^*E_{\ad}\times \Lambda^*(\CC^M))$. As a direct consequence, $\Sigma_{\ubd}(D)$ is regular. Moreover, let us consider the groupoid $\overline{\T}\times B^N\rightrightarrows \overline{X_\partial}\times B^N$ where $B^N$ is the compactification of $\RR^N$ by a sphere at infinity and we recall that $\overline{\T}=\T\cup \partial X\times \partial X\times \RR\times\{\varepsilon=1\}\rightrightarrows X\times[0,1]_{\varepsilon}$. The groupoid structure of $\overline{\T}\times B^N$ is the obvious one and it inherits a natural $C^{\infty,0}$-structure in such a way that it contains $\T\times\RR^N$ as an open saturated $C^{\infty,0}$-subgroupoid. We also extend the vector bundles $r^*E_{\ad}$ and $\Lambda^*(\CC^M)$ onto $\overline{X_\partial}$ and $B^N$ without changing the notations. Thanks to the decay of $p^{-1}$ when $\varepsilon\to 1$ or $|X|\to \infty$, we see that the extension of $p^{-1}$ by $0$ gives an element of $\Psi^{-2}(\overline{\T}\times B^N,r^*E_{\ad}\times \Lambda^*(\CC^M))$. Since $\overline{\T}\times B^N$ has a compact unit space, we know by \cite{MP,VassoutJFA} that $p^{-1}$ is a compact morphism of the $C^*(\overline{\T}\times B^N)$-Hilbert module $\overline{\cE}:= C^*(\overline{\T}\times B^N ,r^*E_{\ad}\times \Lambda^*(\CC^M))$. Considering the closed saturated $C^{\infty,0}$-subgroupoid $\partial(\overline{\T}\times B^N)$ given by the equation $z:=(1-\varepsilon)(1/(|X|+1))=0$ and the corresponding Hilbert submodule $\partial\overline{\cE}$, we get an exact sequence of $C^*$-algebras
\begin{equation}
 0\longrightarrow \cK(\cE) \longrightarrow \cK(\overline{\cE}) \overset{\hbox{rest}|_{z=0}}{\longrightarrow} \cK(\partial\overline{\cE})\longrightarrow 0.
\end{equation}
Since $\hbox{rest}|_{z=0}(p^{-1})=0$, we conclude that $p^{-1}$ belongs to $\cK(\cE)$. 

Replacing $h$ by $H=(th)_{t\in[0,1]}$ and extending the previous construction to the groupoid $\T\ltimes\RR^N\times[0,1]\rightrightarrows X_\partial\times\RR^N\times[0,1]$, we get an unbounded Kasparov class  $\Sigma_H:= (C^*(\T\ltimes\RR^N\times[0,1],r^*E_{\ad}\otimes\Lambda^*(\CC^M)),1, \Sigma_{\ubd}(D))\in \Psi^1(\CC,C^*(\T\ltimes\RR^N\times[0,1]))$ such that 
$(e_{0,t})_*([\Sigma_H])=B (\sigma_{\ubd,D})$ and $(e_{1,t})_*([\Sigma_H])=[\Sigma_h]$.
\end{proof}


It remains to apply the Morita equivalence $\mathscr{M}$ between the groupoid $\T\ltimes\RR^N$ and the orbit space $\mathscr{B}_{h_0}=X_{\partial}\times \RR^N/\T\ltimes\RR^N$ to the class $[\Sigma_h]$, since $\mathscr{CT}([\sigma_{\ubd,D}])=\mathscr{M}(\Sigma_h)$.
Following \cite{Macho-Ouchi,Tu04}, the Morita isomorphism $\mathscr{M}:K_0(C^*(\T\ltimes\RR^N))\to K_0(C_0(X_{\partial}\times \RR^N/\T\ltimes\RR^N))$ is given by the Kasparov product with the class $\mathscr{M}$ given by: 
\begin{equation}
 \mathscr{M} = (\cE_\mathscr{M} ,r_*,0) \in KK(C^*(\T\ltimes\RR^N),C_0(X_{\partial}\times \RR^N/\T\ltimes\RR^N))).
\end{equation}
whose ingredients are recalled below.


The space $C_c(X_{\partial}\times \RR^N)$ is in a natural way a right $C_0(X_{\partial}\times \RR^N/\T\ltimes\RR^N)$-module: 
\begin{equation}\label{right-mod}
 \forall \xi\in C_c(X_{\partial}\times \RR^N),\forall a\in C_0(X_{\partial}\times \RR^N/\T\ltimes\RR^N),\quad    \xi.a(z)=  \xi(z) a([z]),
\end{equation}
and using the right Haar system $\lambda$ on $\T\ltimes\RR^N$ previously introduced, the following formula: 
\begin{equation} 
  <\xi,\eta>([z]) = \int_{(\T\ltimes\RR^N)_z}\overline{\xi(r(\gamma))}\eta(r(\gamma))d\lambda_z(\gamma),
\end{equation}
 also defines a $C_0(X_{\partial}\times \RR^N/\T\ltimes\RR^N)$-prehilbertian module structure on $C_c(X_{\partial}\times \RR^N)$. Then $\cE_\mathscr{M}$ is defined to be the Hilbert module completion of $C_c(X_{\partial}\times \RR^N)$: 
\begin{equation}\label{morita_bimodule}
 \cE_\mathscr{M} = \overline{C_c(X_{\partial}\times \RR^N)}^{<,>}.
\end{equation}
 Finally, the representation $r_*$ comes from the target map as follows:  
\begin{equation}\label{left-mod}
 \forall b\in C^{\infty,0}_c(\T\ltimes\RR^N),\forall \xi\in C_c(X_{\partial}\times \RR^N),\quad r_*(b).\xi(z):=(b.r^*(\xi))(z), 
\end{equation}
where the $\cdot$ above just denotes the convolution product in $C^{\infty,0}(\T\ltimes\RR^N)$. The map $r_*$ clearly extends into a $*$-homomorphism 
 $r_* : C^*( \T\ltimes\RR^N)\to \cK(\cE_\mathscr{M})$. 

The Kasparov product $\Sigma_h\underset{C^*(\T\ltimes\RR^N)}{\otimes}\mathscr{M}$ is formally given by:
\begin{equation}\label{answer_1}
\left(C^*(\T\ltimes\RR^N,r^*E)\underset{r_*}{\otimes}\cE_{\mathscr{M}},1,\Sigma_{\ubd}(D)\widehat{\otimes}1 \right)\in KK(\CC,C_0(X_{\partial}\times \RR^N/\T\ltimes\RR^N)),
\end{equation}
where we have noted $E:=E_{\ad}\otimes\Lambda^*(\CC^M)$. It is unitarly equivalent to the following class:
\begin{equation}\label{answer_2}
\left(\cE_E,1,r_*(\Sigma_{\ubd}(D)) \right)\in KK(\CC,C_0(X_{\partial}\times \RR^N/\T\ltimes\RR^N)),
\end{equation}
where $\cE_E$ is the hilbert module completion of $C_c(X_{\partial}\times \RR^N,E)$ with respect to the $C_0(X_{\partial}\times \RR^N/\T\ltimes\RR^N)$-prehilbertian module structure given by 
\begin{equation}
  <\xi,\eta>([z]) = \int_{(\T\ltimes\RR^N)_z} <\xi(r(\gamma)),\eta(r(\gamma))>_{E_{r(\gamma)}}d\lambda_z(\gamma),
\end{equation}
and for any pseudodifferential operator $P$ on a groupoid $G$ acting on the sections of a bundle $F$ over $G^{(0)}$, the operator  $r_*(P)$ acting on $C^\infty_c(G^{(0)},F)$ is defined by, under appropriate properness conditions on the support of $P$ which are fulfilled in our case:  $r_*(P)(f)(z)=P(f\circ r)(z)$.

The unitary equivalence between \eqref{answer_1} and \eqref{answer_2} comes from the map:
\begin{alignat}{2}\label{mod_identification}
 U :& \ & C_c( \T\ltimes\RR^N,r^*E)\otimes C_c(X_{\partial}\times \RR^N) & \longrightarrow  C_c(X_{\partial}\times \RR^N,E) \\
     & \  &  a\otimes f  & \longrightarrow r_*(a)f 
\end{alignat}
which extends into the required unitary between the Hilbert modules $C^*(\T\ltimes\RR^N,E)\underset{r_*}{\otimes}\cE_{\mathscr{M}}$ and $\cE_E$.

The $K$-theory element \eqref{answer_2}, which represents $\mathscr{CT}([\sigma_{\ubd,D}])$ up to the homeomorphism between the orbit space $X_{\partial}\times \RR^N/\T\ltimes\RR^N$ and the singular normal space $\mathscr{N}_{sing}(X)$, is simpler than it looks at first glance: it is given by a family of elliptic differential operators along the orbits of $X_{\partial}\times \RR^N$ parametrized by the orbit space. Moreover, when  applying the homeomorphism  $\widetilde{q_\partial}:X_{\partial}\times \RR^N/\T\ltimes\RR^N\overset{\simeq}{\to} \mathscr{N}_{sing}(X)$ of lemma \ref{lemmaqbord}, we get a family $(\Sigma_o)_{o\in \mathscr{N}_{sing}(X)}$ which has a simple expression in terms of the symbol of $D$ over  $N(X)\subset \mathscr{N}_{sing}(X)$  and in terms of the indicial operator of $D$ over $(0,1)\times\RR^{N-1}\subset \mathscr{N}_{sing}(X)$. One can then add to this family the de Rham differential of the manifold $\mathscr{N}_{sing}(X)$ in order to get a superconnection \cite{Bisinv} and introduce a differential form on $\mathscr{N}_{sing}(X)$ using the trace  $\overline{tr}$ studied in \cite{Meleta} and the heat operator of the square of the superconnection following the method of \cite{Quillen85,Bisinv,BGV}: we expect it to give the Chern character of $\mathscr{CT}([\sigma_{\ubd,D}])$ and then an explicit form for the index formula \eqref{APScohomology}. The details will be worked out in a forthcoming paper and in the more general framework of manifolds with corners.

%
%

\appendix
\section{Deformation to the normal cone functor and tangent groupoids}\label{dncappendix}

The tangent groupoid is a particular case of a geometric construction that we describe here.

Let $M$ be a $C^\infty$ manifold and $X\subset M$ be a $C^\infty$ submanifold. We denote
by $\Nb_{X}^{M}$ the normal bundle to $X$ in $M$.
We define the following set
\begin{align}
\mathscr{D}_{X}^{M}:= \Nb_{X}^{M} \times {0} \bigsqcup M \times \RR^* 
\end{align} 
The purpose of this Section is to recall how to define a $C^\infty$-structure in $\mathscr{D}_{X}^{M}$. This is more or less classical, for example
it was extensively used in \cite{HS}.

Let us first consider the case where $M=\RR^p\times \RR^q$ 
and $X=\RR^p \times \{ 0\}$ ( here we
identify  $X$ canonically with $ \RR^p$). We denote by
$q=n-p$ and by $\mathscr{D}_{p}^{n}$ for $\mathscr{D}_{\RR^p}^{\RR^n}$ as above. In this case
we   have that $\mathscr{D}_{p}^{n}=\RR^p \times \RR^q \times \RR$ (as a
set). Consider the 
bijection  $\psi: \RR^p \times \RR^q \times \RR \rightarrow
\mathscr{D}_{p}^{n}$ given by 
\begin{equation}\label{psi}
\psi(x,\xi ,t) = \left\{ 
\begin{array}{cc}
(x,\xi ,0) &\mbox{ if } t=0 \\
(x,t\xi ,t) &\mbox{ if } t\neq0
\end{array}\right.
\end{equation}
whose  inverse is given explicitly by 
$$
\psi^{-1}(x,\xi ,t) = \left\{ 
\begin{array}{cc}
(x,\xi ,0) &\mbox{ if } t=0 \\
(x,\frac{1}{t}\xi ,t) &\mbox{ if } t\neq0
\end{array}\right.
$$
We can consider the $C^\infty$-structure on $\mathscr{D}_{p}^{n}$
induced by this bijection.

We  now switch to the general case. A local chart 
$(\mathcal{U},\phi)$ in $M$ is said to be a $X$-slice if 
\begin{itemize}
\item[1)]$\phi : \mathcal{U}  \rightarrow U \subset \RR^p\times \RR^q$ is a diffeomorphsim. 
\item[2)]  If  $V =U \cap (\RR^p \times \{ 0\})$, then
$\phi^{-1}(V) =   \mathcal{U} \cap X$ , denoted by $\mathcal{V}$.
\end{itemize}
With this notation, $\mathscr{D}_{V}^{U}\subset \mathscr{D}_{p}^{n}$ as an
open subset. We may define a function 
\begin{equation}\label{phi}
\tilde{\phi}:\mathscr{D}_{\mathcal{V}}^{\mathcal{U}} \rightarrow \mathscr{D}_{V}^{U} 
\end{equation}
in the following way: For $x\in \mathcal{V}$ we have $\phi (x)\in \RR^p
\times \{0\}$. If we write 
$\phi(x)=(\phi_1(x),0)$, then 
$$ \phi_1 :\mathcal{V} \rightarrow V \subset \RR^p$$ 
is a diffeomorphism. We set 
$\tilde{\phi}(v,\xi ,0)= (\phi_1 (v),d_N\phi_v (\xi ),0)$ and 
$\tilde{\phi}(u,t)= (\phi (u),t)$ 
for $t\neq 0$. Here 
$d_N\phi_v: N_v \rightarrow \RR^q$ is the normal component of the
 derivative $d\phi_v$ for $v\in \mathcal{V}$. It is clear that $\tilde{\phi}$ is
 also a  bijection (in particular it induces a $C^{\infty}$ structure on $\mathscr{D}_{\mathcal{V}}^{\mathcal{U}}$). 
 
Let us define, with the same notations as above, the following set 
\begin{equation}\label{openOmega}
\Omega_{V}^{U}=\{(x,\xi,t)\in 
\RR^p \times \RR^q \times \RR: (x,t\cdot \xi)\in U \}.
\end{equation}
which is an open subset of $\RR^p \times \RR^q \times [0,1]$ and thus
a $\ci$ manifold (with border). 
It is immediate that $\mathscr{D}_{V}^{U}$ is diffeomorphic to $\Omega_{V}^{U}$
through the restriction of $ \Psi$, used in (\ref{psi}). 
Now we consider an atlas 
$ \{ (\mathcal{U}_{\alpha},\phi_{\alpha}) \}_{\alpha \in \Delta}$ of $M$
 consisting of $X-$slices. It is clear that 
\begin{equation}\label{atlasdcn} 
\mathscr{D}_{X}^{M}= \cup_{\alpha \in
 \Delta}\mathscr{D}_{\mathcal{V}_{\alpha}}^{\mathcal{U}_{\alpha}}
\end{equation}
and if we take $\mathscr{D}_{\mathcal{V}_{\alpha}}^{\mathcal{U}_{\alpha}} 
\stackrel{\varphi_{\alpha}}{\rightarrow } 
\Omega_{V_{\alpha}}^{U_{\alpha}}$ defined as the composition
$$\mathscr{D}_{\mathcal{V}_{\alpha}}^{\mathcal{U}_{\alpha}} \stackrel{\tilde{\phi_{\alpha}}}{\rightarrow}
\mathscr{D}_{V_{\alpha}}^{U_{\alpha}}
\stackrel{\Psi_{\alpha}^{-1}}{\rightarrow} 
\Omega_{V_{\alpha}}^{U_{\alpha}} $$
then we have (proposition 3.1 in \cite{Ca2}).

\begin{proposition}\label{dncatlas}
$ \{ (\mathscr{D}_{\mathcal{V}_{\alpha}}^{\mathcal{U}_{\alpha}},\varphi_{\alpha})
  \} _{\alpha \in \Delta }$ is a $C^\infty$ atlas over
  $\mathscr{D}_{X}^{M}$.
\end{proposition}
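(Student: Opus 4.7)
The plan is to verify the two defining properties of a smooth atlas: (i) the chart domains cover $\mathscr{D}_X^M$, and (ii) for any two indices $\alpha,\beta\in\Delta$, the transition map
\[
\varphi_\beta\circ\varphi_\alpha^{-1}\;:\;\varphi_\alpha\bigl(\mathscr{D}_{\mathcal{V}_\alpha}^{\mathcal{U}_\alpha}\cap \mathscr{D}_{\mathcal{V}_\beta}^{\mathcal{U}_\beta}\bigr)\longrightarrow \varphi_\beta\bigl(\mathscr{D}_{\mathcal{V}_\alpha}^{\mathcal{U}_\alpha}\cap \mathscr{D}_{\mathcal{V}_\beta}^{\mathcal{U}_\beta}\bigr)
\]
is a diffeomorphism between open subsets of $\Omega$-type domains in $\RR^p\times\RR^q\times\RR$. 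Property (i) is immediate from the decomposition (\ref{atlasdcn}) together with the fact that each $\varphi_\alpha$ is a bijection onto the open subset $\Omega_{V_\alpha}^{U_\alpha}\subset\RR^p\times\RR^q\times\RR$, so the intersections of chart domains are open in $\mathscr{D}_X^M$ via the induced topology.

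For (ii), I would first write the transition map explicitly. Setting $\phi_{\beta\alpha}:=\phi_\beta\circ\phi_\alpha^{-1}$, a smooth diffeomorphism between open subsets of $\RR^p\times\RR^q$, and splitting it into its two components $\phi_{\beta\alpha}=(\phi_{\beta\alpha}^{(1)},\phi_{\beta\alpha}^{(2)})$ with values in $\RR^p$ and $\RR^q$ respectively, a direct unwinding of the definitions of $\tilde\phi_\alpha$, $\Psi_\alpha$ and $\varphi_\alpha$ shows that for $t\neq 0$
\[
(\varphi_\beta\circ\varphi_\alpha^{-1})(x,\xi,t)=\Bigl(\phi_{\beta\alpha}^{(1)}(x,t\xi),\ \tfrac{1}{t}\phi_{\beta\alpha}^{(2)}(x,t\xi),\ t\Bigr),
\]
while at $t=0$ it is
\[
(\varphi_\beta\circ\varphi_\alpha^{-1})(x,\xi,0)=\bigl(\phi_{1,\beta\alpha}(x),\ d_N\phi_{\beta\alpha,x}(\xi),\ 0\bigr),
\]
where $\phi_{1,\beta\alpha}$ and $d_N\phi_{\beta\alpha,x}$ denote the restriction of $\phi_{\beta\alpha}$ to $\RR^p\times\{0\}$ and its transverse (normal) derivative, respectively.

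The core of the proof is then to establish smoothness of this map across $t=0$. The crucial structural fact is that, because $\phi_\alpha$ and $\phi_\beta$ are $X$-slices, $\phi_{\beta\alpha}$ sends $\RR^p\times\{0\}$ into $\RR^p\times\{0\}$; in particular $\phi_{\beta\alpha}^{(2)}(x,0)=0$ identically. By Hadamard's lemma there is a smooth matrix-valued function $A(x,y)$ such that $\phi_{\beta\alpha}^{(2)}(x,y)=A(x,y)\cdot y$, with $A(x,0)=d_N\phi_{\beta\alpha,x}$. Substituting gives
\[
\tfrac{1}{t}\phi_{\beta\alpha}^{(2)}(x,t\xi)=A(x,t\xi)\cdot\xi,
\]
which is smooth jointly in $(x,\xi,t)$ including at $t=0$, where it specialises to $d_N\phi_{\beta\alpha,x}(\xi)$, matching the formula for $t=0$ above. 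Hence $\varphi_\beta\circ\varphi_\alpha^{-1}$ is $C^\infty$, and by symmetry so is its inverse; this is the main obstacle and once it is cleared the atlas property follows.

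Finally I would note that the topology of $\mathscr{D}_X^M$ is the one induced by declaring the $\varphi_\alpha$ to be homeomorphisms onto their images, which is consistent (the compatibility just proved shows that the transition maps are homeomorphisms) and Hausdorff and second countable since the same holds for each $\Omega_{V_\alpha}^{U_\alpha}$ and for the gluing data coming from the manifold atlas of $M$. This completes the construction of the $C^\infty$-atlas.
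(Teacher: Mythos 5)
Your argument is correct: the only nontrivial point is smoothness of the transition maps across $t=0$, and your Hadamard-lemma factorization $\phi^{(2)}_{\beta\alpha}(x,y)=A(x,y)\cdot y$, giving $\tfrac{1}{t}\phi^{(2)}_{\beta\alpha}(x,t\xi)=A(x,t\xi)\xi$ with $A(x,0)=d_N\phi_{\beta\alpha,x}$, settles it in the standard way (the remaining points you mention — openness of the images of overlaps, the induced topology, Hausdorffness — are routine). Note that the paper gives no proof here but simply cites Proposition 3.1 of \cite{Ca2}, and your proof is essentially that standard argument.
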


\begin{definition}[Deformation to the normal cone]
Let $X\subset M$ be as above. The set
$\mathscr{D}_{X}^{M}$ equipped with the  $C^{\infty}$ structure
induced by the atlas of  $X$-slices is called
 the deformation to the  normal cone associated  to   the embedding
$X\subset M$. 
\end{definition}


One important feature about the deformation to the normal cone is the functoriality. More explicitly,  let
 $f:(M,X)\rightarrow (M',X')$
be a   $C^\infty$ map   
$f:M\rightarrow M'$  with $f(X)\subset X'$. Define 
$ \mathscr{D}(f): \mathscr{D}_{X}^{M} \rightarrow \mathscr{D}_{X'}^{M'} $ by the following formulas: \begin{enumerate}
\item[1)] $\mathscr{D}(f) (m ,t)= (f(m),t)$ for $t\neq 0$, 

\item[2)]  $\mathscr{D}(f) (x,\xi ,0)= (f(x),d_Nf_x (\xi),0)$,
where $d_Nf_x$ is by definition the map
\[  (\Nb_{X}^{M})_x 
\stackrel{d_Nf_x}{\longrightarrow}  (\Nb_{X'}^{M'})_{f(x)} \]
induced by $ T_xM 
\stackrel{df_x}{\longrightarrow}  T_{f(x)}M'$.
\end{enumerate}
 Then we have, (proposition 3.4 in \cite{Ca2}), 
 \begin{proposition}\label{dncfunctoriality}
 $\mathscr{D}(f):\mathscr{D}_{X}^{M} \rightarrow \mathscr{D}_{X'}^{M'}$ is a $C^\infty$-map. In the language of categories, the deformation to the normal cone  construction defines a functor
\begin{equation}\label{fundnc}
\mathscr{D}: \mathscr{C}_2^{\infty}\longrightarrow \mathscr{C}^{\infty} ,
\end{equation}
where $\mathscr{C}^{\infty}$ is the category of $C^\infty$-manifolds and $\mathscr{C}^{\infty}_2$ is the category of pairs of $C^\infty$-manifolds.
\end{proposition}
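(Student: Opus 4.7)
\textbf{Proof proposal for Proposition \ref{dncfunctoriality}.} The plan is to reduce the statement to a purely local computation in the $\Omega$-type charts of Proposition \ref{dncatlas}, and then resolve the apparent singularity at $t=0$ by a first-order Taylor expansion along the normal directions, exactly as one does to show that the tangent groupoid of $\RR^n$ is smooth. The functoriality part of the statement ($\mathscr{D}(g\circ f)=\mathscr{D}(g)\circ \mathscr{D}(f)$ and $\mathscr{D}(\mathrm{id})=\mathrm{id}$) is a purely set-theoretic verification: on $t\neq 0$ it is obvious, and at $t=0$ it follows from the chain rule for normal differentials, since $d_N(g\circ f)_x = d_Ng_{f(x)}\circ d_Nf_x$ whenever $f(X)\subset X'$ and $g(X')\subset X''$. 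So the substantive content is smoothness of $\mathscr{D}(f)$.

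First, I would fix a point of $\mathscr{D}_X^M$ and show smoothness in a neighborhood of it. Away from $t=0$ the map $\mathscr{D}(f)$ reads $(m,t)\mapsto(f(m),t)$, which is patently smooth, so the only concern is points of the form $(x,\xi,0)\in N_X^M\times\{0\}$. Around such a point I choose an $X$-slice $(\cU,\phi)$ at $x$ with $\phi:\cU\to U\subset\RR^p\times\RR^q$ sending $\cU\cap X$ to $V=U\cap(\RR^p\times\{0\})$, and an $X'$-slice $(\cU',\phi')$ at $f(x)$ with $\phi':\cU'\to U'\subset\RR^{p'}\times\RR^{q'}$, shrinking $\cU$ if necessary so that $f(\cU)\subset\cU'$. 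In these coordinates $f$ has the form $f(u,v)=(f_1(u,v),f_2(u,v))$ with $f_2(u,0)=0$ for all $u\in V$ (this is exactly the condition $f(X)\subset X'$). Using the charts $\varphi_\alpha$ of Proposition \ref{dncatlas}, the map $\mathscr{D}(f)$ becomes the map $F:\Omega_V^U\to\Omega_{V'}^{U'}$ defined for $t\neq 0$ by
\[
F(u,v,t)=\bigl(f_1(u,tv),\,t^{-1}f_2(u,tv),\,t\bigr),
\]
and for $t=0$ by $F(u,v,0)=(f_1(u,0),\,\partial_vf_2(u,0)\cdot v,\,0)$, where $\partial_vf_2(u,0)$ is exactly the matrix representing $d_Nf_u$ in these slice coordinates.

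The heart of the proof is to show that $F$ is $C^\infty$ across $t=0$. The first and third components are obviously smooth, so only the second needs care. Since $f_2(u,0)=0$, the fundamental theorem of calculus yields
\[
f_2(u,tv)=\int_0^1 \partial_vf_2(u,stv)\cdot tv\,ds = t\cdot\int_0^1 \partial_vf_2(u,stv)\cdot v\,ds,
\]
so that for $t\neq 0$
\[
t^{-1}f_2(u,tv)=\int_0^1 \partial_vf_2(u,stv)\cdot v\,ds,
\]
and the right-hand side extends to a smooth function of $(u,v,t)\in\Omega_V^U$ whose value at $t=0$ is $\partial_vf_2(u,0)\cdot v$, matching the prescribed value of $F$. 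This is the standard ``deformation'' trick, and it shows that $F$ is smooth on all of $\Omega_V^U$.

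The main (and really only) obstacle is precisely the smoothness across $t=0$ that the Taylor computation above handles; once local smoothness is established, covering $\mathscr{D}_X^M$ by $X$-slices and $\mathscr{D}_{X'}^{M'}$ by $X'$-slices and invoking Proposition \ref{dncatlas} gives smoothness globally. Functoriality then follows by combining the set-theoretic check with this regularity statement, yielding the functor $\mathscr{D}:\mathscr{C}_2^\infty\to\mathscr{C}^\infty$ as claimed.
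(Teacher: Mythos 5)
Your proof is correct and follows the standard route: the paper itself does not prove this proposition but simply cites proposition 3.4 of \cite{Ca2}, and the argument given there is exactly your reduction to the $\Omega_V^U$-charts followed by the Hadamard/Taylor integral trick to absorb the factor $t^{-1}$, plus the chain rule for $d_Nf$ for functoriality. The only point worth making explicit is that the integral formula $t^{-1}f_2(u,tv)=\int_0^1 \partial_vf_2(u,stv)\cdot v\,ds$ requires the segment $\{(u,stv): s\in[0,1]\}$ to stay in $U$, so you should choose (or shrink to) slice charts with $U$ fiberwise star-shaped around $V$, e.g.\ of product form $V\times(-\varepsilon,\varepsilon)^q$; since smoothness is local this costs nothing.
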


In \cite{Pat2000}, Paterson properly defined the notion of continuous family groupoids, for which all above considerations and concepts apply immediately. For more details on this the reader might consult Paterson original paper or \cite{LMNpdo} where the authors also developed the appropriate pseudodifferential calculus. In particular, we can define the Connes tangent groupoid in this context:

\begin{definition}\label{defgtan}[Tangent groupoid of a continuous family groupoid]
Let $\gr \rightrightarrows \go $ be a continuous family groupoid. $\it{The\, tangent\,
groupoid}$ associated to $\gr$ is the groupoid that has 
\[
\mathscr{D}_{\go}^{\gr} = \Nb_{\go}^{\gr} \times \{0\}\bigsqcup \gr\times \RR^*
\]
 as the set of arrows and  $\go \times \RR$ as the units, with:
 \begin{enumerate}
\item  $s^T(x,\eta ,0) =(x,0)$ and $r^T(x,\eta ,0) =(x,0)$ at $t=0$.
\item   $s^T(\gamma,t) =(s(\gamma),t)$ and $r^T(\gamma,t)
  =(r(\gamma),t)$ at $t\neq0$.
\item  The product is given by
  $m^T((x,\eta,0),(x,\xi,0))=(x,\eta +\xi ,0)$ and  $m^T((\gamma,t), 
  (\beta ,t))= (m(\gamma,\beta) , t)$ if $t\neq 0 $ and 
if $r(\beta)=s(\gamma)$.
\item The unit map $u^T:\go \rightarrow \gr^T$ is given by
 $u^T(x,0)=(x,0)$ and $u^T(x,t)=(u(x),t)$ for $t\neq 0$.
\end{enumerate}
We denote $\gr^{T}= \mathscr{D}_{\go}^{\gr}$ and $A\gr =\Nb_{\go}^{\gr}  $ the normal vector bundle over $\gr^{(0)}$ associated to the embedding of $\go$ into $\gr$ as units. Then we have a family of continuous family groupoids parametrized by $\RR$, which itself is a continuous family groupoid
\[
\gr^T = A\gr \times \{0\}  \bigsqcup \gr\times \RR^* \rightrightarrows \go\times \RR.
\]
\end{definition} 

The vector bundle $A\gr =\Nb_{\go}^{\gr} \to \go $ is called the Lie algebroid of $\gr$ (In this paper we will not use its Lie algebroid structure so we will not enter into these details). It can be identified with the vector bundle 
\begin{equation}\label{AGds}
A\gr=Ker\, ds|_{\go},
\end{equation}
via the short splitting exact sequence of vector bundles over $\go$:
$$0\to T\go\to T_{\go}\gr\to A\gr \to 0.$$
In practice it is useful to use such identification to use the explicit fiber decomposition
$$A\gr=\bigsqcup_{x\in\go}T_x\gr_x.$$

As a consequence of the functoriality of the deformation to the normal cone,
one can show that the tangent groupoid is in fact a continuous family groupoid compatible with the continuous family groupoid structures of $\gr$ and $A\gr$ (considered as a continuous family groupoid with its vector bundle structure). 

Before enouncing the following proposition, let us recall the following fact: Given a groupoid (strict) morphism
$$\gr\stackrel{f}{\longrightarrow} \hr$$
there is an induced continuous vector bundle morphism
\begin{equation}\label{AGmorphism}
A(f):A\gr\to A\hr
\end{equation}
given by derivation in the normal direction. Indeed, using the intrinsic definitions of $A\gr=\Nb_{\go}^{\gr}$ and $A\hr=\Nb_{\go}^{\gr}$ we have that $df$ passes to the quotient since $f(\go)\subset \ho$. In terms of the identification (\ref{AGds}), $A(f)$ can be defined by
$$A(f)(x,\xi)=(f(x),d_xf(\xi)).$$
for $(x,\xi)\in T_x\gr_x$
\begin{proposition}\label{tanproperaction}
Let $\gr$ be a continuous family groupoid together with an injective continuous family groupoid morphism $\gr \stackrel{h}{\longrightarrow} \RR^N$. Consider also the induced infinitesimal continuous family groupoid morphism $A(\gr) \stackrel{A(h)}{\longrightarrow} \RR^N$. Assume that $A(h)$ is also injective and that both semi-direct groupoids, $\gr_h$ and $A(\gr)_{A(h)}$ are free and proper. Then the induced morphism 
$\gr^{tan} \stackrel{h^T}{\longrightarrow} \RR^N$ gives a free proper semi-direct groupoid as well.
\end{proposition}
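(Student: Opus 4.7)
The plan is to verify freeness and the closedness of the range-source map for $(\gr^{tan})_{h^T}$, following the criterion \cite[Proposition~2.14(ii)]{Tu04} as in the previous two lemmas. Freeness is immediate from the hypotheses: the isotropy at a unit $(x,t,X)$ consists of $(\gamma,t,X)$ with $\gamma\in(\gr^{tan})^{(x,t)}_{(x,t)}$ and $h^T(\gamma,t)=0$. For $t>0$ one has $h^T(\gamma,t)=h(\gamma)/t$, so the injectivity of $h$ (in the sense $h^{-1}(0)=\go$) forces $\gamma$ to be a unit; for $t=0$ one has $h^T(x,\xi)=A(h)(x,\xi)$, and the injectivity of $A(h)$ forces $\xi=0$. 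Condition (B) of loc.\ cit.\ follows trivially; all the work lies in proving condition (A).

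For closedness of $(r,s)$, suppose $((\gamma_n,t_n,X_n))$ is a sequence whose image converges to $((y,t,Y),(x,t,X))$. I split on $t$. When $t>0$, I would use the scaling $(\gamma,s,Z)\mapsto((\gamma,sZ),s)$, which, by a direct check analogous to the isomorphism $\theta$ of Section~\ref{CTsection}, defines a continuous family groupoid isomorphism between $(\gr^{tan})_{h^T}|_{t>0}$ and the product $\gr_h\times(0,1]$. Since $\gr_h$ is free and proper by hypothesis, so is this product, and a convergent subsequence with the right limit exists. When $t_n=0$ along a subsequence, the conclusion reduces to the assumed properness of $A(\gr)_{A(h)}$.

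The crux is the crossover case $t_n>0$, $t_n\to 0$, where one must produce a limit in $A(\gr)\times\{0\}$. Here $y=x$ since source and target coincide on $A(\gr)\times\{0\}$. The first observation is that the hypothesis that $\gr_h$ is proper can be applied to the auxiliary sequence $((\gamma_n,t_n X_n))$ in $\gr_h$: since $X_n\to Y$ and $t_n\to 0$, $t_n X_n\to 0$, and since $h(\gamma_n)=t_n\cdot h^T(\gamma_n,t_n)\to 0$, both the source $(s(\gamma_n),t_n X_n+h(\gamma_n))$ and the target $(r(\gamma_n),t_n X_n)$ converge to $(x,0)$. Properness of $\gr_h$ gives a convergent subsequence $(\gamma_n,t_n X_n)\to(\gamma_\infty,0)$; the limit arrow satisfies $s(\gamma_\infty)=r(\gamma_\infty)=x$ and $h(\gamma_\infty)=0$, hence $\gamma_\infty=x$ by the injectivity of $h$. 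This upgrades the information to $\gamma_n\to x$ in $\gr$, which puts $\gamma_n$ eventually in any $\go$-slice chart $\phi:\cU\to U\subset\RR^p\times\RR^q$ around $x$. Choosing such a chart in which $s$ is the first projection (available because $\gr$ is a continuous family groupoid and $s$ is a submersion) and writing $\phi(\gamma_n)=(a_n,b_n)$, one gets $a_n\to\phi_1(x)$ and $b_n\to 0$.

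The main obstacle and last step is then to bound $b_n/t_n$, for by the description of the DNC charts in Appendix~\ref{dncappendix} a convergent subsequence $b_n/t_n\to\xi$ yields $(\gamma_n,t_n)\to(x,\xi,0)$ in $\gr^{tan}$, and continuity of $h^T$ then forces $A(h)(x,\xi)=X-Y$ automatically. To bound $b_n/t_n$ I plan a bootstrap argument using injectivity of $A(h)$. Since $h$ vanishes on units, in local coordinates
\[
h(\phi^{-1}(a,b))=A(h)|_a(b)+R(a,b),\qquad \|R(a,b)\|\le C\|b\|^2
\]
on a compact neighborhood of $\phi_1(x)$. Injectivity of the linear map $A(h)|_a:\RR^q\to\RR^N$ together with continuous dependence on $a$ produces $c>0$ with $\|A(h)|_a(v)\|\ge c\|v\|$ near $\phi_1(x)$, and hence
\[
c\,\|b_n/t_n\|\le\|h(\gamma_n)/t_n\|+C\,\|b_n\|\cdot\|b_n/t_n\|.
\]
Since $h(\gamma_n)/t_n$ converges (hence is bounded) and $\|b_n\|\to 0$, rearranging yields $(c-C\|b_n\|)\|b_n/t_n\|$ bounded, whence $\|b_n/t_n\|$ itself is bounded, completing the extraction and thus the proof.
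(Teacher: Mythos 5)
Your proof is correct and follows essentially the same route as the paper's: Tu's properness criterion, freeness from injectivity of $h$ and $A(h)$, a case split on the deformation parameter, and properness of $\gr_h$ applied to an auxiliary sequence to force $\gamma_n\to x$ in the crossover case $t_n>0$, $t_n\to 0$. The only variations are that you handle $t>0$ via the rescaling isomorphism with $\gr_h\times(0,1]$ instead of a direct sequence argument, and your coordinate bootstrap bounding $b_n/t_n$ spells out a step the paper's case $(b_2)$ leaves terse, namely why a $\xi$ with $A(h)(x,\xi)=Y-X$ exists and why $(\gamma_{n_k},t_{n_k})$ actually converges to $(x,\xi,0)$ in the deformation-to-the-normal-cone topology.
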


\begin{proof}
We will use again properness caracterization $(ii)$ of proposition 2.14 in \cite{Tu04}. In particular we have to verify only property (A)  (of Section \ref{sectionembedding} above), that is, in our case we have to check that the map
\begin{equation}
\xymatrix{
\gr^{tan}\times \RR^N\ar[rr]^-{(r,s)}&&(\go\times [0,1]\times \RR^N)\times (\go\times [0,1]\times \RR^N)
}
\end{equation}
given by 
\begin{equation}\label{(r,s)tan}
\begin{cases}
((\gamma,\epsilon),X)\mapsto ((t(\gamma),\epsilon, X),(s(\gamma),\epsilon,X+\frac{h(\gamma)}{\epsilon}))
\\
((x,\xi),X)\mapsto ((x,0, X),(x,0,X+A(h)(x,\xi)))
\end{cases}
\end{equation}
is closed.

Let $(A_n)_n:=(\tilde{\gamma_n},X_n)_n$ a sequence in $\gr^{tan}\times \RR^N$ such that 
\begin{equation}\label{convergencegammatan}
lim_{n\to \infty}(r,s)(\tilde{\gamma_n},X_n)=P
\end{equation}
with $P$ a point in $(\go\times [0,1]\times \RR^N)\times (\go\times [0,1]\times \RR^N)$. It is enough to justify that there is a subsequence of $(A_n)_n$ converging to an antecedent of $P$: 
The point $P$ is of the form $((x,\epsilon_1,X),(y,\epsilon_2,Y))$. The first consequence of (\ref{convergencegammatan}) is that $\epsilon_1=\epsilon_2$, hence $P=((x,\epsilon,X),(y,\epsilon,Y))$

We will separate the analysis in two cases:
\begin{itemize}
\item[$(a)$] The case $\epsilon \neq 0$: By the explicit form of (\ref{(r,s)tan}), we can assume (or there is a subsequence) that $(A_n)_n\subset \gr\times (0,1]\times \RR^N$, {i.e.}, that the elements os the sequence are of the form $A_n=(\gamma_n,\epsilon_n,X_n)$ with $\epsilon\neq 0$. But then we have the following convergences: $t(\gamma_n)\to x$, $\epsilon_n\to \epsilon$, $X_n\to X$, $s(\gamma_n)\to y$ and  $X_n+\frac{h(\gamma_n)}{\epsilon_n}$.

In particular we obtain that $h(\gamma_n)\to \epsilon\cdot (Y-X)$, and since $\gr_h$ is proper we have that there is a subsequence of $(\gamma_{n_k})_k$ of $(\gamma_n)_n$ and a $\gamma\in \gr$ such that $\gamma_{n_k}\to \gamma$ with $t(\gamma)=x$, $s(\gamma)=y$ and $h(\gamma)=\epsilon\cdot (Y-X)$. In particular, letting $A=(\gamma,\epsilon,X)$ we have that 
$A_{n_k}\to A$  and $(r,s)(A)=P$.

\item[$(b)$] The case $\epsilon=0$: In this case we have two subcases: 
\begin{itemize}
\item[$(b_1)$] There is a subsequence of $(A_n)_n$ entirely contained in 
$A(\gr)\times \RR^N$. In this case we might assume that $A_n=((x_n,\xi_n),X_n)\in A_{x_n}(\gr)\times \RR^N$. Then, (\ref{convergencegammatan}) implies that $x_n\to x=y$, $X_n\to X$ and $X_n+A(h)(x_n,\xi_n)\to Y$. In particular, $A(h)(x_n,\xi_n)\to Y-X$ and since $A(\gr)_{A(h)}$ is proper we have that there is a subsequence $(x_{n_k},\xi_{n_k})$ converging in $A(\gr)$ to an element $(x,\xi)\in A_x(\gr)$. Then letting $A=((x,\xi),X)$ we have that 
$A_{n_k}\to A$ and $(r,s)(A)=P$.
\item[$(b_2)$] There is a subsequence of $(A_n)_n$ entirely contained in 
$\gr \times (0,1] \times \RR^N$. In this case we might assume that $A_n=(\gamma_n,\epsilon,X_n)\in \gr\times (0,1]\times \RR^N$. Then, (\ref{convergencegammatan}) implies that $t(\gamma_n)\to x$, $\epsilon_n\to 0$, $X_n\to X$, $s(\gamma_n)\to y$ and $X_n+\frac{h(\gamma_n)}{\epsilon_n}\to Y$. This implies $\frac{h(\gamma_n)}{\epsilon_n}\to Y-X$ in $\RR^N$ or in other words $(h(\gamma_n),\epsilon_n)\to (Y-X,0)$ in $(\RR^N)^{tan}$. In particular we have also that $h(\gamma_n)\to 0$ in $\RR^N$ and since $\gr_h$ is proper and $h$ is injective, we deduce that $x=y$ and that there is a subsequence $(\gamma_{n_k})_k$ of $(\gamma_n)_n$ such that $\gamma_{n_k}\to x$. Now, from the injectivity of $A(h)$ and the fact that $\frac{h(\gamma_n)}{\epsilon_n}\to Y-X$ we deduce that there is an unique $\xi \in A_x(\gr)$ such that $A(h)(x,\xi)=Y-X$. Finally, letting $A=((x,\xi), X)$ we have that $A_{n_k}\to A$ and $(r,s)(A)=P$.
\end{itemize}
\end{itemize}

\end{proof}

\bibliographystyle{plain} 
\bibliography{clmETA6} 

\end{document}